\DeclareMathOperator{\lcm}{lcm}
\newtheorem{theorem}{Theorem}[section]
\newtheorem{corollary}{Corollary}[theorem]
\newtheorem{lemma}[theorem]{Lemma}
\newtheorem{definition}{Definition}
\newtheorem{conjecture}{Conjecture}
\newtheorem{example}[theorem]{Example}
\newtheorem{proposition}[theorem]{Proposition}
\newcommand{\undb}{\underbrace}
\DeclareMathOperator{\per}{Per}
\DeclareMathOperator{\preper}{PrePer}
\newcommand{\Z}{\mathbb{Z}}
\newcommand{\N}{\mathbb{N}}
\newcommand{\bigO}{\mathcal{O}}
\newcommand{\ang}[1]{\left\langle #1\right\rangle}
\newcommand{\ceil}[1]{\left\lceil #1 \right\rceil}
\newcommand{\floor}[1]{\left\lfloor #1 \right\rfloor}
\newcommand{\abs}[1]{\left\lvert #1 \right\rvert}
\newcommand{\nth}{^{\text{th}}}
\newcommand{\inv}{^{\text{-}1}}
\newcommand{\biject}{\mathrel{\mathrlap{\hookrightarrow}}\mathrel{\mkern2mu\twoheadrightarrow}}
\newcommand{\ncount}{N}
\newcommand{\idk}{\beta}
\title{Superpolynomial period lengths of the winning positions in the subtraction game}
\author[1,2]{Istv\'an Mikl\'os} 
\author[3,4]{Logan Post} 
\affil[$1$]{R\'enyi institute\\ 1053 Budapest, Re\'altanoda u. 13-15\\ Hungary}
\affil[$2$]{SZTAKI\\ 1111 Budapest, L\'agym\'anyosi u. 11\\ Hungary}
\affil[$3$]{Budapest Semesters in Mathematics\\ 1071 Budapest, Bethlen G. t\'er 2\\ Hungary}
\affil[$4$]{Georgia Institute of Technology\\ 686 Cherry St NW, Atlanta, GA \\ USA}
\date{}
\begin{document}
\maketitle

\begin{abstract}
    Given a finite set of positive integers, $A$, and starting with a heap of $n$ chips, Alice and Bob alternate turns and on each turn a player chooses $x\in A$ with $x$ smaller or equal than the  current number of chips and subtract $x$ chips from the heap. The game terminates when the current number of chips becomes smaller than $\min\{A\}$ and no moves are possible. The player who makes the last move is the winner.  We can define $w^A(n)$ to be $1$ if Alice has a winning strategy with a starting heap of $n$ chips and $0$ if Bob has a winning strategy. By the Pigeonhole Principle, $w^A(n)$ becomes periodic, and it is easy to see that the period length is at most an exponential function of $\max\{A\}$. The typical period length is a linear function of $\max\{A\}$, and it is a long time open question if exponential period length is possible.

    We consider a slight modification of this game by introducing an initial seed $S$ that tells for the few initial numbers of chips whether the current or the opposite player is the winner. In this paper we show that the initial seed cannot change the period length of $w^A(n)$ if the size of $A$ is $1$ or $2$, but it can change the period length with $|A|\ge 3$. Further, we exhibit a class of sets $A$ of size $3$ and corresponding initial seeds such that the period length becomes a superpolynomial function of $\max\{A\}$.
\end{abstract}

\section{Introduction}\label{sec:introduction}

Game Theory is the theory of interactive situations or games among rational decision-makers or players in which the decisions of each player are contingent on the decisions of the others. Combinatorial Game Theory considers games with perfect information and without elements of chance. That is, at all times during the game, players have perfect information about the state of the game, and further, the moves in the game are entirely decided by the players, there is no elements of chance once the game has begun. We further require that a combinatorial game must end with a clear winner.

An example of a two-player combinatorial game is the subtraction game.
For a finite set $A\subset\N_+$, the \emph{$A$-subtraction game} is a two-player combinatorial game which proceeds as follows. We begin with heap of $n$-chips. Players Alice and Bob alternate turns, and on each turn a player chooses $x\in A$ with $x\leq n$, and subtracts $x$ chips from the heap, leaving $n-x$ remaining. The game terminates when $n<\min(A)$ and thus no moves are possible. The player who makes the last move is the winner. In general we consider a fixed $A$ and ask for which values of $n$ each player has a winning strategy. Note that when Alice makes a move $x$, Bob and Alice switch roles and we reduce to the $n-x$ game.

The subtraction game is also in the large class of combinatorial games called \emph{impartial games}. An impartial game is a combinatorial game in which the allowable moves depend only on the position and not on which of the two players is currently moving, and where the payoffs are symmetric. It is also in \emph{normal mode}, meaning that the winner is who can make the last possible move.
The Sprague-Grundy theorem \cite{Sprague,Grundy} says that any impartial game in normal mode is equivalent with a Nim game, which is the disjunctive sum of $\N_+$-subtraction games. Despite this reduction, we know little about the patterns of the winning positions of the subtraction game.

For any finite set $A\subset\N_+$, a dynamic programming recursion can compute which player has the winning strategy starting with a pile of size $n$. Simple reasoning by Pigeonhole Principle shows that the pattern of winning positions will eventually become periodic as $n$ takes all possible positive integers, and the period length cannot be longer than $2^{\max(A)}$. It is a long time open question if exponential period lengths exist in the substraction game. Alth\"ofer and B\"ultermann conjectured that superpolynomial period lengths might exist if $|A| \ge 5$ \cite{superlinear}.

For some $A$, the subtraction game has a pre-period in the winning positions before becomeing periodic, that is, a pattern of winning positions for small $n$ that is never repeated. We also know little about for which $A$ the subtraction game has a pre-period and for which $A$ it is \emph{purely periodic}, that is, has no pre-period.

In this paper, we generalize the subtraction game by modifying who is the winner for some small $n$. We call this initial pattern a \emph{seed}. We give a complete analysis of subtraction games with seeds for $|A| =1$ and $2$. We prove that for all seeds the game has no pre-period if $|A| = 1$ or $2$. For $A =\{a\}$, the period length is $2a$. For $A =\{a,b\}$, the period length can be any divisor of $a+b$, with a few exceptions. If $a$ and $b$ are relatively prime then the period can be any divisor except $1$, $4$, and $6$. We also compute the number of possible distinct period lengths over all seeds.
When $|A| =3$, then there might be pre-periods. We give characterisations of period and pre-period lengths for a large class of possible sets $A$. Finally, we show that superpolynomial period lengths exist already when $|A| =3$.

%
%

\section{Preliminaries}\label{sec:preliminaries}



\begin{definition} Let $w^A(n)$ be the winning indicator function of $A$, so $w^A(n)=1$ if Alice (the first player) has a winning strategy and $w^A(n)=0$ if Bob has a winning strategy.
\end{definition}
For brevity, we may use $w(n)$ to refer to $w^A(n)$. By definition, $w$ satisfies the recurrence relation
\begin{equation}\label{eq:recurrence}
w(n)=\begin{cases}
1&w(n-x)=0\text{ for some }x\in A\\
0&w(n-x)=1\text{ for all }x\in A
\end{cases}=1-\min\{w(n-x)\mid x\in A\}.
\end{equation}
This follows from the observation that for a particular $n$, Alice is in a winning position if she can subtract some $x$ to give Bob a losing position. Otherwise, she will certainly move to a winning position for Bob, and he will win. We can then also describe $w^A$ as the lexicographically least sequence in $n$ such that for all $x\in A$, $w(n)=0\implies w(n+x)=1$.

Note that it is natural to define $w(0):=0$, because if a player has previously made a move to $0$, then the next player will lose. Therefore, to satisfy recurrence relation \eqref{eq:recurrence} we shall define $w(n):=1$ for all $n<0$.

To describe an entire sequence $\{w^A(n)\}_{n=0}^\infty$ of winning positions, abbreviated as $\{w^A\}$, we use exponents to denote repeated values. A noted example in \cite[p. 86]{winningways1} is the set $\{2,4,7\}$. We find that $\{w^{\{2,4,7\}}\}= 0,0,1,1,1,1,0,1,1,0,\ldots$, which can be abbreviated to $\{w^{\{2,4,7\}}\}=0^21^401^20\ldots =0^21^2(1^20)^\infty$. In this example we have $w(0)=0$, $w(1)=0$, and $w(2)=1$. This follows from the rule that if $n<2$, Alice is unable to move, but at $n=2$, Alice may subtract 2 chips and win the game. Similarly $w(6)=0$ because for any move Alice makes, Bob can respond with a winning move. In general, if we present a prefix of $\{w^A\}$ of length $\ell$, we use ``$\underset{\ell}{\ldots}$" to indicate the continuation of the sequence and clarify the prefix's length for the reader. For example, $\{w^{\{2,4,7\}}\}=0^21^4\underset{6}{\ldots}$ may indicate that the first $6$ values of $w$ are given and the rest are not yet derived. Throughout the paper, we refer to $\alpha=\max(A)$. The following example is an easy generalization of one in \cite[p. 103]{winningways1}.

\begin{example}\label{ex:1tok}
Suppose $A=\{1,2,\ldots,\alpha\}$. Then $w^A(n)=\begin{cases}
0&(\alpha+1)\mid n\\1&\text{otherwise}\end{cases}$, so $\{w^A\}=\big(\,01^\alpha\,\big)^\infty$.
\end{example}
\begin{proof}
Suppose $n=k(\alpha+1)$. We claim Bob has a winning strategy. If Alice subtracts $x$, then Bob can subtract $\alpha+1-x$, reducing to $(k-1)(\alpha+1)$ chips. Bob can repeat this until there are $(0)(\alpha+1)$ chips, winning the game. Suppose $n=k(\alpha+1)+y$. Then Alice has a winning strategy. She may $y$ chips, reducing the game to $k(\alpha+1)$, then play as Bob would by countering each of his moves $x$ with $\alpha+1-x$.
\end{proof}

We define a notation for repeated concatenation of strings. By analogy to addition, for strings $w_1,w_2,\ldots w_k$, let
\newcommand{\conc}[2]{\underset{#1} {\overset{#2}{\boldsymbol{\LARGE||}}}}
\[
\sum_{i=1}^{k}w_i=w_1\circ w_2\circ \ldots \circ w_k.
\]
This notation satisfies the equality $\abs{\sum_{i=1}^{k}w_i}=\sum_{i=1}^{k}\abs{w_i}$. Recall that sequence concatenation is not a commutative operation, although the usual summation of numbers is one. When we use a summation symbol for a concatenation of strings, we will always use an index that defines the order of the concatenation.


\begin{definition}\label{def:period} A sequence $\{w^A(n)\}$ is \emph{periodic} over $p$ if there is some $N\in \N$ such that for all $n\geq N$, $w^A(n)=w^A(n+p)$. We say the \emph{period} of $w^A(n)$ is the least such $p$ and the \emph{preperiod} is the least such $N$. We denote these by $\per(A)$ and $\preper(A)$ respectively.
\end{definition}
In Example \ref{ex:1tok} we have $\per(\{1,\ldots,\alpha\})=\alpha+1$ and $\preper(\{1,\ldots, \alpha\})=0$. We also observe that $\per(\{2,4,7\})=3$ and $\preper(\{2,4,7\}=4$, because for all $n\geq 4$ it holds that $w(n)=w(n+3)$.

\begin{lemma}\label{lem:periodic} For any finite set $A\subset \N_+$, $\{w^A(n)\}$ is periodic.
\end{lemma}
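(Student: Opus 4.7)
The plan is to invoke the pigeonhole principle on $\alpha$-windows of the sequence, where $\alpha = \max(A)$. The recurrence \eqref{eq:recurrence} says that $w(n)$ is a fixed function of the previous $\alpha$ values $w(n-1), w(n-2), \ldots, w(n-\alpha)$, since every $x \in A$ lies in $\{1, \ldots, \alpha\}$. Consequently, the entire future of the sequence from position $n$ onward is determined by the $\alpha$-tuple $W_n := (w(n), w(n+1), \ldots, w(n+\alpha-1)) \in \{0,1\}^\alpha$.

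First I would form the sequence of tuples $W_0, W_1, W_2, \ldots$ Since each $W_n$ lives in a set of size $2^\alpha$, by the pigeonhole principle there must exist indices $0 \leq N_1 < N_2 \leq 2^\alpha$ with $W_{N_1} = W_{N_2}$. Set $p := N_2 - N_1$. Next I would show by strong induction on $n \geq N_1$ that $w(n) = w(n+p)$. For $n \in \{N_1, N_1+1, \ldots, N_1+\alpha-1\}$ this is exactly the assumption $W_{N_1} = W_{N_2}$. For $n \geq N_1 + \alpha$, the recurrence expresses $w(n)$ and $w(n+p)$ as the same function of $(w(n-1), \ldots, w(n-\alpha))$ and $(w(n+p-1), \ldots, w(n+p-\alpha))$ respectively, and the two input tuples agree by the inductive hypothesis. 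This establishes periodicity with period dividing $p$, so in particular $\{w^A\}$ is eventually periodic in the sense of Definition \ref{def:period}.

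There is no real obstacle; this is a standard finite-state argument, and the bound $\per(A) \leq p \leq 2^\alpha$ obtained in passing already gives the exponential upper bound alluded to in the introduction. The only subtlety to mention is that the degenerate convention $w(n) := 1$ for $n < 0$ is harmless: once $n \geq \alpha$, the recurrence reads off only nonnegative indices, so the argument above applies verbatim from $n = \alpha$ onward, and the first $\alpha$ values are absorbed into the preperiod.
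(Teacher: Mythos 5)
Your proof is correct and follows essentially the same route as the paper's: the paper packages the $\alpha$-window as the ``vector of previous values'' $\vec v(n)$ and iterates the update map $\mathcal F$, while you use the forward window $W_n$ and a strong induction, but both are the identical finite-state pigeonhole argument yielding a repeated state within $2^\alpha$ steps and hence periodicity.
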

\begin{proof} To prove this fact, we define a new tool called the vector of previous values. Given $A\subset \N_+$, let
\begin{equation}\label{eq:prevvalues}
\vec v^A(n):= \ang{w(n-\alpha),\ldots,w(n-2),w(n-1)}=\ang{v_1,\ldots,v_\alpha}\in \Z_2^\alpha.
\end{equation}
As shown above, $\vec v(n)$ will be an element of $\Z_2^\alpha$. Next, we use the recurrence relation to define a function $\mathcal F:\Z_2^\alpha\to\Z_2^\alpha$.
\begin{equation}\label{eq:vectorfunction}
\mathcal{F}(\vec v):=\ang{v_2,v_3,\ldots,v_{\alpha-1},1-\min\{v_{\alpha-x}\mid x\in A\}}\end{equation}
Thus by Recurrence \ref{eq:recurrence}, we have $\vec v(n+1)=\mathcal F\big(\vec v(n)\big)$. Note that $\Z_2^\alpha$ has $2^\alpha$ elements, so by the Pigeonhole Principle there must be distinct integers $0\leq N<M\leq 2^\alpha$ such that $\vec v(N)=\vec v(M)$. Therefore for all $n\geq N$, we have the equality 
\[\vec v(n)=\mathcal F^{n-N}\big(\vec v(N)\big)=F^{n-N}\left(\vec v(M)\right)=\vec v(n+M-N).\]
Therefore a single repeated vector guarantees periodicity of length $M-N$. 
\end{proof}
Indeed this Lemma often fails for infinite set games.
\begin{proposition}\label{prop:notperiodic}
For some $k\geq 2$, suppose $A=\{n^k\mid n\in \N\}$. Then the sequence $w^A$ is not periodic.\footnote{For $k=2$, the losing positions of this sequence are in the Online Encyclopedia of Integer Sequences \cite[A030193]{oeis}} 
\end{proposition}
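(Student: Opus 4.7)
My plan is to argue by contradiction. Suppose $w := w^A$ is eventually periodic with period $p$ and preperiod at most $N$. Let $Z := \{n \in \N : w(n) = 0\}$. By the recurrence \eqref{eq:recurrence}, if $n \in Z$ then $w(n-x) = 1$ for every $x \in A$ with $x \leq n$; equivalently, the difference of any two elements of $Z$ is never a positive $k$-th power.

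The first step is to show that $Z$ is infinite. If not, set $M := \max Z$; then $w(n) = 1$ for every $n > M$, which by the recurrence forces each such $n$ to lie in the set $\bigcup_{z \in Z}(z + A)$. Since $|A \cap [1,n]| = \lfloor n^{1/k}\rfloor$ grows sublinearly (because $k \geq 2$) and $Z$ is finite, this union has natural density $0$, contradicting the fact that it contains every integer $n > M$.

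Next, choose any $n_0 \in Z$ with $n_0 \geq N$, which exists by the previous step. By the assumed periodicity, $n_0 + tp \in Z$ for every integer $t \geq 0$. Combined with the first observation, the differences $(n_0 + tp) - n_0 = tp$ must never be a positive $k$-th power. But taking $t := p^{k-1}$ gives $tp = p^k$, which \emph{is} a $k$-th power and hence lies in $A$. This contradiction completes the proof.

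I expect the only delicate point to be the infinitude of $Z$; the density argument above is the natural way to handle it, and everything else is then immediate, since any infinite arithmetic progression in $Z$ contains a pair of elements whose difference is a perfect $k$-th power (in fact, we can even take the gap to be $p^k$ itself).
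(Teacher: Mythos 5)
Your proposal is correct and follows essentially the same two-step strategy as the paper: first show the losing positions are infinite because the winning positions are covered by the sparse set $Z+A$, then use periodicity to produce two losing positions differing by $p^k\in A$. The only cosmetic difference is that you phrase the first step via natural density while the paper writes out the explicit counting inequality $\ell+2\ell^2\geq(2\ell)^k$.
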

\begin{proof}
We first show there must be infinitely many losing positions by contradiction. Suppose there are $\ell$ losing positions. Among all integers $\leq (2\ell)^k$, there are $2\ell$ elements of $A$. Because each winning position can be expressed as a losing position plus some $x\in A$, there can be combinatorially at most $2\ell^2$ distinct winning positions $\leq (2\ell)^k$. This implies $\ell+2\ell^2\geq (2\ell)^k$, which contradicts finiteness of losing positions. Thus $\{w^A\}$ has infinitely many zeros. If we suppose $w$ is periodic after some $N$ with period $p$, then there is some $n>N$ with $w(n)=0$. This implies that after $p^{k-1}$ periods, we will have $w(n+p^k)=0$, contradicting $p^k\in A$.
\end{proof}

The proof of Lemma \ref{lem:periodic} gives the result that for any finite $A$, $\preper(A)+\per(A)\leq 2^\alpha$. We can get a slightly tighter bound for dense sets, but no less than exponential in $\alpha$.
\begin{theorem}\label{thm:periodbound} 
For $A=\{a_1,a_2,\ldots,\alpha',\alpha\}$, let $\beta=\min(\alpha,a_1+\alpha')$. Then 
\begin{equation}\label{eq:universalperiodbound}
\preper(A)+\per(A)\leq (\beta+1)2^{\alpha-|A|}.
\end{equation}
\end{theorem}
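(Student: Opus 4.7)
The plan is to refine the Pigeonhole argument of Lemma~\ref{lem:periodic}: instead of allowing all $2^\alpha$ possible state vectors, I bound the number of vectors $\vec v\in\Z_2^\alpha$ that can actually arise as some $\vec v(n)$, which upper-bounds $\preper(A)+\per(A)$ since the trajectory under $\mathcal F$ is deterministic. The basic tool is the constraint extracted directly from the recurrence~\eqref{eq:recurrence}: in any reachable vector, no two zero entries lie at a distance that belongs to $A$ (past the trivial boundary region where $w$ is $0$ or $1$ by convention).

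I will classify reachable vectors by their \emph{tail length} $t(\vec v)$, the largest $t$ such that $v_{\alpha-t+1}=\cdots=v_\alpha=1$. The key claim is that $t(\vec v)\le\beta$ for every reachable $\vec v$, apart from the initial $\vec v(0)=(1,\ldots,1)$, which is handled separately. The bound $t\le\alpha$ is immediate, since a vector of all ones forces the next value of $w$ to be $0$. For the sharper bound $t\le a_1+\alpha'$ (relevant when $a_1+\alpha'<\alpha$), I argue by contradiction: each trailing one $w(n-j)$ for $j=1,\ldots,t$ requires some $x_j\in A$ with $w(n-j-x_j)=0$ and $x_j\ge t-j+1$, so that the zero lies outside the tail. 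For $j=1,\ldots,t-\alpha'$, the only admissible choice is $x_j=\alpha$, forcing $w(n-\alpha-1),\ldots,w(n-\alpha-(t-\alpha'))$ to all vanish. These consecutive zeros contain pairs at every distance $1,\ldots,t-\alpha'-1$; if $t>a_1+\alpha'$, some pair lies at distance $a_1\in A$, violating the structural constraint.

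For each $t\in\{0,\ldots,\beta\}$ with $t<\alpha$, the entry $v_{\alpha-t}$ equals $0$, and applying the structural constraint at this zero pins $v_{\alpha-t-x}=1$ for every $x\in A\cap\{1,\ldots,\alpha-t-1\}$. Setting $h(t):=|A\cap\{\alpha-t,\ldots,\alpha\}|$, this fixes $|A|-h(t)$ positions in the pre-tail and leaves at most $2^{\alpha-t-1-|A|+h(t)}$ free configurations. Since $h(t)\le t+1$ (the interval $\{\alpha-t,\ldots,\alpha\}$ has only $t+1$ integers), summing over $t$ yields
\[
\sum_{t=0}^{\beta}2^{\alpha-t-1-|A|+h(t)} \;\le\; 2^{\alpha-|A|-1}\cdot 2(\beta+1) \;=\; (\beta+1)\,2^{\alpha-|A|},
\]
and the all-ones vector (needed only when $\beta<\alpha$) is absorbed by the slack coming from indices $t$ where $\alpha-t\notin A$, so that $h(t)<t+1$ strictly.

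The main technical obstacle will be the tail bound in the second step: carefully constructing the chain of forced zeros from a long run of trailing ones and verifying that their pairwise distances must include an element of $A$ as soon as $t>a_1+\alpha'$. The counting in the final steps is essentially bookkeeping, once the structural constraint and the correspondence between tail positions and the ``$A$-indices'' $\alpha-x+1$ for $x\in A$ are set up correctly.
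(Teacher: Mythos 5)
Your proof is correct, and while it rests on the same two structural facts as the paper's, it runs the count in a genuinely different way. The shared ingredients are: (a) a run of ones in $\{w^A\}$ has length at most $\beta$ (your tail bound; the paper derives a single forced zero at distance $a_1+\alpha$ and propagates it, whereas you derive a block of $t-\alpha'$ consecutive forced zeros and locate a forbidden pair at distance $a_1$ --- both arguments are valid), and (b) a zero in the window pins the entries at distances $x\in A$ below it to $1$. The difference is where the pigeonhole is applied. The paper looks only at the subsequence of zero positions $n_i$: the vector at each zero has $|A|$ pinned entries, hence at most $2^{\alpha-|A|}$ possible types, so a repeat occurs within the first $2^{\alpha-|A|}+1$ zeros, and the gap bound $n_i-n_{i-1}\le \beta+1$ converts that index into the position bound $(\beta+1)2^{\alpha-|A|}$. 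You instead bound the total number of reachable state vectors, stratified by tail length $t\le\beta$, with the refined pin count $|A|-h(t)$, and then use determinism of $\mathcal F$ to bound $\preper(A)+\per(A)$ by the number of reachable states. Your route costs extra bookkeeping exactly at the points you flagged: one must verify $h(t)\le t+1$, verify that $\vec v(0)=1^\alpha$ is the only exception to the tail bound (true in the no-seed setting, since $w(0)=0$ prevents any later tail from reaching into the conventional ones), and verify that when $\beta<\alpha$ there is genuine slack to absorb the all-ones vector. That last point deserves the one-line proof you omitted: if $\{\alpha-\beta,\ldots,\alpha\}\subseteq A$ with $\beta\ge 1$, then $\alpha-1\in A$, so $\alpha'=\alpha-1$ and $\beta=\min(\alpha,a_1+\alpha-1)=\alpha$; hence $\beta<\alpha$ guarantees some $t\le\beta$ with $\alpha-t\notin A$, and it also forces $|A|<\alpha$, so the slack $2^{\alpha-|A|-1}$ is indeed at least $1$. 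What your approach buys is slightly more information --- a bound on the number of distinct states ever visited, not merely on $\preper+\per$ --- and it avoids any reasoning about positions in the sequence; what the paper's buys is brevity, since counting only the zero-vectors sidesteps the $h(t)$ correction and the all-ones edge case entirely.
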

\begin{proof}
First, we show that there cannot be a string of ones longer than $\beta$ in $\{w^A\}$. If some $w(n)$ is preceded by a string of $\alpha$ ones, then $w(n-x)=1$ for all $x\in A$, so $w(n)=0$. Alternately, if $n$ is preceded by a string of $a_1+\alpha'$ ones, this implies that in particular $w(n-a_1)=1$, and $(n-a_1)$ is preceded by $\alpha'$ ones. This means that for all $a_i\in A$ \textit{except for} $\alpha$, we have $a_i\leq \alpha'$ and therefore $w(n-a_1-a_i)=1$. Because $w(n-a_1)=1$, by process of elimination we conclude that $w(n-a_1-\alpha)=0$. Therefore $w(n-\alpha)=1$. Thus $w(n-x)=1$ for all $x\in A$, so $w(n)=0$. Hence $\beta$ bounds the number of consecutive ones in $\{w^A\}$.

Now, let $n_i$ be the $i^{th}$ zero in $\{w^A\}$. By the proof above $n_i-n_{i-1}\leq \beta+1$, so $n_i\leq i(\beta+1)$. In order to have $w(n_i)=0$, we require that within the vector $\vec v(n_i)$, the ${\alpha-x}\nth$ entry $\vec v(n_i)_{\alpha-x}=1$ for all $x\in A$. This fixes $|A|$ entries, so there are $2^{\alpha-|A|}$ possibilities for $\vec v(n_i)$. By the Pigeonhole Principle, there is some repeated vector $\vec v_{n_N}=\vec v_{n_M}$ for some $N<M\leq 2^{\alpha-|A|}$, and $n_M\leq (\beta+1)2^{\alpha-|A|}$. This proves the claim.
\end{proof}
A similar argument can give a bound of $(|A|+1)2^{\alpha-|A|}$. Using the approach from Proposition \ref{prop:notperiodic}, we can see that there are at most $(2^{\alpha-|A|}-1)\cdot(|A|)$ ways to express a winning position less than $n_{2^{\alpha-|A|}}$, which yields this improved constant.

\subsection{Initial Seed}\label{sec:seed}
We can generalize the subtraction game by changing the end state of the game. In \cite[(ix)]{superlinear}, Alth\"ofer and B\"ulterman suggest that this variant ``may be interpreted as simulations of certain computing devices," and pose open questions about this game. Through the analysis in Sections \ref{sec:ab}, \ref{sec:1,b,c}, and \ref{sec:superpolynomial} we find that this generalization also provides insights into the original game. We begin with two motivating examples.

\begin{example}\label{ex:misere} The \textit{\emph{Miser\'e}} mode of the $A$-subtraction game is the same game except the player to make the last move is the loser.
\end{example}
\begin{example}\label{ex:greedy} The \textit{\emph{Greedy}} mode of the $A$-subtraction game is the same game except a player may take $x>n$ chips, thereby making the heap negative. The game concludes when then heap is negative, and the player to make the last move is the winner. This is close to the version studied in \cite{superlinear}.\footnote{The exact game studied in \cite{superlinear} is the same but has $w(0):=0$; this cannot be generated from a seed as there is no function $w:\Z\to \{0,1\}$ satisfying recurrence \ref{eq:recurrence} for all $n\in \N$. For example, if $A=\{1,3\}$, we desire $w(0)=0$, $w(1)=1$, and $w(2)=1$. Any choice of $w(-1)$ leads to a contradiction.}
\end{example}
Notice that for $n\geq \alpha$, the recurrence relation for both of these games is the same as Equation \eqref{eq:recurrence}, but the ultimate sequence of winning positions may be different because of the players' final goals. We can account for this by adjusting negative values of $w(n)$, then allowing the recurrence relation to proceed for $n\geq 0$. 
\begin{definition} Define the \emph{seed} $S$ of a game to be the value of $\vec v(0)$, determining $w(n)$ for $n\in [-\alpha,0)$. The recurrence relation follows, so $\vec v(n):=\mathcal{F}^n(S)$, with $\mathcal F$ defined in Equation \eqref{eq:vectorfunction}. We define $\{w^{A,S}(n)\}_{n=0}^\infty$ to be the sequence generated by seed $S$. Further define $\per(A,S)$ and $\preper(A,S)$ to be the period and preperiod of $\{w^{A,S}\}$.
\end{definition}
For ease of notation, we interpret $S$ more generally as the negative values of $w^A(n)$. Ordinarily $|S|\leq\alpha$, and if not then $\vec v^{A,S}(0)$ is taken to be the last $\alpha$ elements of $S$. Similarly, if $|S|<\alpha$, then we presume $S$ to be preceded by infinitely many $1$'s, so let $w^{\{A,S\}}:=w^{\{A,1^{\alpha-|S|}S\}}$. It follows from this convention and Definition \ref{def:period} that $\per(A)=\per(A,1^\alpha)$, and thus in general we refer to a game with seed $1^\alpha$ as having `no seed'.

In Example \ref{ex:misere}, we observe that $S=0^{\min(A)}1^{\alpha-\min(A)}$ generates the sequence of winning positions for the Miser\'e mode of the subtraction game, and in Example \ref{ex:greedy}, we see that $S=0^\alpha$ generates the sequence for the Greedy game. By considering all seeds in $\Z_2^\alpha$ we describe a larger class of games. Some seeds generate games which are similar or identical to the original, while some are dramatically different. For example, the Miser\'e and Greedy modes cause only a translation in $w(n)$ by $\min(A)$ and $\alpha$ respectively.

Notice that Results \ref{lem:periodic} - \ref{thm:periodbound} consider the recurrence in generality and therefore hold for all seeds. In order to characterize $w^{A,S}$ over all seeds, we provide the following notation for the set of all  winning sequences and their periods.
\begin{align}\label{eq:defW,P}
\mathcal{W}^A&:=\Big\{\{w^{A,S}(n)\}_{n=0}^\infty\mid S\in \{0,1\}^\alpha\Big\}\\
\mathcal{P}^A&:=\Big\{\per(A,S)\mid S\in \{0,1\}^\alpha\Big\}
\end{align}
Thus $\mathcal W^A$ is the set of all $(A,S)$ games, i.e.\ all sequences satisfying recurrence relation \ref{eq:recurrence}, and $\mathcal P^A$ is the set of their periods. For general $A$, a natural open question is to find $\max(\mathcal P^A)$.

\subsection{Properties of Subtraction Games.}\label{sec:properties}

If the elements of $A$ are not coprime, we can interpret the sequence $w$ as multiple games $(w)_i$ proceeding in parallel. Formally, choose finite $A\subset \N_+$ with maximum $\alpha$ and $\gcd$ $1$, and denote $kA=\{kx\mid x\in A\}$. Then given some seed $S$ with $|S|=k\alpha$, where $S=\ang{S(m)\mid m\in\{0,\ldots,k\alpha-1\}}$, we break $S$ into classes modulo $k$ by defining $S_i$ for $i\in \{0,\ldots k-1\}$ such that for all $m\in \{0,\ldots \alpha-1\}$, we have $S_i(m):=S(mk+i)$. Thus $|S_i|=\alpha$ and $S$ can be decomposed into $S_i$'s. By analogy, similarly define $(w^{kA,S})_i(m):=w^{kA,S}(mk+i)$. We observe that $\{(w^{kA,S})_i(m)\}_{m=0}^\infty$ depends only on $S_i$.

\begin{proposition}[Multiplicative Linearity]\label{prop:linearity}
For any $k\in \N$, set $A$, and seed $S$ with $|S|=k\alpha$, then
\begin{equation}\label{eqthm:linearity}
(w^{kA,S})_i(m)=w^{A,S_i}(m)
\end{equation}
So if $S_i=S_0$ for all $i\in \{0,\ldots,k-1\}$, then $\per(kA,S)= k\per(A,S_0)$ and $\preper(kA,S)= k\preper(A,S_0)$. This condition holds for $S=1^{k\alpha}$, implying that $\per(kA)=k\per(A)$ and $w^{kA}(n)=w^A(\floor{n/k})$.
\hfill $\square$
\end{proposition}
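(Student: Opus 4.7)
The plan is to exploit the key structural observation that every move in the $kA$-subtraction game changes the heap size by a multiple of $k$, so positions with different residues modulo $k$ never interact. The game therefore decomposes into $k$ parallel, independent $A$-subtraction games, one for each residue class $i \in \{0,\ldots,k-1\}$.

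First I would verify that the initial (negative-index) values agree on each residue class. By the definition $S_i(m) := S(mk+i)$, restricting $w^{kA,S}$ to indices congruent to $i$ modulo $k$ and dividing the index by $k$ gives exactly the seed $S_i$, which has length $\alpha$ as required to initialize the $A$-game with seed $S_i$. So $(w^{kA,S})_i(m) = w^{A,S_i}(m)$ on the negative range.

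Next I would induct on $m \ge 0$. Applying recurrence \eqref{eq:recurrence} with the set $kA$ at position $n = mk+i$ gives
\[
(w^{kA,S})_i(m) \;=\; 1 - \min\bigl\{ w^{kA,S}(mk+i - kx) \,\big|\, x \in A \bigr\}.
\]
Since $mk+i - kx = (m-x)k + i$, every argument on the right lies in residue class $i$, so it equals $(w^{kA,S})_i(m-x)$, which by the inductive hypothesis equals $w^{A,S_i}(m-x)$. Taking the minimum and subtracting from $1$ yields $w^{A,S_i}(m)$ by recurrence \eqref{eq:recurrence} applied to the set $A$. This proves equation~\eqref{eqthm:linearity}.

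For the corollaries, if $S_i = S_0$ for all $i$, then all $k$ interleaved subsequences equal $w^{A,S_0}$, and interleaving $k$ identical sequences of period $p$ and preperiod $N$ produces a sequence of period $kp$ and preperiod $kN$ (minimality is inherited because any shorter period of the interleaved sequence would, when restricted to a single residue class, give a shorter period of $w^{A,S_0}$). The no-seed case $S = 1^{k\alpha}$ gives $S_i = 1^\alpha$ for every $i$, so the hypothesis holds, yielding $\per(kA) = k \per(A)$; writing $n = mk + i$ with $m = \floor{n/k}$, we also get $w^{kA}(n) = (w^{kA})_i(m) = w^A(m) = w^A(\floor{n/k})$. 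I do not anticipate any real obstacle: the argument is an unpacking of the indexing conventions followed by a one-line induction, with the only care needed being the bookkeeping around negative indices and the length of $S$.
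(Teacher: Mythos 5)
Your proposal is correct and is exactly the argument the paper intends: the paper's entire justification is the one-line remark that the proposition ``follows by applying the recurrence relation to $(w^{kA,S})_i(m)$,'' and your residue-class decomposition with the observation $mk+i-kx=(m-x)k+i$ is precisely that computation, written out in full together with the seed bookkeeping and the period/preperiod consequences.
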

Proposition \ref{prop:linearity} follows by applying the recurrence relation to $(w^{kA,S})_i(m)$.

\begin{definition}[Extension]
Choose set $A\subset \N_+$ and seed $S$. We say $b\in \N\setminus A$ is an \emph{extension} of $(A,S)$ if $w^{A\cup\{b\},S}= w^{A,S}$.
\end{definition}

\begin{proposition}[Better Definition of Extension]
For $b\in \N\setminus A$ is an extension of $(A,S)$ if and only if for all $n\in \N$ it holds that $w^{A,S}(n)=0\implies w^{A,S}(n-b)=1$.
\end{proposition}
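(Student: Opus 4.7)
The plan is to prove both implications directly from the recurrence \eqref{eq:recurrence}, with the nontrivial direction handled by strong induction on $n$.

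For the forward direction, assume $b$ is an extension, so $w^{A\cup\{b\},S}(n)=w^{A,S}(n)$ for every $n$. Take any $n\in\N$ with $w^{A,S}(n)=0$; then $w^{A\cup\{b\},S}(n)=0$, and the recurrence applied in the $A\cup\{b\}$ game forces $w^{A\cup\{b\},S}(n-x)=1$ for every $x\in A\cup\{b\}$. Specialising to $x=b$ gives $w^{A,S}(n-b)=w^{A\cup\{b\},S}(n-b)=1$, which is exactly the stated condition. This part is essentially immediate; no obstacles.

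For the converse, assume the implication $w^{A,S}(n)=0\Rightarrow w^{A,S}(n-b)=1$ holds for every $n\in\N$, and prove $w^{A\cup\{b\},S}=w^{A,S}$ by strong induction on $n$. The base case covers all negative indices: both sequences are defined there by the same seed $S$ (padded by $1$'s according to the convention just after the definition of a seed), so they agree. For the inductive step at $n\ge 0$, assume $w^{A\cup\{b\},S}(m)=w^{A,S}(m)$ for all $m<n$ and split into two cases. If $w^{A,S}(n)=1$, the recurrence \eqref{eq:recurrence} for $A$ provides some $x\in A\subset A\cup\{b\}$ with $w^{A,S}(n-x)=0$; by induction $w^{A\cup\{b\},S}(n-x)=0$, so the recurrence for $A\cup\{b\}$ gives $w^{A\cup\{b\},S}(n)=1$. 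If $w^{A,S}(n)=0$, then $w^{A,S}(n-x)=1$ for every $x\in A$, and by our standing hypothesis also for $x=b$; induction lifts these equalities to the $A\cup\{b\}$ game, and the recurrence then yields $w^{A\cup\{b\},S}(n)=0$.

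The only subtle point is handling the case $n-b<0$ in the converse: here $w^{A,S}(n-b)$ is determined by $S$ (with $1$-padding on the left), and the same expression gives $w^{A\cup\{b\},S}(n-b)$, so the inductive identity for negative indices holds by definition rather than by induction. Beyond this book-keeping, both directions are short and mechanical; there is no real technical obstacle to flag.
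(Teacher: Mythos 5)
Your proof is correct and follows essentially the same route as the paper: your strong induction on $n$ is the ``choose the least $n$ where the two sequences differ'' argument in the paper's proof, and both directions use the recurrence \eqref{eq:recurrence} in the same way. Your explicit treatment of the negative indices and the $1$-padding convention is a slightly more careful rendering of a point the paper leaves implicit, but it is not a different approach.
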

\begin{proof}

Let $B=A\cup \{b\}$. Suppose it holds that $w^{A,S}(n)=0\implies w^{A,S}(n-b)=1$, but $w^{B,S}\neq w^{A,S}$, and choose the least $n\in \N$ where they differ. If $w^{A,S}(n)=1$, then for some $x\in A$, $w^{A,S}(n-x)=w^{B,S}(n-x)=0$, so $w^{B,S}(n)=1$. If instead $w^{A,S}(n)=0$, then $w^{A,S}(n-b)=w^{B,S}(n-b)=1$, so indeed $w^{B,S}=0$, contradicting that $w^{B,S}\neq w^{A,S}$.

In the other direction, suppose $w^{B,S}= w^{A,S}$ but there is some $n$ with $w^{B,S}(n) =w^{A,S}(n)=0$ and $w^{B,S}(n-b)=w^{A,S}(n-b)=0$. This contradicts the recurrence relation for $B$.
\end{proof}

This means that we can identify redundant elements of a set $A$ if they are extensions of the other elements. The following proposition is a digression, but 
\begin{proposition}
For all finite or infinite sets $A\subseteq \N_+$, if $\preper(A)=0$ and $\per(A)=p$, then $w^A=w^{A\cap \{1,\ldots,p\}}$, so every element of $A$ greater than $p$ is redundant as an extension of $A\cap \{1,\ldots,p\}$. There is no analogous statement for sequences with preperiods; for example $\{w^{\{1,4,7,\ldots\}}\}=0(101)^\infty$, but no other set generates that sequence. 
\end{proposition}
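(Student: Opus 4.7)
The plan is to prove the main equality $w^A = w^{A'}$, where $A' := A \cap \{1, \ldots, p\}$, by induction on $n \geq 0$, and then handle the counterexample by direct computation. The base case $n = 0$ is immediate, as both sides equal $0$ by convention; note also that $A'$ is finite even when $A$ is infinite, so the inductive argument is insensitive to the cardinality of $A$.

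For the inductive step, assume $w^{A'}(m) = w^A(m)$ for all $m < n$. The case $w^A(n) = 0$ is straightforward: the recurrence forces $w^A(n - x) = 1$ for every $x \in A$, and since $A' \subseteq A$, the induction hypothesis (or the negative-index convention when $x > n$) gives $w^{A'}(n - x) = 1$ for every $x \in A'$, so $w^{A'}(n) = 0$.

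The heart of the proof is the case $w^A(n) = 1$, where I must produce a winning move in $A'$. The key idea---and really the only nontrivial step---is to exploit $\preper(A) = 0$ to fold a witness back into the first period. Set $n^* := n \bmod p$, so $0 \leq n^* < p$; by pure periodicity, $w^A(n^*) = w^A(n) = 1$. Since $w^A(0) = 0$ and $w^A(n^*) = 1$, we have $n^* \geq 1$, and then $w^A(n^*) = 1$ produces some $x^* \in A$ with $w^A(n^* - x^*) = 0$. Such a witness must be attainable from $n^*$, meaning $x^* \leq n^* < p$, which places $x^* \in A'$. Periodicity then carries the zero back to position $n$: $w^A(n - x^*) = w^A(n^* - x^*) = 0$, and the induction hypothesis, applicable because $n - x^* \geq n - n^* \geq 0$, yields $w^{A'}(n - x^*) = 0$, hence $w^{A'}(n) = 1$. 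The main point to watch is the bookkeeping of the corner cases $n^* = 0$, $x^* > n^*$, and $n - x^* < 0$, which are ruled out respectively by $w^A(0) = 0$, the negative-index convention, and $x^* \leq n^*$.

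For the counterexample, I would first compute $w^{\{1,4,7,\ldots\}} = 0(101)^\infty$ directly from the recurrence. To establish uniqueness, suppose $w^B = 0(101)^\infty$ and read off $B$ position by position. The value $w^B(1) = 1$ forces $1 \in B$ (since $x = 1$ is the only attainable move). Each zero $w^B(3k+2) = 0$ for $k \geq 0$ forbids $B$ from containing $3k+2$ or $3j$ for any $1 \leq j \leq k$, so as $k$ varies, $B$ contains no positive multiple of $3$ and no integer $\equiv 2 \pmod{3}$; thus $B \subseteq \{1, 4, 7, \ldots\}$. Finally, for each $n \equiv 1 \pmod{3}$ with $n \geq 1$, $w^B(n) = 1$ demands a witness $x \in B$ with $w^B(n - x) = 0$; since every such $x$ satisfies $x \equiv 1 \pmod{3}$, we get $n - x \equiv 0 \pmod{3}$, and the only zero-position with this congruence is $0$ itself, forcing $x = n$ and hence $n \in B$. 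Thus $B = \{1, 4, 7, \ldots\}$, as claimed.
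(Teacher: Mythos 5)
Your proof is correct and follows essentially the same strategy as the paper's: an induction (the paper phrases it as a minimal counterexample) whose key step uses pure periodicity together with the no-seed convention to force a winning move $x^*\leq n^* <p$, and a uniqueness argument for $0(101)^\infty$ that first pins down $B\subseteq 3\N+1$ from the zero positions. The only cosmetic differences are that you reduce $n$ all the way to its residue mod $p$ where the paper steps back a single period, and that you establish $3\N+1\subseteq B$ by noting each $n\equiv 1\pmod 3$ admits only the witness $x=n$, where the paper argues via a least missing element; both variants are sound.
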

\begin{proof}For the first statement, let $B=A\cap \{1,\ldots, p\}$. Suppose for the sake of contradiction there is some least $n$ such that $w^A(n)\neq w^B(n)$. By the recurrence relation it must be that $w^A(n)=1$ and $w^B(n)=0$, and $w^A(n-a)=0$ for some $a\in A\setminus B$. By our definitions $a>p$ so $n>p$. Then $w^B(n-p)=w^A(n-p)=1$ so there is some $x\in B$ such that $w^B(n-p-x)=0$. Therefore $w^B(n-x)=0$, so $w^B(n)=1$, a contradiction.
For the second statement, suppose $w^A=0(101)^\infty$. Suppose $x\equiv 0\pmod 3$. Then $w^A(2)=w^A(2+x)=0$, so $x\notin A$. Suppose $x\equiv 2\pmod 3$. Then $w^A(0)=w^A(x)=0$, so $x\notin A$. Therefore $A\subseteq 3\N+1=\{1,4,7,\ldots\}$. It is easy to see that $A=3\N+1$ is a valid choice. Suppose $A\subsetneq 3\N+1$, and $3k+1$ is the least element of $3\N+1\setminus A$. Then surely $w^A(n)=w^{3\N+1}(n)$ for all $n< 3k+1$. However, for all $3j+1\in A$, $j<k$, $w^A(3k+1-(3j+1))=w^A(3(k-j))=1$, so $w^A(3k+1)=0$, a contradiction. Thus $3\N+1$ is the only set generating this sequence.\end{proof}
The following proposition gives another way to identify these extensions, which we use later in the paper.    
\begin{proposition}\label{prop:invariant-extension}
If $\{w^A\}$ is periodic over $p$ and $\preper(A)=0$, then for all $x\in A$ and $k\geq 1$, $b=kp+x$ is an extension of $A$.
\end{proposition}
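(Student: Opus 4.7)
The plan is to invoke the \emph{Better Definition of Extension} from the preceding proposition: it suffices to prove that whenever $w^A(n)=0$, we also have $w^A(n-b)=1$, where $b=kp+x$ with $k\ge 1$ and $x\in A$. (If $b$ happens to lie in $A$, then $A\cup\{b\}=A$ and the claim is trivial, so we may assume $b\in\N\setminus A$.)

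I would carry out the main argument in three short steps. First, I would unpack the hypothesis $w^A(n)=0$: the recurrence \eqref{eq:recurrence} immediately forces $w^A(n-y)=1$ for every $y\in A$, and in particular $w^A(n-x)=1$. Second, if $n-b<0$ then $w^A(n-b)=1$ by the convention that $w^A$ takes the value $1$ on all negative integers, and we are done. Third, if $n-b\ge 0$, then $n-x=(n-b)+kp\ge kp\ge p$, and because $\preper(A)=0$ we have $w^A(m)=w^A(m+p)$ for every $m\ge 0$. Iterating this equality $k$ times starting from $m=n-b$ yields
\begin{equation*}
w^A(n-b)=w^A(n-b+p)=\cdots=w^A(n-b+kp)=w^A(n-x)=1,
\end{equation*}
which completes the argument.

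There is no real obstacle here — once the Better Definition of Extension is in hand, the proof is a single short calculation — but the hypothesis $\preper(A)=0$ is essential and is the one place to be careful. With a positive preperiod, the identity $w^A(m)=w^A(m+p)$ holds only for $m$ beyond some threshold, and the iterated shift in the third step could cross into the pre-periodic region and break the chain of equalities. Thus the hypothesis is not decorative, and any attempt to generalize to $\preper(A)>0$ would have to grapple with exactly this issue.
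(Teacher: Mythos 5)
Your proof is correct and follows essentially the same route as the paper's: reduce via the Better Definition of Extension to showing $w^A(n)=0\implies w^A(n-b)=1$, get $w^A(n-x)=1$ from the recurrence, and then conclude $w^A(n-b)=1$ either by periodicity with no preperiod (when $n-b\ge 0$) or by the no-seed convention (when $n-b<0$). Your version is slightly more explicit about iterating the period shift $k$ times and about the trivial case $b\in A$, but the argument is the same.
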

\begin{proof}
Suppose $\per(A)\mid p$ and $\preper(A)=0$. Choose any $x\in A$ and $k\geq 1$, and $n\in \N$ such that $w^A(n)=0$. By the recurrence $w^A(n-x)=1$. If $n-x-kp\geq 0$, then by periodicity $w^A(n-x-kp)=1$. Otherwise, $w^A(n-x-kp)=1$ because we have no seed. Thus by the better definition $b$ is an extension of $A$.
\end{proof}

The following elementary example can be found in \cite[thm 1]{3elementsets}.
\begin{example}\label{ex:1}
If $A=\{1\}$, then $\{w^A\}=(01)^\infty$, so $\per(A)=2$ and $\preper(A)=0$. Because $1\in A$, any odd number $2k+1$ is an extension of $A$. 

From Example \ref{ex:1tok}, if $\{1,\ldots,k-1\}\subseteq B$ and $B\cap k\N=\emptyset$, then $\{w^B\}=(01^{k-1})^\infty$ and $\per(B)=k$. The set $B=\{1\}\cup\{p\mid p\text{ prime}\}$ is an example for $k=4$.
\end{example}

We must be careful to note that Proposition \ref{prop:invariant-extension} does not hold for all seeds. If we do allow for initial seeds $|S|\leq \alpha$, the induction step fails for $n\in [p+x-\alpha,p)$. This margin leads to many counterexamples; we provide two. Let $A=\{3,5\}$ and $S=0110$. We find $\per(A,S)=8$ and $\preper(A,S)=0$, so let $b=8+3=11$. Then:
\[\{w^{A,S}\}=(01^50^2)^\infty\hspace{20pt}\text{and}\hspace{20pt}\{w^{A\cup\{b\},S}\}=01^50101^3(01)^\infty\]
These are vastly different and we even caused a preperiod of length $12$. As another example:
\[\{w^{\{3,6,8\},0^21^3}\}=(011^3(01)^3)^\infty\hspace{20pt}\text{and}\hspace{20pt}\{w^{\{3,6,8,12\},0^21^3}\}=(01^401^3001^3)^\infty.\]

We also note that the converse of Proposition \ref{prop:invariant-extension} does not hold, though it appears to hold if $|A|\leq 3$. For example, let $A=\{1,2,6,11\}$. Then $\{w^A\} =0 \big(1^201^3(01^2)^2\big)^\infty$, so $\per(A)=12$, $\preper(A)=1$. However it is true that for all $k\geq 1$ and $x\in A$, $b=12k+x$ is indeed an extension of $A$. Also see the counterexample $\{1, 8, 13, 16\}$.


Contrasting these complex examples with Example \ref{ex:1}, we see that sequences are easiest to examine with no seed and no preperiod. The following Lemma can simplify the process of checking whether a sequence is in this form.

\begin{lemma}[Translating zeros]\label{lem:translating_zeros}
Given a set $A$, then for any $p\in \N$,  $w^A$ is periodic over $p$ and has no preperiod if and only if
it holds for all $x\in A$ and $m<x$ that 
\[w^A(m)=0\hspace{10pt}\implies\hspace{10pt} w^A(m+p-x)=1.\]
\end{lemma}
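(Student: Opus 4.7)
The plan is to handle the two implications separately, using the recurrence \eqref{eq:recurrence} together with the convention that $w^A(k) = 1$ for $k < 0$ (so $w^A(m) = 0$ forces $m \geq 0$ automatically).

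The forward direction is immediate. Assuming $\{w^A\}$ is periodic over $p$ with no preperiod, we have $w^A(n) = w^A(n+p)$ for every $n \geq 0$. Fix $x \in A$ and $m < x$ with $w^A(m) = 0$. Periodicity gives $w^A(m+p) = 0$, so applying \eqref{eq:recurrence} at $n = m+p$ forces $w^A(m+p-y) = 1$ for every $y \in A$, and specializing to $y = x$ yields the claim.

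For the reverse direction, I would prove $w^A(n) = w^A(n+p)$ for all $n \geq 0$ by strong induction on $n$, splitting on the value of $w^A(n)$. If $w^A(n) = 1$, then \eqref{eq:recurrence} supplies some $y \in A$ with $w^A(n-y) = 0$; since $w^A$ takes value $1$ on negative indices, we must have $n - y \geq 0$, so the inductive hypothesis gives $w^A(n+p-y) = w^A(n-y) = 0$, and hence $w^A(n+p) = 1$. If instead $w^A(n) = 0$, then I need $w^A(n+p-y) = 1$ for every $y \in A$: for those $y$ with $n \geq y$ the inductive hypothesis gives $w^A(n+p-y) = w^A(n-y) = 1$, while for those $y$ with $n < y$ the translating-zeros hypothesis applied with $m = n$ and $x = y$ gives exactly $w^A(n+p-y) = 1$. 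The recurrence at $n+p$ then yields $w^A(n+p) = 0$, closing the induction.

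The only subtlety, and the place where the hypothesis of the lemma does real work, is the case $w^A(n) = 0$ with $n < y$ for some $y \in A$: here the recurrence at $n$ invokes $w^A(n - y) = 1$ only through the seed convention, so the inductive hypothesis has no content at the negative index $n - y$. The translating-zeros condition is precisely the minimal statement required to propagate the value $0$ across a period at these boundary indices, which is why it exactly characterizes periodicity over $p$ with zero preperiod rather than merely being sufficient.
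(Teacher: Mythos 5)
Your proof is correct and is essentially the paper's argument: the paper runs the same case analysis (split on the value of $w^A(n)$, use the recurrence, invoke the hypothesis exactly when $n<x$ forces a negative index) as a minimal-counterexample contradiction, which is just the contrapositive of your strong induction. The forward direction is likewise the same observation the paper dismisses as following from the definitions.
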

We call this ``translating zeros'' because 
$w(m)=0\implies w(m+p-x)=1$ suggests that  $w(m+p)=0$, but does not discuss the translation of the $1$'s. Note that 
it does not \emph{a priori} imply that the zeros translate, since only $m<x$ is considered.
\begin{proof} Suppose for the sake of contradiction that the premise $w(m)=0\implies w(m+p-x)=1$ holds but there is some least $m\in \N$ such that $w(m)\neq w(m+p)$. There are two cases.
\begin{enumerate}
    \item[(i)] If $w(m)=1$, then $w(m+p)=0$, so there is some $x\in A$ such that $w(m-x)=0$ but $w(m+p-x)=1$. Because there is no seed this implies $m-x\geq 0$, and because $w(m-x)\neq w(m-x+p)$ this contradicts the minimality of $m$. 
    \item[(ii)] If $w(m)=0$, then $w(m+p)=1$, so there is some $x\in A$ with $w(m+p-x)=0$ but $w(m-x)=1$. If $m-x\geq 0$, then this would contradict the minimiality of $n$. Otherwise, $m<x$ and $w(m)=0$, so the premise implies that $w(m+p-x)=1$, a contradiction.
\end{enumerate}
Either case leads to a contradiction, so for all $m\geq 0$, we have $w(m)=w(m+p)$. In the other direction, if $w$ is periodic with no preperiod then the condition follows by definition.
\end{proof}

\begin{corollary}\label{cor:translating_zerosabc}
If $A=\{a,b,c\}$ for any $a< b,c$, then $\per(A)\mid (b+c)$ and $\preper(A)=0$ if and only if $w^A(b+c-i)=1$ for all $i\in[1,a]$.
\end{corollary}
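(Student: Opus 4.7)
The plan is to apply Lemma \ref{lem:translating_zeros} with $p = b+c$ and show that the three resulting implications, one per element of $A$, collapse to the stated hypothesis. First, I will argue that for $x = b$ and $x = c$ the lemma's condition ``$w^A(m) = 0 \implies w^A(m + p - x) = 1$'' becomes ``$w^A(m) = 0 \implies w^A(m + c) = 1$'' and ``$w^A(m) = 0 \implies w^A(m + b) = 1$'' respectively. Both are immediate from Recurrence \eqref{eq:recurrence}: if $m \geq 0$ and $w^A(m) = 0$, then $m + b$ and $m + c$ each have a zero predecessor in $A$, forcing their $w$-values to be $1$. So these two cases carry no content.

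The only case with substance is $x = a$, where the condition reduces to ``$w^A(m) = 0 \implies w^A(m + b + c - a) = 1$'' for $m \in [0, a)$. My key observation here is that with no seed, every $m \in [0, a)$ actually satisfies $w^A(m) = 0$. Indeed, for such $m$ each predecessor $m - x$ with $x \in A$ is strictly negative, and the default convention $w^A(n) = 1$ for $n < 0$ combined with the recurrence forces $w^A(m) = 1 - 1 = 0$. Consequently the antecedent is always true, so the implication collapses to the unconditional requirement $w^A(m + b + c - a) = 1$ for all $m \in [0, a)$. Reindexing by $i = a - m$ turns this into exactly $w^A(b + c - i) = 1$ for $i \in [1, a]$, matching the hypothesis of the corollary.

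Combining these three reductions with the biconditional in Lemma \ref{lem:translating_zeros} will prove both directions of the corollary simultaneously. I do not anticipate a substantive obstacle: the only step that requires care is verifying that the small initial values $w^A(0), \ldots, w^A(a-1)$ all equal $0$ in the no-seed setting, which is a direct consequence of the default seed convention $w^A(n) = 1$ for $n < 0$ together with the recurrence.
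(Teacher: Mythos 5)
Your proof is correct and follows essentially the same route as the paper's: apply Lemma \ref{lem:translating_zeros} with $p=b+c$, dispose of the cases $x=b$ and $x=c$ via the recurrence, and observe that for $x=a=\min(A)$ the antecedent $w^A(m)=0$ holds automatically for all $m<a$ in the no-seed setting, reducing the condition to the stated unconditional check after the substitution $i=a-m$. In fact your write-up is slightly cleaner than the paper's, which contains two sign typos (writing $=0$ where $=1$ is meant) in the $x=c$ case and in the final reduction.
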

\begin{proof} Apply Lemma \ref{lem:translating_zeros} for $p=b+c$. For all $x\in A$, if $x=b$, then $w(m)=0\implies w(m+p-b)=w(m+c)=1$ by the recurrence. If $x=c$, then $w(m)=0\implies w(m+p-c)=w(m+b)=0$. If $x=a=\min(A)$, then we note that for all $m<a$, $w(m)=0$. Therefore it suffices to check that for $m\in [0,a-1]$, we have $w^A(b+c+m-a)=0$. Simplify by substituting $i=a-m\in [1,a]$. \end{proof}
This Corollary will likely help in proving Conjecture \ref{conj:abcperbc}.
\begin{corollary}\label{cor:translating_zeros1bc}
If $A=\{1,b,c\}$, then $\per(A)\mid (b+c)$ and $\preper(A)=0$ if and only if $w^A(b+c-1)=1$.\hfill $\square$
\end{corollary}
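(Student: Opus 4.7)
The plan is to derive this as an immediate specialization of Corollary \ref{cor:translating_zerosabc}. Since $A = \{1,b,c\}$ with $a = 1$, I apply that corollary with $a = 1$, $b$, $c$ in the roles named. The condition from Corollary \ref{cor:translating_zerosabc} is that $w^A(b+c-i) = 1$ must hold for all $i \in [1,a] = [1,1]$, which collapses to the single requirement $w^A(b+c-1) = 1$. Both implications then transfer directly: if $\per(A) \mid (b+c)$ and $\preper(A) = 0$, the parent corollary gives $w^A(b+c-1) = 1$; conversely, if $w^A(b+c-1) = 1$, the parent corollary yields $\per(A) \mid (b+c)$ and $\preper(A) = 0$.

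The only thing to verify is that Corollary \ref{cor:translating_zerosabc} actually applies, i.e.\ that the hypothesis ``$a < b, c$'' is met. Since $a = 1$ and $b, c \in A \subset \N_+$ are distinct from $1$ (because $A = \{1,b,c\}$ is a three-element set), we have $b, c \geq 2 > 1 = a$, so the hypothesis holds.

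There is no substantive obstacle here; the corollary is a cosmetic restatement of the $a=1$ case of the preceding corollary, and the proof is a one-line citation. The proof can simply read: apply Corollary \ref{cor:translating_zerosabc} with $a = 1$, noting that the range $i \in [1,1]$ consists of the single value $i = 1$.
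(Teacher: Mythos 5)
Your proposal is correct and matches the paper exactly: the paper gives no separate proof for Corollary \ref{cor:translating_zeros1bc}, presenting it as the immediate $a=1$ specialization of Corollary \ref{cor:translating_zerosabc}, where the range $i\in[1,a]$ collapses to the single check $w^A(b+c-1)=1$. Your verification that the hypothesis $a<b,c$ holds is a small but harmless addition.
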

These corollaries give some insight to the usefulness of the translating zeros Lemma. For Corollary \ref{cor:translating_zeros1bc} we can determine the period and preperiod by checking one value of the sequence! This will be used in Section \ref{sec:1,b,c}.

\begin{lemma}\label{lem:pernopreper}
The following statements are equivalent.
\begin{enumerate}[(i)]
\item $w$ is periodic over $p$ with no preperiod
\item For all $n\geq 0$, $w(n)=w(n+p)$
\item For all $n\geq \alpha$, $\vec v(n)=\vec v(n+p)$
\item $\vec v(\alpha)=\vec v(\alpha+p)$
\item For all $x\in A$, for each $m<x$, we have $w(m)=0\implies w(m+p-x)=1$.\hfill $\square$
\end{enumerate}
\end{lemma}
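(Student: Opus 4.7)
The plan is to close a cycle among (i)--(iv) and then fold in (v) through the translating-zeros Lemma already proved.

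First I would observe that (i) $\iff$ (ii) is literally the definition of periodicity with preperiod $0$ (Definition \ref{def:period}), since ``$\preper(A,S)=0$'' says we may take $N=0$ in that definition.

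Next, for (ii) $\implies$ (iii), note that for $n\geq\alpha$ every coordinate of $\vec v(n)=\langle w(n-\alpha),\ldots,w(n-1)\rangle$ has a non-negative index, and (ii) applied to each such index gives $\vec v(n)=\vec v(n+p)$. The implication (iii) $\implies$ (iv) is immediate by specialization to $n=\alpha$.

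The substantive direction in the (ii)--(iv) cycle is (iv) $\implies$ (ii). If $\vec v(\alpha)=\vec v(\alpha+p)$, then unpacking coordinates gives $w(m)=w(m+p)$ for $m=0,1,\ldots,\alpha-1$. For $m\geq\alpha$ one induces: assuming $\vec v(n)=\vec v(n+p)$ for some $n\geq\alpha$, the recurrence $\vec v(n+1)=\mathcal{F}(\vec v(n))$ from Equation \eqref{eq:vectorfunction} yields $\vec v(n+1)=\mathcal{F}(\vec v(n+p))=\vec v(n+p+1)$, and reading the last coordinate gives $w(n)=w(n+p)$. Hence (ii) holds for all $n\geq 0$.

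Finally, (i) $\iff$ (v) is exactly the content of Lemma \ref{lem:translating_zeros} (``translating zeros''), which was established in the paper. Composing the cycle (i) $\iff$ (ii) $\implies$ (iii) $\implies$ (iv) $\implies$ (ii), together with the equivalence (i) $\iff$ (v), produces the full set of equivalences. There is no real obstacle here: the only step doing work is the inductive propagation in (iv) $\implies$ (ii), and the only non-trivial tool invoked is the previously proved translating-zeros Lemma.
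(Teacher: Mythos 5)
Your proposal is correct; the paper offers no proof of this lemma at all (it is stated with a closing box as an immediate consequence of earlier material), and your argument --- definition-chasing for (i)$\Leftrightarrow$(ii), coordinate-wise comparison and the recurrence $\vec v(n+1)=\mathcal F(\vec v(n))$ for the cycle (ii)$\Rightarrow$(iii)$\Rightarrow$(iv)$\Rightarrow$(ii), and Lemma \ref{lem:translating_zeros} for (v) --- is precisely the intended justification. The only caveat worth recording is that the equivalence with (v) inherits the no-seed hypothesis of the translating-zeros lemma (its proof uses $w(m-x)=1$ for $m-x<0$), so the lemma should be read in that setting.
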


\section{The $\{a,b\}$ case}\label{sec:ab}

We first inspect the case when $|A|=1$. If $A=\{a\}$, then the recurrence relation in Equation \ref{eq:recurrence} gives that $w^A(n)=1-w^A(n-a)$ for all $n\in \N$. This gives us an immediate characterization of the periodicity for all possible seeds.
\begin{proposition}
Let $A=\{a\}$. For all seeds $S$, let $\overline S$ be the string exchanging $0$'s and $1$'s. Then $\{w^{A,S}\} = \big(\,\overline S\,S\,\big)^\infty$, so $\per(A,S)\mid 2a$ and $\preper(A,S)=0$.\hfill $\square$
\end{proposition}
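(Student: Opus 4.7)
The plan is to directly unwind the recurrence, which collapses to a very simple form when $|A|=1$. For $A=\{a\}$ the recurrence \eqref{eq:recurrence} specializes to $w(n)=1-w(n-a)$ for every $n\ge 0$, and the seed $S$ fixes the values $w(-a),w(-a+1),\ldots,w(-1)$ (since $\alpha=a$, so $|S|=a$).

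First I would write $S=\langle s_{-a},s_{-a+1},\ldots,s_{-1}\rangle$ and compute block by block. For $n\in[0,a)$ the recurrence immediately gives $w(n)=1-w(n-a)=1-s_{n-a}$, so the first $a$ values of $\{w^{A,S}\}$ are exactly $\overline S$. For $n\in[a,2a)$ the recurrence gives
\[
w(n)=1-w(n-a)=1-\bigl(1-s_{n-2a}\bigr)=s_{n-2a},
\]
so the next $a$ values are exactly $S$. Thus the first $2a$ terms of $\{w^{A,S}\}$ form the string $\overline S\,S$.

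Next I would show that the pattern repeats. A one-line induction on $n\ge 0$ using the recurrence twice yields
\[
w(n+2a)=1-w(n+a)=1-\bigl(1-w(n)\bigr)=w(n),
\]
so $\{w^{A,S}\}=(\overline S\,S)^\infty$, which proves $\per(A,S)\mid 2a$ and $\preper(A,S)=0$. There is no real obstacle here: the statement is essentially a bookkeeping exercise, and the only thing to be careful about is aligning the indexing of $S$ (as negative-indexed values of $w$) with the first $a$ outputs, so that the bit-flip $\overline{(\cdot)}$ appears in the correct block.
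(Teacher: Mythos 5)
Your argument is correct and matches the paper's: the paper states this proposition without a formal proof precisely because, as it notes just beforehand, the recurrence collapses to $w(n)=1-w(n-a)$, and your block-by-block unwinding plus the observation $w(n+2a)=w(n)$ is exactly that immediate reasoning made explicit. Nothing is missing.
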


This gives the result that $\mathcal{W}^{\{a\}}=\big\{(\,\overline S\,S\,\big)^\infty\mid S\in \{0,1\}^a\}\big\}$ and thus $\left|\mathcal{W}^{\{a\}}\right|=2^a$. 
\begin{proposition}
Let $A=\{a\}$, and let $a=2^kc$, where $c$ is odd. Then $\mathcal P^A=\{2^{k+1}d: d\mid c\}$.\hfill$\square$
\end{proposition}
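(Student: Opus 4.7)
The plan is to prove both set inclusions of the claimed equality. For the containment $\mathcal{P}^{\{a\}} \subseteq \{2^{k+1}d : d \mid c\}$, I would start from the preceding proposition's identity $\{w^{\{a\},S}\} = (\overline S\, S)^\infty$, which immediately gives that $\per(\{a\},S)$ divides $2a = 2^{k+1}c$. The recurrence for $A=\{a\}$ yields $w(n+a) = 1-w(n)$ for every $n\geq 0$, so no divisor of $a$ can be a period: such a $p'$ would give $w(n+a) = w(n)$, contradicting the flip. Writing a period as $p = 2^i e$ with $e$ odd, the condition $p\mid 2a$ forces $i\leq k+1$ and $e\mid c$, while $p\nmid a=2^k c$ forces $i>k$. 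Hence $i = k+1$ and $p = 2^{k+1}e$ with $e\mid c$.

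For the reverse containment I need, given any $d\mid c$, a seed $S\in\{0,1\}^a$ realising period $p := 2^{k+1}d$. The strategy is to first build a $p$-periodic binary block of the right shape and then recover $S$ from its repetitions. Let $U = 0^{2^k d - 1}1$ of length $p/2 = 2^k d$, and set $T = U\overline U$, so the second half of $T$ is the bitwise complement of the first half. The infinite sequence $T^\infty$ then consists of alternating monochromatic runs of length exactly $2^k d$, so its minimum period equals $p$. Define $m = 2a/p = c/d$, which is odd because $c$ is odd, and let $S$ be the last $a$ bits of $T^m$, so that $\overline S$ should be the first $a$ bits.

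The key verification is that the first and second halves of $T^m$ really are bitwise complements, which makes $(\overline S\,S)^\infty = T^\infty$ and yields $\per(\{a\},S) = p$. Since $a = mp/2$ and each copy of $T = U\overline U$ contributes two blocks of length $p/2$, the first $a$ bits of $T^m$ pick out the first $m$ of the $2m$ alternating blocks $U,\overline U, U,\overline U,\ldots$, giving $(m+1)/2$ copies of $U$ interleaved with $(m-1)/2$ copies of $\overline U$; the last $a$ bits pick out the remaining blocks in the opposite pattern, which is exactly the bitwise complement block by block. The only nontrivial aspect is this indexing calculation, which works because the parity of $m$ lines up with the two-block structure of $T$; the rest is a direct check.

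The main obstacle I anticipate is confirming that the constructed $T^\infty$ has minimum period exactly $p$, not some proper divisor of the form $2^{k+1}d'$ with $d'\mid d$, $d'<d$. This is immediate from the construction because $T^\infty$ has the shape $(0^{2^k d}1^{2^k d})^\infty$ (up to a cyclic shift), whose maximal monochromatic runs have length exactly $2^k d$, so no shift shorter than $p$ preserves the sequence.
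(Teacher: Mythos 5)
Your proof is correct and takes essentially the same approach as the paper: the forward inclusion is argued identically (period divides $2a$ but not $a$, forcing the exact power of $2$), and your constructed seed is just a cyclic shift of the paper's seed $\left(1^{2^kd}0^{2^kd}\right)^x1^{2^kd}$, producing the same sequence $\left(0^{2^kd}1^{2^kd}\right)^\infty$ up to rotation. The only nitpick is that the first monochromatic run of your $T^\infty$ has length $2^kd-1$ rather than $2^kd$, but this does not affect the minimal-period argument.
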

\begin{proof} First, we show that $p=\per(A,S)$ must be of the form $2^{k+1}d$. We know that $\{w^{A,S}\}$ will always be periodic over $2a$, so $p\mid 2a$. Additionally, we know $w^{A,S}(n)\neq w^{A,S}(n+a)$ for all $n$, so $p\mid\!\!\!\!/\,\, a$. Therefore $2^{k+1}\mid p$. We now show that for any $d\mid c$ we have $2^{k+1}d\in \mathcal P^{\{a\}}$. Because $c/d$ is odd let $c=(2x+1)d$. Then let $S=\left(1^{2^kd}0^{2^kd}\right)^x1^{2^kd}$, so we conclude $\{w^{A,S}\}=\left(\left(1^{2^kd}0^{2^kd}\right)^x1^{2^kd}\ \left(0^{2^kd}1^{2^kd}\right)^x0^{2^kd}\right)^\infty=\left(1^{2^kd}0^{2^kd}\right)^\infty$ and $\per(\{a\},S)=2^{k+1}d$. \end{proof}
These initial results are apparent at first inspection of the problem. With more work will get similarly general results for $|A|=2$. 
\begin{theorem}\label{thm:abperiod}
Let $A=\{a,b\}$.
For all seeds $S$, $\per(A,S)\mid a+b$ and $\preper(A,S)=0$.
\end{theorem}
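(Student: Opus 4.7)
The plan is to apply Lemma \ref{lem:pernopreper}, statement (v), directly with $p = a+b$. Without loss of generality assume $a < b$, so $\alpha = b$. According to the equivalence in that Lemma, to establish both $\per(A,S) \mid a+b$ and $\preper(A,S) = 0$ simultaneously, it suffices to verify for each $x \in \{a,b\}$ and each $m \in [0,x)$ with $w^{A,S}(m) = 0$ that $w^{A,S}\bigl(m + (a+b) - x\bigr) = 1$.

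I would then split into two cases according to $x$. When $x = a$, the target index is $m + b$, which lies in $[b, a+b) \subseteq [\alpha, \infty)$, so the recurrence \eqref{eq:recurrence} applies and expresses $w(m+b)$ in terms of $w(m+b-a)$ and $w(m+b-b) = w(m)$. When $x = b$, the target index is $m+a \geq a \geq 0$, and the recurrence expresses $w(m+a)$ in terms of $w(m+a-a) = w(m)$ and $w(m+a-b)$. In both cases, one of the two predecessors is exactly $w(m) = 0$, so the minimum in the recurrence is forced to $0$, yielding the desired value $1$.

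Because $|A| = 2$, the recurrence at any position has only two predecessors; whenever the translation $p - x$ equals the other element of $A$, one of these predecessors is automatically the $m$ being translated. This makes the hypothesis of Lemma \ref{lem:pernopreper}(v) essentially automatic for the specific choice $p = a + b$, which is why this period works uniformly across all seeds $S$. There is no real obstacle: the entire argument reduces to two one-line recurrence evaluations once the Lemma has been invoked. The only mild subtlety is making sure the argument is independent of $S$, which follows because we never refer to any seed value $w(n)$ with $n < 0$ — the chain of reasoning only uses $w(m)$ for $m \geq 0$.
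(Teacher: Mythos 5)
Your route is genuinely different from the paper's: the paper argues directly that $w(n)=0\implies w(n+a+b)=0$ and then $w(n)=1\implies w(n+a+b)=1$, whereas you reduce the whole statement to Lemma~\ref{lem:pernopreper}(v). That reduction is where the gap lies. The only version of this equivalence the paper actually proves is Lemma~\ref{lem:translating_zeros}, and the direction you need is proved there using the absence of a seed (``Because there is no seed this implies $m-x\geq 0$''): when a minimal $m$ has $w(m)=1\neq w(m+p)$, the witness $w(m-x)=0$ may sit at a \emph{negative} index, i.e.\ inside the seed, and then neither the minimality of $m$ nor your hypothesis (which only covers $0\le m<x$) says anything. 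So invoking Lemma~\ref{lem:pernopreper}(v) for an arbitrary seeded game with $m$ restricted to $[0,x)$ is not justified by anything proved in the paper; your closing remark that the argument ``never refers to any seed value'' identifies exactly the point that fails, because the seed enters through the proof of the lemma, not through your verification of its hypothesis.

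The gap is not cosmetic. Take $A=\{1,3\}$ and the seed $\langle w(-3),w(-2),w(-1)\rangle=\langle 0,0,1\rangle$. The recurrence gives $w(0),w(1),w(2),w(3),w(4),\ldots=1,1,0,1,0,1,0,\ldots$, so your condition holds with $p=4$ (the only relevant zero is $w(2)=0$, and indeed $w(2+4-3)=w(3)=1$), yet $w(0)=1\neq 0=w(4)$: the sequence has preperiod $1$. Hence (v)$\Rightarrow$(i) is false for seeded games and your argument cannot be completed as written. (The same example probes the soft spot in the paper's own proof: in the $w(n)=1$ case it applies the ``$0$-case'' to $n-a$ or $n-b$ even when that index lies in the seed, where the forward implication $w(\cdot)=0\Rightarrow w(\cdot+a)=w(\cdot+b)=1$ need not hold. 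For the default seed $1^\alpha$ both arguments are fine, since $w(n-x)=0$ then forces $n-x\ge 0$.) Any seed-uniform repair has to either restrict the claim to $n\ge\alpha$, i.e.\ allow a preperiod of length at most $\alpha$, or impose a condition on the seed itself.
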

\begin{proof}
Let $A=\{a,b\}$. For all $n\in \N$, in the case where $w(n)=0$ we have $w(n+a)=w(n+b)=1$, which implies that $w(n+a+b)=0$. Alternately, if $w(n)=1$, then $w(n-a)=0$ or $w(n-b)=0$, so by the $0$-case we know $w(n+b)=0$ or $w(n+a)=0$ respectively. This means $w(n+a+b)=1$.
\end{proof}

The following Theorem on $2$-set games with no seed is well known and can be found in \cite[p. 530]{winningways3}.

\begin{theorem}\label{thm:abstructure}
Let $A=\{a,b\}$ with $a<b$, and let $b=qa+r$ with $0\leq r<a$. Then
\begin{equation}\label{eqthm:abbasic}
\{w^A\}=\begin{cases}
(0^a1^a)^{q/2}0^r1^a &q\text{ is even.}\\
(0^a1^a)^{\ceil{q/2}}1^r &q\text{ is odd.}
\end{cases}\ =
\Big(\underbrace{0^a1^a0^a1^a\ldots}_{b}1^a\Big)^\infty
\end{equation}
so $\per(A)=2a$ if $b$ is an odd multiple of $a$ and $\per(A)=a+b$ otherwise.
\end{theorem}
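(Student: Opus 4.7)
The plan is to combine Theorem \ref{thm:abperiod} with a direct induction on $n$. Theorem \ref{thm:abperiod} already tells us that $\{w^A\}$ is purely periodic with period dividing $a+b$, so it suffices to compute the first $a+b$ values of $w^A$ and then identify the minimal period.

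To get these values, I partition $[0, a+b)$ into three sub-intervals and apply the recurrence $w(n) = 1 - \min(w(n-a), w(n-b))$ with the seed convention $w(k) = 1$ for $k < 0$. For $n \in [0, a)$, both $n-a$ and $n-b$ are negative, forcing $w(n) = 0$ and producing the initial $0^a$. For $n \in [a, b)$, we still have $n - b < 0$, so $w(n-b) = 1$ and the recurrence collapses to $w(n) = 1 - w(n-a)$; iterating from the initial $0^a$ forces the $a$-block alternation $0^a, 1^a, 0^a, 1^a, \ldots$ across $q$ complete blocks of length $a$, followed by a partial block of length $r$ taking value $0$ if $q$ is even and $1$ if $q$ is odd. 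For $n \in [b, a+b)$ we have $n - b \in [0, a)$, so $w(n-b) = 0$, forcing $w(n) = 1$ and yielding the trailing $1^a$. Concatenating the three ranges reproduces the unified expression $\Big(\underbrace{0^a 1^a 0^a 1^a \ldots}_{b}\, 1^a\Big)^\infty$ of Equation \eqref{eqthm:abbasic}, which matches both case formulas after splitting $b = qa + r$ by the parity of $q$.

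For the minimal period, I would show that in every case except ``$r = 0$ and $q$ odd'' the length-$(a+b)$ period contains a unique \emph{defect}, by which I mean a maximal run whose length differs from $a$: a $0$-run of length $r$ when $q$ is even and $r > 0$; a $1$-run of length $a+r$ when $q$ is odd and $r > 0$; and a $1$-run of length $2a$ when $q$ is even and $r = 0$ (since there the appended trailing $1^a$ sits immediately after the preceding $1^a$ block of the alternating part). Any period must send maximal runs to maximal runs of the same length, so a unique defect is fixed by the shift and the shift is therefore a multiple of $a+b$; hence the minimal period equals $a+b$. In the remaining case ($r = 0$ and $q$ odd) the formula collapses to $(0^a 1^a)^{(q+1)/2}$, whose minimal period is obviously $2a$.

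The induction itself is routine. The main obstacle is the minimal-period step: keeping the defect argument clean requires separately checking in each case that the proposed defect really is the unique maximal run of non-standard length, and in particular that no wrap-around across the period boundary quietly creates a second non-standard run.
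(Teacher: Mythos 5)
Your proposal is correct and follows essentially the same route as the paper: for $n<b$ the seed convention forces $w(n-b)=1$ so the recurrence collapses to the $\{a\}$-game and yields the alternating $a$-blocks, for $n\in[b,a+b)$ one has $w(n-b)=0$ hence $w(n)=1$, and Theorem \ref{thm:abperiod} supplies periodicity over $a+b$. Your unique-defect argument for the minimal period is sound (it correctly isolates the single maximal run of non-standard length in each case, including the wrap-around $1^{2a}$ run when $q$ is even and $r=0$, and the collapse to period $2a$ when $q$ is odd and $r=0$); the paper merely asserts the values of $\per(A)$ without justification, so on that point you are more thorough than the source.
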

\begin{proof}
For $n<b$, we note that by having no seed $w^{A}(n-b)=1$. This implies $w^{A}(n)=w^{\{a\}}(n)$. This yields the sequence $\{w^A\}=\underbrace{0^a1^a0^a1^a\ldots}_{b}$\ . For $n\in [b,a+b)$, we note that $n-b\in [0,a)$ so $w^A(n-b)=0$ and thus $w^A(n)=1$. Theorem \ref{thm:abperiod} implies that this period repeats.
\end{proof}

\subsection{A Preliminary Tool: Studying $\{1,b\}$}\label{sec:1,b}

Theorem \ref{thm:abstructure} characterizes the period structure for the default seed. Our next goal is to describe the sequence under all seeds. One corollary of the following theorems is that for any $a,b$ coprime, $4,6\notin \mathcal P^{\{a,b\}}$. At present, it seems like it should be possible to find a seed which generates a period of $4$ or $6$ as long as $4\mid a+b$ or $6\mid a+b$, but in fact it is not. We will give a full characterization of all period structures and lengths which will make this fact obvious. To do this, we first analyze the special case where $a=1$.

\begin{theorem}\label{thm:1bstructure}
Suppose we have $A=\{1,b\}$ and a string $X$ with no-subperiod. Then $X^\infty\in \mathcal W^A$ if and only if $|X|\Big|(1+b)$ and $X$ is some concatenation of $01$ and $011$ under some rotation.
\end{theorem}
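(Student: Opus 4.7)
The plan is to prove both directions by analyzing the local structure that the recurrence imposes on any periodic tail.

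For the forward direction, suppose $X^{\infty} \in \mathcal{W}^{A}$, so that $X^{\infty} = \{w^{A,S}\}$ for some seed $S$. The divisibility $|X| \mid (1+b)$ is immediate: Theorem \ref{thm:abperiod} gives $\per(A,S) \mid 1+b$, and since $X$ has no subperiod, $|X|$ equals $\per(A,S)$. For the structural description, I would extract two forbidden patterns. First, because $1 \in A$, the recurrence \eqref{eq:recurrence} forces $w(n)=0 \Rightarrow w(n+1)=1$, so two consecutive zeros never occur. Second, if three consecutive ones occurred at positions $n-2, n-1, n$ in the periodic part, then applying the recurrence to $w(n)=1$ using $w(n-1)=1$ forces $w(n-b)=0$, and applying it to $w(n-1)=1$ using $w(n-2)=1$ forces $w(n-b-1)=0$, contradicting the first forbidden pattern. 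Together these constraints say that the cyclic word $X$ consists of isolated zeros separating runs of ones of length $1$ or $2$; rotating $X$ so it begins with a zero then gives the decomposition into blocks $01$ and $011$.

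For the converse, suppose $X$ admits such a rotation and $|X|$ divides $1+b$. Extend $X^{\infty}$ to a bi-infinite periodic sequence $y : \Z \to \{0,1\}$. The key observation is that $|X| \mid (1+b)$ gives $b \equiv -1 \pmod{|X|}$, so $y(n-b) = y(n+1)$ for every $n \in \Z$. Using this shift identity and the $\{01,011\}$ structure, I would check the recurrence at every integer $n$. If $y(n)=0$, then $y(n-1)=1$ (no adjacent zeros) and $y(n-b)=y(n+1)=1$ (every zero is followed by a one), matching the $0$-case of \eqref{eq:recurrence}. If $y(n)=1$, then $y(n-1)=1$ and $y(n-b)=y(n+1)=1$ simultaneously would produce three consecutive ones $y(n-1)\,y(n)\,y(n+1)$, which is forbidden; hence at least one of $y(n-1), y(n-b)$ is $0$, matching the $1$-case. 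Setting $S := \langle y(-b), \ldots, y(-1) \rangle$, the recurrence propagates $y$ uniquely forward from $S$, so $\{w^{A,S}\} = X^{\infty}$, giving $X^{\infty} \in \mathcal{W}^{A}$.

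The main obstacle is the bookkeeping between the cyclic word $X$ and the bi-infinite sequence $y$. In particular, establishing the equivalence ``no two adjacent zeros and no three adjacent ones cyclically'' $\Leftrightarrow$ ``some rotation decomposes into blocks $01$ and $011$'' needs a small combinatorial argument, and one must rule out degenerate cases such as $X = 1^{|X|}$ or $X = 0^{|X|}$ (both are excluded by the forbidden patterns, since $b \geq 2$ forces $|X| \geq 3$ in the periodic regime, or can be handled directly). Once this structural equivalence is clean, the conceptual heart is the shift identity $y(n-b)=y(n+1)$, which converts the global length condition $|X| \mid (1+b)$ into a purely local check of the two cases of \eqref{eq:recurrence}.
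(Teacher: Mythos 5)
Your proposal is correct and follows essentially the same route as the paper: both directions rest on the two forbidden local patterns (no $00$, no $111$) together with the shift identity $w(n-b)=w(n+1)$ coming from $|X|\mid(1+b)$, and the converse is verified by the same local case check of recurrence \eqref{eq:recurrence} with the seed taken to be a full period. The only cosmetic differences are that you derive ``no three consecutive ones'' by producing two adjacent zeros rather than by translating a single zero forward by $b+1$, and you organize the converse by the value of $y(n)$ instead of by its neighbors; neither changes the substance.
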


\begin{proof}$\implies$ Suppose that $\{w^{A,S}\}=X^\infty$. By the assumption that $X$ has no sub-period, $\per(A)=|X|$, so by Theorem \ref{thm:abperiod} $|X|\Big|(1+b)$. Next we will show that $X$ has no two consecutive $0$'s or three consecutive $1$'s. By the recurrence relation, $w(n)=0$ implies $w(n+1)=1$. Additionally, suppose for some sufficiently large $n$ that $w(n)=w(n-1)=1$ This implies $w(n-b)=0$, so by Theorem \ref{thm:abperiod} $w(n-b+b+1)=w(n+1)=0$. These two restrictions mean that $X$ is a concatenation of $01$ and $011$ under some rotation.
\vskip 3pt 
$\Longleftarrow$ Suppose that $k|X|=(1+b)$ for $k\geq 1$ and $X$ is some concatenation of $01$ and $011$. For the seed $S$ we can simply choose $X^k$, so it suffices to show that $X^\infty$ satisfies the recurrence relation. Choose $n\in \N$. First, suppose $X^\infty(n-1)=0$. Because there are no consecutive $0$'s, $X^\infty(n)=1$, which satisfies the recurrence relation. Now suppose $X^\infty(n-1)=1$ and $X^\infty(n+1)=0$. Because there are no consecutive $0$'s, $X^\infty(n)=1$. Additionally because $X^\infty$ is periodic over $|X|$, $X^\infty(n+1-k|X|)=X^\infty(n-b)=0$, so indeed $X^\infty(n)=1$ satisfies the recurrence. Now consider the last case $X^\infty(n-1)=X^\infty(n+1)=1$. Because there are no three consecutive $1$'s, this implies $X^\infty(n)=0$. Additionally $X^\infty(n+1-k|X|)=X^\infty(n-b)=1$ and by assumption $X^\infty(n-1)=1$, so $X^\infty(n)=0$ satisfies the recurrence.
\end{proof}

Recall from Theorem \ref{thm:abperiod} that $\{w^{A,S}\}$ can never have a preperiod, so Theorem \ref{thm:1bstructure} completely characterizes all possible sequences for $\{1,b\}$. We also note that instead of considering strings $X$ with $|X|\big| (b+1)$ and no subperiod, we can equivalently consider all $X$ with $|X|=(b+1)$ and allow for any subperiod of length $p\mid (b+1)$. Define the sets
\[\mathcal Q:=\{X\in \{0,1\}^\ell\mid  \ell\in \N,\ X\text{ is a concatenation of 01 and 011 under some rotation}\},\] and $\mathcal Q(\ell):=\{X\in \mathcal Q\mid |X|=\ell\}$. Sequences of this form can be generated iteratively by hand or by computer. For example, $\mathcal Q(2)=\{01,10\}$, $\mathcal Q(3)=\{011,101,110\}$, and $\mathcal Q(6)=\{010101,101010,$\\$011011,101101,110110\}$.
Theorem \ref{thm:1bstructure} proves that $\mathcal W^{\{1,b\}}=\{X^\infty\mid X\in \mathcal Q(b+1)\}$.
We  can also provide an explicit enumeration of the possible sequences.

\begin{theorem}\label{thm:1bnumber}
Let $\phi$ be the plastic constant and $z,\bar z$ be the other two complex roots of $x^3-x-1$, and let $Q(\ell) :=\phi^\ell+z^\ell+{\bar z}^\ell$. Then the number of distinct sequences $\{w^{\{1,b\},S}\}$ over all seeds $S$ is
\begin{equation}
\big|\mathcal W^{\{1,b\}}\big|=\abs{\mathcal Q(1+b)}=Q(1+b).
\end{equation}
\end{theorem}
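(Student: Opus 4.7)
The plan is to prove the two equalities in order, with most of the work concentrated on the enumeration identity. For the first equality $|\mathcal{W}^{\{1,b\}}| = |\mathcal{Q}(b+1)|$, I would invoke the identity $\mathcal{W}^{\{1,b\}} = \{X^\infty : X \in \mathcal{Q}(b+1)\}$ stated just after Theorem \ref{thm:1bstructure} (via the reformulation allowing length-$(b+1)$ words with arbitrary subperiod), and then check that the map $X \mapsto X^\infty$ on $\mathcal{Q}(b+1)$ is a bijection onto $\mathcal{W}^{\{1,b\}}$. Surjectivity is immediate from the identity, and injectivity is clear because two distinct words of the same length $b+1$ must disagree at some index $i \in \{0,\ldots,b\}$, at which the two corresponding infinite sequences also differ.

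For the second equality $|\mathcal{Q}(\ell)| = Q(\ell)$, the first step is to recast the block-decomposition definition as a purely cyclic local constraint: a binary word $X$ of length $\ell$ lies in $\mathcal{Q}(\ell)$ if and only if, read cyclically, $X$ contains neither the factor $00$ nor the factor $111$. The forward direction is immediate since concatenations of $01$ and $011$ (and their rotations) produce no such factor even at the wrap-around; conversely, any cyclic word avoiding $00$ and $111$ must contain at least one $0$ (the word $1^\ell$ is forbidden for $\ell \geq 3$, and small cases are handled directly), and cutting the cyclic word immediately before any chosen $0$ yields a linear decomposition into $01$ and $011$ blocks.

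I would then apply the transfer-matrix method with states equal to the admissible overlapping two-letter windows $01, 10, 11$ (state $00$ is forbidden). The constraint forces exactly the transitions $01 \to 10$, $01 \to 11$, $10 \to 01$, and $11 \to 10$, giving
\[
M = \begin{pmatrix} 0 & 1 & 1 \\ 1 & 0 & 0 \\ 0 & 1 & 0 \end{pmatrix}.
\]
Cyclic words of length $\ell$ satisfying the constraint are in bijection with closed walks of length $\ell$ in this state graph, so $|\mathcal{Q}(\ell)| = \operatorname{tr}(M^\ell)$. Expanding $\det(xI - M)$ gives the characteristic polynomial $x^3 - x - 1$, whose roots are exactly $\phi, z, \bar{z}$, and hence $\operatorname{tr}(M^\ell) = \phi^\ell + z^\ell + \bar{z}^\ell = Q(\ell)$.

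The main subtlety I expect is justifying the bijection between cyclic words and closed walks carefully, in particular handling small $\ell$ where there may be fewer distinct overlapping windows than the state graph nominally suggests; I would corroborate the formula against the listed values $|\mathcal{Q}(2)| = 2$, $|\mathcal{Q}(3)| = 3$, $|\mathcal{Q}(6)| = 5$, which agree with $Q(2) = 2$, $Q(3) = 3$, $Q(6) = 5$ computed from the Newton-style recurrence $Q(n+3) = Q(n+1) + Q(n)$ with initial values $Q(1) = 0$, $Q(2) = 2$, $Q(3) = 3$.
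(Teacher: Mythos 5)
Your proposal is correct, but it reaches the identity $|\mathcal Q(\ell)|=Q(\ell)$ by a genuinely different route than the paper. The paper's proof classifies the strings of $\mathcal Q(\ell)$ into four types according to their first and last characters, derives mutual recurrences among the counts $Q_1,\ldots,Q_4$, concludes the Perrin recurrence $Q(\ell)=Q(\ell-2)+Q(\ell-3)$, and then must still identify \emph{which} solution of that recurrence it has: it computes the values $Q(1)=0$, $Q(2)=2$, $Q(3)=3$, extrapolates $Q(0)=3$, and uses $\phi+z+\bar z=0$ together with realness to pin down the coefficients $c_1=c_2=c_3=1$ in the general solution $c_1\phi^\ell+c_2z^\ell+c_3\bar z^\ell$. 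You instead recast $\mathcal Q(\ell)$ as the set of cyclic binary words avoiding the factors $00$ and $111$ (this equivalence is sound, and is in fact the same characterization the paper exploits later in its correspondence with maximal independent sets of $C_{b+1}$), and then count such words as closed walks in the transfer graph on states $01,10,11$; your transition list and matrix $M$ are correct, $\det(xI-M)=x^3-x-1$, and the power-sum identity $\Tr(M^\ell)=\phi^\ell+z^\ell+\bar z^\ell$ then gives the result with no initial conditions or coefficient-solving at all --- the trace explains structurally why the answer is exactly the power sum of the roots rather than some other solution of the recurrence. The wrap-around worry you flag for small $\ell$ dissolves once cyclic factors are read with indices modulo $\ell$ (equivalently, as factors of $X^\infty$): for instance $1^\ell$ then contains $111$ for every $\ell\geq 1$, matching the absence of a loop at state $11$, and your checks $\Tr(M)=0=Q(1)$, $\Tr(M^2)=2=Q(2)$, $\Tr(M^6)=5=Q(6)$ confirm the bookkeeping. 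Your handling of the first equality, via injectivity of $X\mapsto X^\infty$ on words of the fixed length $b+1$ together with the surjectivity supplied by Theorem \ref{thm:1bstructure}, is also slightly more explicit than the paper, which asserts that bijection in passing. In short: the paper's argument is more elementary (no linear algebra), while yours is shorter at the decisive step and generalizes more readily to other local constraints.
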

\begin{proof}
To find a recurrence relation, we consider the four possible forms for the period structure $X$ to take, and how each can be reduced to a different form. We will then add the sequences which count each of these forms.
\begin{align*}
Q_1(\ell)&&&X=0\ldots 01 &&Q_1(\ell-2)+Q_2(\ell-2)&&\text{by removing the last two numbers}\\
Q_2(\ell)&&&X=0\ldots 11 && Q_1(\ell-1)&&\text{by removing the last number}\\
Q_3(\ell)&&&X=1\ldots 01 && Q_2(\ell)&&\text{by rotating one character left}\\
Q_4(\ell)&&&X=1\ldots 10 && Q_1(\ell)+Q_2(\ell)&&\text{by rotating one character right}
\end{align*}
\[Q(\ell)=Q_1(\ell)+Q_2(\ell)+Q_3(\ell)+Q_4(\ell)=3Q_2(\ell)+2Q_1(\ell)=3Q_1(\ell-1)+2Q_1(\ell).\] We note $Q_1(\ell)=Q_1(\ell-2)+Q_1(\ell-3)$, so to follows that $Q(\ell)=Q(\ell-2)+Q(\ell-3)$. This is a linear recurrence relation, meaning $Q(\ell)$ is a linear combination of the form  $c_1\phi^\ell+c_2z^\ell+c_3{\bar z}^\ell$, where $\phi,$ $z$, and $\bar z$ are the roots of $x^3-x-1$. We give initial conditions $Q(1)=0$, $Q(2)=|\{01,10\}|=2$, and $Q(3)=|\{011,101,011\}|=3$.

To find the coefficients we compute that $Q(0)=3$, so $c_1+c_2+c_3=3$. Additionally, $Q$ is real so $c_2=c_3$. Finally, we know that $x^3-x-1=(x-\phi)(x-z)(x-\bar z)=x^3-(\phi+z+\bar z)x^2+\ldots$, so $\phi+z+\bar z=0=Q(1)$. Therefore $c_1=c_2=c_3=1$.
\end{proof}
$Q(\ell)$ is in the OEIS, known as the Perrin sequence \cite[A001608]{oeis}. 

\subsubsection{Distinct Periodicities.}\label{sec:distinctperiodicities}
If we analyze $\mathcal W^{\{1,5\}}$, or equivalently $\mathcal Q(6)$, we find that all of the sequences `look like' some form of $(01)^\infty$ or $(011)^\infty$, with some initial shift that we call a `rotation'. This is true for many sequences, motivating a formal notion of similarity between period structures.
\begin{definition}
We call two strings $X_1,X_2$ similar if they have a common subperiod, i.e.  $X_1^{k_1}=X_2^{k_2}$ for $k_1,k_2\geq 1$, or if $X_2$ is some rotation of $X_1$, i.e. $|X_1|=|X_2|=\ell$ and $X_2(n)=X_1(n+x\pmod\ell)$. We define $\simeq$ to be the equivalence relation generated by these two criteria. We say $X$ and $Y$ are distinct if $X\not\simeq Y$.
\end{definition}
\begin{definition}
We denote $\mathcal Q/\!\simeq$ as the set of equivalence classes of $\mathcal Q$ under $\simeq$. The length of a class $[Y]$ is the length of its smallest element, i.e. the smallest subperiod of $Y$ or the period length of $Y^\infty$.
\end{definition}
Recall that $\mathcal Q(\ell)$ is the set of all strings in $\mathcal Q$ with length $\ell$, which is in bijection with $\mathcal W^{\{1,\ell-1\}}$. Therefore $\mathcal Q(\ell)/\simeq$ is the set of distinct periodicites in $\mathcal Q(\ell)$, so
\begin{equation}\label{eqdef:w/sim}
\mathcal W^{\{1,b\}}/\!\simeq\ \ \ =\ \ \big\{X^\infty, [X]\in \mathcal Q(\ell)/\simeq\big\}
\end{equation}
\begin{definition}
We define $(\mathcal Q/\!\simeq)(\ell)$ to be set of distinct periodicities of $\mathcal Q/\!\simeq$ with length $\ell$. Similarly $(\mathcal W^A/\!\simeq)(\ell)$ is the set of distinct periodicites in $\mathcal W^A/\!\simeq$ with length $\ell$.
\end{definition}
It follows that
\begin{equation}\label{eqdef:w/sim(l)}
(\mathcal W^{\{1,b\}}/\!\simeq)(\ell)\ \ \ =\ \ \big\{X^\infty, [X]\in \mathcal (Q/\simeq)(\ell)\big\}.
\end{equation}
This implies that $p\in \mathcal P^{\{1,b\}}$ if and only if $p\mid (1+b)$ and $\abs{(\mathcal Q/\!\simeq)(p)}> 0$, as means there is some non-periodic string $X\in \mathcal Q$ with length $|X|=p$ and thus $X^\infty\in (\mathcal W^{\{1,b\}}/\!\simeq)(p)$. 

\begin{example}
If we want to find the distinct periodicities of length three, six, and eight, we observe that $(\mathcal Q/\!\simeq)(3)=\{[011]\}$, $(\mathcal Q/\!\simeq)(6)=\emptyset$, and $(\mathcal Q/\!\simeq)(8)=\{[01101101]\}$. In contrast, if we wish to know all distinct periodicites of $\{1,2\}$, $\{1,5\}$, and $\{1,7\}$, we instead consult $(\mathcal Q(3)/\!\simeq)=\{[011]\}$, $(\mathcal Q(6)/\!\simeq)=\{[(01)^3],[(011)^2]\}$, and $(\mathcal Q(8)/\!\simeq)=\{[(01)^4],[01101101]\}$.
\end{example}
It follows that because the periodicites of $\mathcal Q(\ell)$ must have some length $d\mid \ell$, we can enumerate $\mathcal Q(\ell)/\simeq$ by partitioning over period length. The following is a natural result of Equations \ref{eqdef:w/sim} and \ref{eqdef:w/sim(l)}.
\begin{equation}\label{eq:sumoverdivisors}
|\mathcal W^{\{1,\ell-1\}}/\simeq|=\sum_{d\mid \ell}|(\mathcal W^{\{1,\ell-1\}}/\simeq)(d)|\hspace{20pt}=\hspace{20pt}
|\mathcal Q(\ell)/\simeq|=\sum_{d\mid \ell}|(\mathcal Q/\simeq)(d)|
\end{equation}
In the rest of this section we will enumerate the sets $\mathcal W^{\{a,b\}}$, $\mathcal W^{\{a,b\}}/\!\simeq$, and $(\mathcal W^{\{a,b\}}/\!\simeq)(\ell)$ for all $\{a,b\}$. Recall that Theorem \ref{thm:1bnumber} gives $|\mathcal W^{\{1,b\}}|=Q(n)$.
\begin{proposition}\label{prop:distinctperiodicities}
Define
\begin{equation}\label{eq:distinctperiodicities}
N'(\ell):=\frac{\displaystyle Q(\ell)-\sum_{d!|\ell}d\cdot N'(d)}{\ell}\hspace{30pt}
\ncount (L):=\sum_{\ell\mid L}N'(\ell),
\end{equation}
where $d!|\ell$ represents proper divisors of $\ell$.
Then the number of distinct periodicities of $\{1,b\}$ of length $\ell\mid (1+b)$ is $|(\mathcal W^{\{1,b\}}/\!\simeq)(\ell)|=N'(\ell)$, and the total number of distinct perioditicites is $|\mathcal W^{\{1,b\}}/\!\simeq|=\ncount (b+1)$.
\end{proposition}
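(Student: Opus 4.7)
The plan is to count $\mathcal{Q}(\ell)$ by stratifying it according to minimal period, then to count rotation orbits of primitive strings, using a canonical block decomposition of elements of $\mathcal{Q}$. The first step is the \emph{unique parsing property}: every concatenation of $01$'s and $011$'s has block boundaries exactly at the positions of its $0$'s, because every block contains precisely one $0$ (at its start). Consequently, if some rotation of $X$ is a concatenation of $01$ and $011$, the parsing is forced by the locations of $0$'s in $X$.

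From this I would derive the key closure lemma: for any string $Y$ of length $d$ and any $k \ge 1$, $Y \in \mathcal{Q}(d)$ if and only if $Y^k \in \mathcal{Q}(dk)$. The $\Rightarrow$ direction is immediate from the definition; for $\Leftarrow$, if some rotation of $Y^k$ is a concatenation of blocks, then the block boundaries (the $0$-positions) are periodic with period $d$, so cutting out the length-$d$ window starting at a block boundary exhibits a rotation of $Y$ as a concatenation of $01$ and $011$. Letting $P(d)$ denote the number of primitive strings of length $d$ in $\mathcal{Q}$, the closure lemma lets me write each $X \in \mathcal{Q}(\ell)$ uniquely as $Y^{\ell/d}$ with $Y$ primitive of length $d \mid \ell$ lying in $\mathcal{Q}(d)$, yielding
\[
Q(\ell) \;=\; \sum_{d \mid \ell} P(d).
\]

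Next I would analyze the $\simeq$-classes of primitive strings. A primitive string $Y$ of length $d$ has exactly $d$ distinct rotations, all contained in $\mathcal{Q}$ (which is rotation-closed by its very definition). Unwinding the definition of $\simeq$, any chain starting from a primitive $Y$ reaches only rotations of powers of $Y$, and among those the primitive ones are precisely the $d$ rotations of $Y$. Hence each class in $(\mathcal{Q}/{\simeq})(d)$ contains exactly $d$ primitive representatives, so $P(d) = d \cdot N'(d)$. Substituting gives $Q(\ell) = \sum_{d \mid \ell} d \cdot N'(d)$, and isolating the $d = \ell$ term produces exactly the recurrence defining $N'(\ell)$ in \eqref{eq:distinctperiodicities}; this establishes $|(\mathcal{W}^{\{1,b\}}/{\simeq})(\ell)| = N'(\ell)$ for each $\ell \mid (b+1)$ via the bijection from Theorem \ref{thm:1bstructure}.

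For the total count, Theorem \ref{thm:1bstructure} identifies $\mathcal{W}^{\{1,b\}}/{\simeq}$ with $\mathcal{Q}(b+1)/{\simeq}$, and every class in $\mathcal{Q}(b+1)/{\simeq}$ has a unique primitive representative whose length $d$ divides $b+1$. Partitioning by $d$ and applying \eqref{eq:sumoverdivisors} gives
\[
\bigl|\mathcal{W}^{\{1,b\}}/{\simeq}\bigr| \;=\; \sum_{d \mid (b+1)} N'(d) \;=\; \ncount(b+1).
\]
The main obstacle is the closure-under-powers lemma — once that is secured, the enumeration is essentially a bookkeeping exercise — and that lemma in turn reduces to the unique-parsing observation about concatenations of $01$ and $011$.
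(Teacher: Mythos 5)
Your argument is correct and follows essentially the same route as the paper's proof: stratify $\mathcal Q(\ell)$ by minimal period, observe that each class in $(\mathcal Q/\!\simeq)(d)$ contributes exactly $d$ rotations, obtain $Q(\ell)=\sum_{d\mid\ell}d\cdot N'(d)$, and rearrange; the total count then follows from Equation \ref{eq:sumoverdivisors}. The only difference is that you explicitly isolate and prove the closure-under-powers step ($Y\in\mathcal Q(d)$ iff $Y^k\in\mathcal Q(dk)$, via the unique-parsing observation that block boundaries sit at the $0$'s), which the paper's proof uses implicitly when it asserts $[Z]\in(\mathcal Q/\!\simeq)(p)$ and that all $p$ rotations give representatives in $\mathcal Q(\ell)$ --- a worthwhile clarification, but not a different method.
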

\begin{proof}
We will show $N'(\ell)=|(\mathcal Q/\simeq)(\ell)|$. First we consider all strings $X\in \mathcal Q(\ell)$. In particular suppose $X$ has period $p$, so $\ell/p=k$ and $X=Z^k$ for some $Z$ having no subperiod, i.e. $[Z]\in (\mathcal Q/\simeq)(p)$. Because $Z$ has no subperiod, it has $p$ different rotations and therefore $p$ representatives $X_1,\ldots, X_p\in \mathcal Q(\ell)$. Since $p$ can be any divisor of $\ell$, we conclude $|\mathcal Q(\ell)|=\sum_{p\mid \ell}p|(\mathcal Q/\simeq)(p)|$/ We can rearrange this to conclude $|(\mathcal W^{\{1,b\}}/\!\simeq)(\ell)|=|(\mathcal Q/\simeq)(\ell)|=N'(\ell)$ as written in Equation \ref{eq:distinctperiodicities}. Note that we do not need a base case to compute $N'$ explicitly because $1$ has no proper divisors. Equation \ref{eq:sumoverdivisors} implies $|\mathcal W^{\{1,b\}}/\!\simeq|=N(b+1)$.
\end{proof}
Note that $N'(\ell)$ does not depend on $b$, and in fact the set of distinct periodicities of length $\ell$ is equal for any $b$ as long as $\ell\mid (1+b)$. The OEIS contains sequences 
$N'(\ell)$ \cite[A113788]{oeis} and $\ncount (L)$ \cite[A127687]{oeis}, motivating an interesting bijection.

It is known that $Q(\ell)$ counts the maximal independent sets in vertex labeled cycles $C_{\ell}$ (see, for example, Example 1.2 in \cite{Furedi1987}).
In 2007, Bisdorff et. al demonstrated that $\ncount (b+1)$ counts the number of unlabeled maximal independent sets of the cycle $C_{b+1}$  \cite{independentsetsofcycle}. The correspondence between a binary sequence $Y$ and a vertex set $S\subseteq V(C_{b+1})$ is simple; we include the $i\nth$ vertex in $S$ exactly when $Y(i)=0$. For example $\mathcal Q(10)\ni (01)^2(011)^2\mapsto \{1,3,5,8\}\subseteq V(C_{10})$. The conditions for a string to be valid are equivalent to those for a maximal independent set. We cannot have two adjacent vertices of $S$ by independence, and $Y$ cannot have two consecutive zeros by Theorem \ref{thm:1bstructure}. We also cannot have three adjacent non-vertices in $S$ by maximality, or else we could add the middle vertex to $S$. Similarly $Y$ cannot have three consecutive ones. By our construction, it is clear that $\ncount $ counts these sets in $C_{b+1}$ up to rotation, but not reflection. For example, the sequence $(01)(011)(01)^2(011)(01)^3(011)\not \simeq (01)^3(011)(01)^2(011)(01)(011)$ because one cannot be rotated to the other, so they belong to different equivalence classes of $\mathcal Q/\simeq$. Thus, $\ncount(21)$ will count them both. However a reflection automorphism of $C_{21}$ would map the corresponding independent sets to each other. Moving forward, Table \ref{tab:enumeratingab} shows the sequences $Q(\ell)$, $N'(\ell)$, and $\ncount (\ell)$ for $\ell\in \{0,\ldots, 20\}$. 

\subsection{Generalizing to $\{a,b\}$}\label{sec:gentoab}

We will use a simple multiplicative permutation of $w^{A,S}$ to reduce every $\{a,b\}$ case to a version of $\{1,b'\}$. This will induce the strict structure of Theorem \ref{thm:1bstructure} on the seemingly complex periodicities of the $\{a,b\}$ case. We will generally assume that $a,b$ are coprime. If not, we can divide out $g=\gcd(a,b)$ and then find $g$ parallel copies of sequences $(w)_i$ for the coprime set $A=\{a/g,b/g\}$ using Multiplicative Linearity Proposition \ref{prop:linearity}.

\begin{definition}
Given some $a,b$ coprime, define the permutation $\sigma_{a,a+b}:\{0,1\}^{a+b}\biject\{0,1\}^{a+b}$ by $\sigma_{a,a+b}(Y)=X$, where $X(n)=Y(an\pmod {a+b})$ for all $n\in \{0,\ldots a+b-1\}$.
\end{definition}
Because $a$ and $a+b$ are coprime, $\sigma_{a,a+b}(Y)$ is a permutation of the string $Y$ for all $Y\in \{0,1\}^{a+b}$. This means $\sigma_{a,a+b}$ is invertable, so it is a permutation of the collection $\{0,1\}^{a+b}$.

\begin{theorem} [Bijection]\label{thm:2setreduction}
Suppose $A=\{a,b\}$ with $a,b$ coprime and let $A'=\{1,a+b-1\}$. For any string $Y$ with $|Y|=a+b$, let $X=\sigma_{a,a+b}(Y)$. Then $Y^\infty \in \mathcal W^A$ if and only if $X^\infty \in \mathcal W^{A'}$.
\end{theorem}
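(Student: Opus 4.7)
The plan is to translate both conditions $Y^\infty\in\mathcal{W}^A$ and $X^\infty\in\mathcal{W}^{A'}$ into purely local, cyclic conditions on strings of length $a+b$, and then observe that the bijection $\sigma_{a,a+b}$ carries one into the other. The key algebraic fact driving the argument is that $b\equiv -a\pmod{a+b}$ and, symmetrically, $a+b-1\equiv -1\pmod{a+b}$, so both recurrences collapse to nearest-neighbor conditions on a cyclic group of order $a+b$.

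First I would use Theorem~\ref{thm:abperiod}: every element of $\mathcal{W}^A$ has period dividing $a+b$ and no preperiod, so $Y^\infty\in\mathcal{W}^A$ is equivalent to $Y$ (read cyclically) satisfying the recurrence
\[
Y(m) = 0 \iff Y(m-a) = Y(m-b) = 1 \qquad \text{for every } m\in\Z/(a+b)\Z.
\]
Using $b\equiv -a\pmod{a+b}$, this rewrites as the symmetric two-neighbor condition $Y(m)=0 \iff Y(m-a)=Y(m+a)=1$. Applying the identical reasoning to $A'=\{1,a+b-1\}$ and using $a+b-1\equiv -1$, the condition $X^\infty\in\mathcal{W}^{A'}$ becomes $X(n)=0\iff X(n-1)=X(n+1)=1$ for every $n\in\Z/(a+b)\Z$.

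Next, I substitute $X(n)=Y(an\bmod (a+b))$; then $X(n\pm 1)=Y(an\pm a\bmod(a+b))$, so the $A'$-condition becomes ``$Y(an)=0 \iff Y(an-a)=Y(an+a)=1$'' for every $n$. Since $\gcd(a,a+b)=1$, the map $n\mapsto an$ permutes $\Z/(a+b)\Z$, so as $n$ ranges over all residues the quantity $m=an$ does as well, and we recover precisely the $A$-condition on $Y$. The two conditions are therefore equivalent, proving the bijection.

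The main subtlety, and the only point requiring more than routine unfolding, is the first equivalence: that $Y^\infty\in\mathcal{W}^A$ really does amount to $Y$ satisfying recurrence~\eqref{eq:recurrence} cyclically. The $\Rightarrow$ direction is immediate from Theorem~\ref{thm:abperiod}; for $\Leftarrow$, if $Y$ satisfies the cyclic recurrence, one takes $S$ to be the last $b$ entries of $Y$ as a seed, and the generated sequence $\{w^{A,S}\}$ then coincides with $Y^\infty$ because both satisfy recurrence~\eqref{eq:recurrence} starting from the same initial vector $\vec v(0)$. With that in hand, everything reduces to the short modular arithmetic above.
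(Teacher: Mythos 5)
Your proposal is correct and follows essentially the same route as the paper: both reduce the statement to showing that the cyclic recurrence conditions on $Y$ and $X$ over $\Z/(a+b)\Z$ are carried into each other by $\sigma_{a,a+b}$, using $b\equiv -a\pmod{a+b}$ and the invertibility of multiplication by $a$, with the seed taken from the string itself to handle membership in $\mathcal W^A$. The only cosmetic difference is that you substitute $m=an$ where the paper writes the same computation in terms of $a\inv n$.
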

\begin{proof}
It suffices to show that $Y^\infty$ obeys the recurrence relation if and only if $X^\infty$ does, as we could then use the seeds $Y^{(a+b)/|Y|}$ and $X^{(a+b)/|X|}$ respectively. Let $a\inv $ be the multiplicative inverse of $a\pmod{a+b}$. This means $Y^\infty(n)=X^\infty(a\inv n\pmod{a+b})$, so $Y=\sigma_{a\inv, a+b}(X)$. By leveraging the periodicity of $Y^\infty$ and $X^\infty$ over $a+b$, and noting that $b\equiv -a\pmod{a+b}$, we get
\begin{align*}
Y^\infty(n)&&&=X^\infty(a\inv n)\\
Y^\infty(n-a)&=X^\infty(a\inv n-a\inv a)&&=X^\infty(a\inv n-1)\\
Y^\infty(n-b)&
=X^\infty(a\inv n-a\inv (-a))
=X^\infty(a\inv n+1)
&&=X^\infty(a\inv n-(a+b-1))
\end{align*}
Therefore $Y^\infty$ follows the recurrence relation of $A=\{a,b\}$ if and only if $X^\infty$ follows the recurrence relation of $A'=\{1,a+b-1\}$.
\end{proof}
Theorem \ref{thm:2setreduction} implies that the set of all period structures of $\{a,b\}$ can be obtained by applying the inverse permutation $\sigma_{a,a+b}\inv =\sigma_{a\inv ,a+b}$ to each period structure of $\{1,a+b-1\}$. Because $\sigma_{a,a+b}$ is a permutation of $\{0,1\}^{a+b}$, it bijects the set $\mathcal W^{\{a,b\}}$ of all $\{w^{\{a,b\},S}\}$ sequences with the set $\mathcal W^{\{1,a+b-1\}}$. In particular, we conclude precisely that if $a$ and $b$ are coprime, then
\begin{equation}
\mathcal W^{\{a,b\}}=\{Y^\infty \mid Y=\sigma_{a\inv ,a+b}(X),\ X\in \mathcal Q(a+b)\},
\end{equation}
or informally $\mathcal W^{\{a,b\}}=\sigma_{a\inv, a+b}\left[\mathcal W^{\{1,a+b-1\}}\right]$. Therefore Theorem \ref{thm:2setreduction} implies that we can generalize the results of Section \ref{sec:1,b}.
\begin{corollary}
Choose any $A=\{a, b\}=\{\tilde ag,\tilde bg\}$, where $g=\gcd(a,b)$. Then $|\mathcal W^A|=(Q(\tilde a+\tilde b))^g$.
\hfill $\square$
\end{corollary}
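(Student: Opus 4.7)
The plan is to reduce the general case $\gcd(a,b) = g > 1$ to the coprime case already established in Theorems \ref{thm:2setreduction} and \ref{thm:1bnumber}, using Multiplicative Linearity (Proposition \ref{prop:linearity}) to decouple the game into $g$ independent parallel copies indexed modulo $g$.

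First I would write $A = gA'$ with $A' = \{\tilde a, \tilde b\}$ and $\gcd(\tilde a, \tilde b) = 1$. Since $\alpha := \max(A) = g\tilde b = g\cdot\max(A')$, every seed $S \in \{0,1\}^\alpha$ has length exactly $g\cdot\max(A')$, matching the hypothesis of Proposition \ref{prop:linearity} with $k = g$. Applying that proposition, $\{w^{A,S}\}$ splits into $g$ parallel sequences $(w^{A,S})_i$ for $i \in \{0,\ldots,g-1\}$, each of which satisfies $(w^{A,S})_i = \{w^{A',S_i}\}$, where $S_i \in \{0,1\}^{\tilde b}$ is defined by $S_i(m) := S(mg+i)$.

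Next I would establish a bijection $\Phi \colon \mathcal W^A \to (\mathcal W^{A'})^g$ sending the sequence $\{w^{A,S}\}$ to the $g$-tuple of its parallel components. Surjectivity comes from reversing the decomposition: given any tuple $(\tau_0,\ldots,\tau_{g-1}) \in (\mathcal W^{A'})^g$ with representative seeds $\tau_i = \{w^{A',T_i}\}$, the interleaved seed defined by $S(mg+i) := T_i(m)$ has length $g\tilde b = \alpha$, and Proposition \ref{prop:linearity} then gives $(w^{A,S})_i = \tau_i$ for each $i$. Injectivity (and well-definedness, independent of the choice of seed $S$ representing a given sequence) is immediate because $w^{A,S}(mg+i) = (w^{A,S})_i(m)$ reconstructs the full sequence pointwise from its parallel components. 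With $\Phi$ bijective, I get $|\mathcal W^A| = |\mathcal W^{A'}|^g$, and then Theorems \ref{thm:2setreduction} and \ref{thm:1bnumber} evaluate $|\mathcal W^{A'}| = |\mathcal W^{\{1,\tilde a + \tilde b - 1\}}| = Q(\tilde a + \tilde b)$, finishing the count.

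The main subtlety, minor as it is, lies in seed-length bookkeeping: one must verify that the total length $\alpha = g\tilde b$ of an $A$-seed lines up exactly with the $g$ parallel $A'$-seeds of length $\tilde b = \max(A')$ each, so that the interleaving construction and the parallel decomposition are inverses of each other at the level of sequences. Once this matching is in place, the corollary follows formally from Proposition \ref{prop:linearity} and the coprime-case enumeration.
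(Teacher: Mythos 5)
Your proposal is correct and follows essentially the same route as the paper: the paper's (very terse) justification likewise invokes Multiplicative Linearity (Proposition \ref{prop:linearity}) to split each $\{w^{A,S}\}$ into $g$ independent parallel $\{\tilde a,\tilde b\}$-games and then counts $g$-tuples of the $Q(\tilde a+\tilde b)$ coprime-case sequences. Your explicit verification of the bijection (interleaving seeds for surjectivity, pointwise reconstruction for injectivity, and the seed-length bookkeeping $\alpha=g\tilde b$) only fills in details the paper leaves implicit.
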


The power of $g$ follows from Linearity Proposition \ref{prop:linearity}. Each of the independent parallel sequences $(w^{A,S})_i(m)$ for $i\in \{0,\ldots,g-1\}$ is equal to some $\{\tilde a,\tilde b\}$ game. Thus we can count $g$-tuples of strings in $\sigma_{\tilde a\inv,\tilde a+\tilde b}\left[\mathcal Q(a+b)\right]$ to arrive at the total, which we formalize in Theorem \ref{thm:perioidicitieswithgcd}. Next, we find that $\sigma_{a,a+b}$ also bijects the classes of distinct periods.

\begin{corollary}\label{cor:permconservessubper}
Let $a,b$ be coprime and $A=\{a,b\}$. Then $|\mathcal W^{A}/\!\simeq(\ell)|=N'(\ell)$ is the number of distinct periodicities of length $\ell\mid(a+b)$, and $|\mathcal W^{A}/\!\simeq|=\ncount (a+b)$ is the total number of distinct periodicities of $A$.
\end{corollary}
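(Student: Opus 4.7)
The plan is to transport the enumeration of Proposition \ref{prop:distinctperiodicities} from the $\{1, a+b-1\}$ case to the $\{a,b\}$ case via the bijection $\sigma = \sigma_{a,a+b}$ provided by Theorem \ref{thm:2setreduction}. Proposition \ref{prop:distinctperiodicities} already gives $|(\mathcal W^{\{1, a+b-1\}}/\!\simeq)(\ell)| = N'(\ell)$, so the substantive content is to show that $\sigma$ descends to a bijection on the quotients $/\!\simeq$ that preserves the length (minimal period) of each equivalence class; the corollary then follows by summing over divisors of $a+b$ via Equation \ref{eq:sumoverdivisors}.

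First I would handle the equivalence. On length-$(a+b)$ strings, the ``common subperiod'' generator in the definition of $\simeq$ only links strings sharing the same fundamental period, and a rotation of that fundamental lifts to a rotation of the full length-$(a+b)$ string, so $\simeq$ restricted to $\mathcal W^A$ reduces to ordinary rotation equivalence. If $Y_2(n) \equiv Y_1(n + x) \pmod{a+b}$, then with all arithmetic taken modulo $a+b$,
\[
\sigma(Y_2)(n) = Y_2(an) = Y_1(an + x) = \sigma(Y_1)(n + a\inv x),
\]
so $\sigma(Y_2)$ is a rotation of $\sigma(Y_1)$. Applying the same observation to $\sigma\inv = \sigma_{a\inv, a+b}$ shows that $\sigma$ descends to a bijection on the quotients.

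Next I would verify length preservation. Extending to periodic sequences gives $\sigma(Y)^\infty(n) = Y^\infty(an \bmod (a+b))$. If $Y^\infty$ has minimal period $p \mid (a+b)$, then the minimal period of $\sigma(Y)^\infty$ is the least positive $q$ with $p \mid aq$, which equals $p/\gcd(a,p)$. The coprimality hypothesis gives $\gcd(a, a+b) = \gcd(a,b) = 1$, so $\gcd(a,p) = 1$ for every divisor $p$ of $a+b$, forcing the minimal period of $\sigma(Y)^\infty$ to remain $p$. By symmetry via $\sigma\inv$, minimal period is preserved exactly, so $\sigma$ restricts to a bijection $(\mathcal W^{\{1,a+b-1\}}/\!\simeq)(\ell) \to (\mathcal W^A/\!\simeq)(\ell)$ for each $\ell \mid (a+b)$, and the two claimed formulas then follow immediately.

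The main obstacle is this length-preserving step: I must carefully justify that $\simeq$ on length-$(a+b)$ strings really collapses to rotation equivalence (so that the pointwise rotation argument suffices), and then track the coprimality bookkeeping---specifically that $\gcd(a,p) = 1$ for \emph{every} divisor $p$ of $a+b$---which is exactly what prevents $\sigma$ from inadvertently producing a shorter period under the multiplicative twist.
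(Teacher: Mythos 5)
Your proposal is correct and follows essentially the same route as the paper: both reduce the corollary to showing that $\sigma_{a,a+b}$ carries rotations to rotations and preserves minimal periods (equivalently, subperiods), with the key fact being $\gcd(a,p)=1$ for every $p\mid(a+b)$, and then import the counts $N'(\ell)$ and $\ncount(a+b)$ from the $\{1,a+b-1\}$ case. A minor point in your favor: your computed shift $a\inv x$ is the correct one (the paper writes the shift as $ar$), though either form suffices since only its nonvanishing modulo $\ell$ matters.
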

\begin{proof}
It suffices to show that $\sigma_{a,a+b}$ preserves rotational symmetries and subperiods. Suppose some sequences $Y$ and $Y'$ are rotated copies of each other, i.e. there is some $r$ such that $Y(n)=Y'(n+r\pmod{\ell})$ for all $n$. This implies $(\sigma Y)(n)=(\sigma Y')(n+ar\pmod{\ell})$, so $\sigma Y$ and $\sigma Y'$ are rotated copies of each other with shift $ar$. Additionally, suppose $Y$ has some subperiod, i.e. there is some $r\not \equiv0\pmod{\ell}$ such that $Y(n)=Y(n+r\pmod{\ell})$. This implies $Y$ is a rotated copy of itself, so indeed $\sigma Y$ is a rotated copy of itself with shift $ar\not\equiv 0\pmod {\ell}$, since $\ell\mid (a+b)$ and $\gcd(a,a+b)=1$. We conclude that for all $X$ and $Y$, $X\simeq Y$ if and only if $\sigma X\simeq \sigma Y$, so equivalence classes of $\simeq$ are bijected by $\sigma_{a,a+b}$.
\end{proof}

If $a$ and $b$ are not coprime, it is more complicated to count the number of distinct periodicities, but we can use a generalization of Proposition \ref{prop:distinctperiodicities}.
\begin{theorem}\label{thm:perioidicitieswithgcd}
Choose any $A=\{a,b\}=\{\tilde ag,\tilde bg\}$, where $g=\gcd(a,b)$. Define the following functions for all $g,\ell\in \N\setminus \{0\}$ and $L=g\ell$:
\begin{equation}\label{eq:periodicitieswithgcd}
N'(L,g):=\frac{\displaystyle
Q(\ell)^g-\sum_{d!|L} d\cdot N'\left(d,\gcd(d,g)\right)}{L} \hspace{30pt}
\ncount (L,g):=\sum_{p\mid L}N'(p,\gcd(p,g))
\end{equation}
Then $|\mathcal W^A/\!\simeq(p)|=N'(p,\gcd(p,a,b))$ is the number of distinct periodicities of $A$ with length $p$, and $|\mathcal W^A/\!\simeq|=\ncount (a+b,g)$ is the total number of distinct periodicities of $A$.
\end{theorem}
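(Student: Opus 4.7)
The plan is to proceed by strong induction on $L = a+b$, using Multiplicative Linearity (Proposition~\ref{prop:linearity}) plus a cyclic reduction that identifies length-$L$ sequences of shorter period with sequences of a smaller $\{a', b'\}$-game. The enabling number-theoretic identity is: for every $d \mid a+b$ with $g = \gcd(a,b)$, one has $\gcd(a, d) = \gcd(b, d) = \gcd(d, g)$. (If $h \mid a$ and $h \mid d \mid a+b$, then $h \mid b$, hence $h \mid g$; the reverse divisibility is immediate.) This identity both makes the recursive appearance $\gcd(d, \gcd(p, g)) = \gcd(d, g)$ self-consistent for $d \mid p \mid L$, and lets us identify the smaller game underlying any shorter-period sequence.

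The base count comes first: $|\mathcal{W}^A| = Q(\ell)^g$, which is exactly the unlabeled corollary following Theorem~\ref{thm:2setreduction}. The Key Lemma is then: for any $p \mid L$, the number of sequences in $\mathcal{W}^A$ of period dividing $p$ equals $Q(p/g')^{g'}$, where $g' = \gcd(p, g)$. To prove it, I would observe that such a $w$ is exactly a cyclic length-$p$ string satisfying the recurrence for the game $\{a \bmod p,\, b \bmod p\}$: periodicity makes $w(n-a)$ and $w(n-b)$ depend only on $a, b$ modulo $p$, and replicating a valid length-$p$ cyclic solution back to length $L$ trivially restores the $\{a,b\}$-recurrence. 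By the gcd identity, $(a \bmod p) + (b \bmod p) = p$ and $\gcd(a \bmod p, b \bmod p) = g'$ (with the degenerate case $p \mid g$ giving $Q(1)^p = 0$ and no solutions, as required), so the base count applied to this smaller game yields $Q(p/g')^{g'}$.

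Finally, for the class count: each class in $\mathcal{W}^A/\!\simeq$ of smallest period $p \mid L$ contributes exactly $p$ length-$L$ sequences (its distinct rotations). Partitioning the Key Lemma's count by smallest period gives
\[
Q(p/g')^{g'} \;=\; \sum_{d \mid p} d \cdot N'\!\bigl(d, \gcd(d, g)\bigr),
\]
and isolating the $d = p$ term recovers the recursive definition of $N'(p, g')$ in~\eqref{eq:periodicitieswithgcd}; summing over $p \mid L$ yields $|\mathcal{W}^A/\!\simeq| = \ncount(L, g)$. The main obstacle is the Key Lemma's cyclic reduction: one must verify that the $\{a \bmod p,\, b \bmod p\}$-recurrence on length $p$ lifts back cleanly to the $\{a, b\}$-recurrence on length $L$ under replication, and that no spurious solutions arise from the reduction collapsing $a'$ or $b'$ to $0$. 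Once the gcd identity is established, the rest is essentially M\"obius-style bookkeeping.
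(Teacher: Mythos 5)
Your argument is correct and arrives at the paper's central intermediate count, but by a genuinely different route. The paper also first computes the number of elements of $\mathcal W^{\{a,b\}}$ that are periodic over a given $p\mid(a+b)$ and finds it to be $Q(p/\gamma)^{\gamma}$ with $\gamma=\gcd(p,g)$; however, it does so by decomposing each period structure $Y$ into the $g$ parallel $\{\tilde a,\tilde b\}$-subsequences $Y_i$ of Proposition \ref{prop:linearity} and proving that periodicity over $p$ is equivalent to two explicit conditions (the first $\gamma$ subsequences are each periodic over $p/\gamma$, and the remaining $g-\gamma$ are forced translates of these), which requires some delicate index-chasing. Your Key Lemma instead reduces the whole game modulo $p$: a period-$p$ solution of the $\{a,b\}$ cyclic recurrence on $\Z_{L}$ is the same as a solution of the $\{a\bmod p,\,b\bmod p\}$ cyclic recurrence on $\Z_p$, and since $(a\bmod p)+(b\bmod p)=p$ with $\gcd(a\bmod p,\,b\bmod p)=\gcd(p,g)$ (your gcd identity, which is correct), the already-established total count $|\mathcal W^{\{a',b'\}}|=Q(p/g')^{g'}$ applies directly. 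This is shorter and conceptually cleaner, at the cost of having to justify carefully the bijection between ``elements of $\mathcal W^{\{a,b\}}$ with period dividing $p$'' and ``cyclic length-$p$ solutions,'' which you only sketch; the identification is legitimate because every element of $\mathcal W^{\{a,b\}}$ is purely periodic over $a+b$ by Theorem \ref{thm:abperiod}, so restriction to a fundamental domain and replication are mutually inverse. Two edge cases deserve an explicit line: the degenerate case $p\mid g$ (both residues vanish; you correctly note $Q(1)^p=0$ matches the absence of solutions), and the case $a\equiv b\equiv p/2\pmod p$, where the reduced ``two-element'' set collapses to the singleton $\{p/2\}$ — there the recurrence becomes $w(n)=1-w(n-p/2)$ and the count $Q(2)^{p/2}=2^{p/2}$ still agrees with $|\mathcal W^{\{p/2\}}|$, so nothing breaks, but it should be said. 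The final step — each $\simeq$-class of smallest period $d$ contributing exactly $d$ rotations, so that $Q(p/g')^{g'}=\sum_{d\mid p}d\cdot N'(d,\gcd(d,g))$, then isolating the top term — is identical to the paper's bookkeeping in Proposition \ref{prop:distinctperiodicities} and Theorem \ref{thm:perioidicitieswithgcd}, and your observation that $\gcd(d,\gcd(p,g))=\gcd(d,g)$ for $d\mid p\mid L$ is exactly what makes the recursion self-consistent.
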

\begin{proof}
First we will justify that the total number of strings in $\mathcal W^{\{a,b\}}$ which are periodic over $p$ is $\left(Q(\frac{p}{\gcd(p,g)})\right)^{\gcd(p,g)}$, not considering similarity under $\simeq$. Denote this set by $\mathcal W^{\{a,b\}}(p)$. Let $\ell=\tilde a+\tilde b$. We know by Theorem \ref{thm:abperiod} that $\{w\}$ is always periodic over $g\ell=a+b$, so for all $S$, $\{w^{A,S}\}=Y^\infty$ for some $Y$ with $|Y|=g\ell$. Using Linearity Proposition \ref{prop:linearity}, we can write $Y$ as a collection of parallel strings $Y_i$ for $i\in \{0,\ldots g-1\}$ where $|Y_i|=\ell$, and for all $m\in \{0,\ldots,\ell-1\}$, $Y_i(m)=Y(gm+i)$.
Additionally, for each $i$, $Y_i^\infty$ must satisfy the recurrence relation for $\{\tilde a,\tilde b\}$. Suppose that $Y$ is also periodic over length $p\mid \ell$, so $Y^\infty\in \mathcal W^{\{a,b\}}(p)$. Using the division algorithm let $p=xg+r$, where $r<g$ and $x\in \N$. 
Additionally, let $\gamma=\gcd(p,g)$ and $\tilde g=g/\gamma$. We will prove that the following conditions are sufficient and necessary for this to occur.
\begin{enumerate}[(i)]
\item For all $i\in \{0,\ldots,\gamma-1\}$, and all $m$, \ $\displaystyle{
Y_i(m)=Y_i(m+p/\gamma)}$
\item For all $i\in \{\gamma,\ldots, g-1\}$ and all $m$, \ $\displaystyle{Y_i(m)=\begin{cases}
Y_{i-r}(m-x)&i\geq r\\
Y_{i-r+g}(m-x-1)&i<r\\
\end{cases}}$
\end{enumerate}
We first show necessity. Assuming $Y$ is periodic over length $p$, this means for all $m, i$, we have 
\begin{align*}
Y_i(m)=Y(mg+i)&=Y(mg+i-p)=Y(mg+i-xg-r)=\begin{cases}
Y_{i-r}(m-x)&i\geq r\\
Y_{i-r+g}(m-x-1)&i<r\\
\end{cases}\\
&=Y(mg+i+\tilde g p)=Y(mg+i+g p/\gamma)=Y_i(m+p/\gamma),
\end{align*}
This proves stronger versions of (i) and (ii).

Next, we show sufficiency. Assume (i) and (ii) hold. For any $n$, use the division algorithm to get $n=mg+i$. We hope to show that $Y(n)=Y(n-p)$. Suppose that $i\in \{\gamma,\ldots,g-1\}$. Then by condition (ii),
\begin{equation}\label{eq:perswithgcdex}
Y(n)=Y_i(m)=\begin{cases}
Y_{i-r}(m-x)&i\geq r\\
Y_{i-r+g}(m-x-1)&i<r\\
\end{cases}=Y(mg-gx+i-r-gx)=Y(n-p)
\end{equation}
Alternately, suppose that $i\in\{0,\ldots,\gamma-1\}$, and consider the set of indices $i,i+r,i+2r,\ldots ,i+(\tilde g-1)r$, taken modulo $g$. We denote these $I_j\equiv i+rj\pmod g$ for $j\in \{0,\ldots, \tilde g-1\}$. Because $\gamma\mid r$, this means $I_j\equiv i\pmod \gamma$ for all $j$. Additionally, we know that the order of $r$ in the group $\Z_g^+$ is $g/\gcd(g,r)=\tilde g$, which means that $I_j\neq i$ for $j\in\{1,\ldots,\tilde g-1\}$. This means that only $I_0$ can be in the set $\{0,\ldots, \gamma-1\}$ and for all $j>0$ we must have $I_j\in \{\gamma,\ldots,g-1\}$. 

Now, let $n'=n-\tilde gp$, and consider the sequence of values $Y(n'),Y(n'+p),\ldots,Y(n'+(\tilde g-1)p)$. We know that $Y(n')=Y_i(m-p/\gamma)=Y_i(m)=Y(n)$ by condition (i). We also observe that for all $j\in \{0,\ldots,\tilde g-1\}$, we have $Y(n'+pj)=Y_{I_j}(m_j)$ for some $m_j$. Because we know $I_j\in \{\gamma,\ldots,g-1\}$, Equation \ref{eq:perswithgcdex} implies that $Y(n'+pj)=Y(n'+p(j-1))$. Therefore 
\[Y(n)=Y(n')=Y(n'+p)=\ldots =Y(n'+(\tilde g-1)p)=Y(n-p).\]
Therefore we have shown that conditions (i) and (ii) are equivalent to $Y^\infty$ being periodic over length $p$, and we now enumerate the strings satisfying these conditions. To satisfy (i) we may choose any $\gamma$-tuple of sequences in $\mathcal W^{\{\tilde a,\tilde b\}}$ which are periodic over $p/\gamma$ for $Y_0,\ldots,Y_{\gamma-1}$. To satisfy (ii) we must let $Y_\gamma,\ldots, Y_{g-1}$ be the appropriate translations of these sequences, so they are fixed. Thus we find the total number of possibilities $\abs{\mathcal W^{\{a,b\}}(p)}=(Q(p/\gamma))^\gamma$.

An argument similar to Proposition \ref{prop:distinctperiodicities} shows that $\abs{\mathcal W^{\{a,b\}}(p)}=\sum_{d\mid p}d\abs{\mathcal W^A/\!\simeq(d)}$. Letting \\$N'(d,\gcd(d,g))=\abs{\mathcal W^A/\!\simeq(d)}$, these can be rearranged to Equation \ref{eq:periodicitieswithgcd}. Note that we can also derive $N'(p,1)=N'(p)$ for all $p$, meaning that if $g=1$, we have reduced to the case of $\{a,b\}$ coprime.
\end{proof}

\begin{table}
    \centering
 \begin{tabular}{ |c|cccccccccccccccccccc| } 
 \hline
$\ell$   & 1 & 2 & 3 & 4 & 5 & 6 & 7 & 8 & 9 & 10 & 11 
& 12 & 13 & 14 & 15 & 16 & 17 & 18 & 19 & 20 \\
\hline
&&&&&&&&&&&&&&&&&&&&\\
$Q(\ell)$ & 0 & 2 & 3 & 2 & 5 & 5 & 7 & 10 & 12 & 17 & 22 & 29 & 39 & 51 & 68 & 90 & 119 & 158 & 209 & 277 \\
&&&&&&&&&&&&&&&&&&&&\\
$N'(\ell)$ & 0 & 1 & 1 & 0 & 1 & 0 & 1 & 1 & 1 & 1 & 2 & 2 & 3 & 3 & 4 & 5 & 7 & 8 & 11 & 13 \\
&&&&&&&&&&&&&&&&&&&&\\
$\ncount (\ell)$ &0&1&1&1&1&2&1&2&2&3&2&4&3&5&6&7&7&11&11&16\\
\hline
\end{tabular}
\caption{$Q(\ell)$ counts the possible sequences $\{w^{A,S}\}$ over all $S$ for $A=\{a,b\}$ if $a,b$ coprime and $a+b=\ell$. $N'(\ell)$ counts the number of distinct periodicities of length $\ell$. $\ncount (\ell)$ counts the number of distinct periodicities of any length for $A=\{a,b\}$.}
\label{tab:enumeratingab}
\end{table}
In the simpler case when $a,b$ are coprime, refer to the sequences $Q(\ell), N'(\ell),$ and $\ncount (\ell)$ in Table \ref{tab:enumeratingab}. As a result of the general enumeration we find that for all $g\mid L$, $N(L,g)=0$ exactly when $L=g$ or $L=4g$ or $(L,g)=(6,1)$. Thus
\[
\mathcal P^{\{a,b\}}=\Big\{p\ \Big|\  p|(a+b),\ p\not|\gcd(a,b),\ p\neq 4\gcd(p,a,b),\ (p,\gcd(p,a,b))\neq (6,1)\Big\}
\]
We also see that for any coprime $a,b$ and any $\ell\leq 10$, there is at most one distinct periodicity of $a$ and $b$ of length $\ell$ over all $S$. Given some $A=\{1,11k-1\}$ for $k\geq 1$, the two periodicites of length $11$ are $\mathcal (\mathcal Q/\!\simeq)(11)=\big\{\big[(01)^4(011)\big],\,\big[(01)(011)^3\big]\big\}$. This means that for any $\{a,b\}$, with $11\mid a+b$, the two periodicities are $\sigma_{a\inv ,11}\big((01)^4(011)\big)$ and $\sigma_{a\inv ,11}\big((01)(011)^3\big)$.

\begin{example}
Consider the set $A=\{3,11\}$. There are $\ncount (14)=5$ periodicites, where exactly $N'(14)=3$ have period $14$. First we write the 5 periodicities of $B=\{1,13\}$, given by \[\mathcal Q(14)/\simeq =\Big\{\left[(01)(011)^4\right],\ \left[(01)^4(011)^2\right],\ \left[(01)^3(011)(01)^1(011)\right],\ \left[((01)^2(011))^2\right],\ \left[(01)^7\right]\Big\}.\] Next, we permute these strings. $3\inv \equiv 5\pmod {14}$, so we compute $Y=\sigma_{5,14}(X)$. For the first string:
\begin{center}
\begin{tabular}{c|c|c|c|c|c|c|c|c|c|c|c|c|c|c|}
$n$&$\mathbf{0}$&$\mathbf{1}$&$\mathbf{2}$&$\mathbf{3}$&$\mathbf{4}$&$5$&$6$&$7$&$8$&$9$&$10$&$11$&$12$&$13$\\
\hline
$X$& $\mathbf{0}$&$\mathbf{1}$&$\mathbf{0}$&$\mathbf{1}$&$\mathbf{1}$&$0$&$1$&$1$&$0$&$1$&$1$&$0$&$1$&$1$\\
\hline
$\sigma(X)$&$\mathbf{0}$&$0$&$1$&$\mathbf{1}$&$1$&$0$&$\mathbf{0}$&$1$&$1$&$\mathbf{1}$&$0$&$1$&$\mathbf{1}$&$1$\\
\hline
$5n\pmod {14}$
&$\mathbf{0}$&$5$&$10$&$\mathbf{1}$&$6$&$11$&$\mathbf{2}$&$7$&$12$&$\mathbf{3}$&$8$&$13$&$\mathbf{4}$&$9$\\
\end{tabular}
\end{center}
This yields $\sigma_{5,14}\big(01(011)^4\big)=(0^21^3)^2(01^3)$. Repeating this procedure, we get the following:
\begin{align*}
\sigma_{5,14}\big(01(011)^4\big)&=&&=(0^21^3)^2(01^3)\\
\sigma_{5,14}\big((01)^4(011)^2\big)&= 01^30^31^3(01)^2&&\simeq 1^20^31^3(01)^3\\
\sigma_{5,14}\big((01)^1011(01)^3011\big)&=0^21^30^31^50&&\simeq 0^31^30^31^5\\
\sigma_{5,14}\big((01)^2011)^2\big)&=01^40^31^40^2&&\simeq (0^31^4)^2\\
\sigma_{5,14}\big((01)^7\big)&=&&=(01)^7
\end{align*}
\[
\mathcal W^{\{3,11\}}\!/\!\simeq\ \ =\Big\{\left[\big(\,(0^21^3)^2(01^3)\,\big)^\infty\right],\left[\big(\,1^20^31^3(01)^3\,\big)^\infty\right],\left[\big(\,0^31^30^31^5\,\big)^\infty\right],\left[(0^31^4)^\infty\right],\left[(01)^\infty\right]
\Big\}
\]
Note that the period lengths of $2$, $7$, and $14$ are conserved under the permutation, as shown in Corollary \ref{cor:permconservessubper}. This procedure yields all 5 periodicites of $\{3,11\}$.
\end{example}

The asympotic behavior of these functions is generally well behaved. Because $|z|=|\bar z|<1$, where $z$ and $\bar z$ are the non-real roots of $x^3-x-1$, we have the convergence $Q(\ell)=\phi^\ell+z^\ell+{\bar z}^\ell \underset{^{\ell\to\infty}}\to \phi^\ell$.\footnote{For all $\ell\geq 10$, we have the exact equality $Q(\ell)=\text{round}(\phi^\ell)$.} For the following analysis, let $A=\{a,b\}=g\{\tilde a,\tilde b\}$ with $g=\gcd(a,b)$, and assume that $\tilde a+\tilde b\geq 2.37\ln g$, which excludes relatively few sets. Then $\abs{\mathcal W^{\{a,b\}}}=Q(\tilde a+\tilde b)^g\sim \phi^{a+b}$. The set of seeds $\{S\mid S\in \{0,1\}^b\}$ has cardinality $2^b\geq \sqrt{2}^{a+b}$. Because $\phi\approx 1.32<\sqrt 2$ this means that the set of seeds grows faster than the sequences they generate, the number of seeds converging to the same sequence $\{w\}$ grows exponentially as $a+b$ increases. This approximation of $Q(\ell)$ also implies that
$N'(\ell)\sim \phi^\ell/\ell$ and $\ncount(\ell)\sim N(\ell)$. We can can also derive $\phi^L/L\sim N'(L,g)\sim \ncount (L,g)$, giving a strong estimation of the of the number of possible period structures for large $a,b$.

\section{The $\{1,b,c\}$ case}\label{sec:1,b,c}

Analogously to Section \ref{sec:1,b}, a good starting point for understanding the $\{a,b,c\}$ game is the $\{1,b,c\}$ game. This case was studied by Ho in \cite[sec.\ 2]{3elementsets}, where he solves the game for $c<4b$. We provide a full analysis by constructing $\{w^{\{1,b,c\}}\}$ for all $b$ and $c$ with no seed, where most importantly we specify the existence and structure of preperiods.

For the remainder of the paper we will use $x:\equiv y\pmod z$ to define $x$ as the least non negative remainder of $y$ modulo $z$.
\begin{theorem}\label{thm:1bc}
Suppose we have some $1<b<c$, and let $A=\{1,b,c\}$. Denote $q:=\floor{c/(b+1)}$, $r:\equiv c\pmod{b+1}$. This means $c=q(b+1)+r$. Additionally let $k:=b/2$ and $\gamma:=\frac{b-r-2}{2}$; we will only use these when they take on integer values.
\begin{center}
\begin{tabular}{|l|l|l|l|l|}
\hline
\!\!Case&Conditions&$\per(A)$&$\preper(A)$&$\{w^A\}$\\
\hline
i&$b,c$ odd,& $2$&$0$&$\left[01\right]^\infty$\\
\hline
ii&$b$ odd, $c$ even,& $b+c$&$0$&$\left[(01)^{c/2}1^{b}\right]^\infty$\\
\hline
iii&$b$ even, $c=b+1$&$2b$&$0$&$\left[(01)^k 1^b\right]^\infty$\\
\hline
iv&$b$ even, $r\in \{1,b\}$&$b+1$&$0$&$\left[(01)^k 1\right]^\infty$\\
\hline
v&$b$ even, $r>1$ odd &$b+c$&$0$&$\left[((01)^k1)^{q+1}1^{r-1}\right]^\infty$\\
\hline
vi&$b$ even, $r=b-2$&$c+1$&$0$&$\left[ ((01)^k1)^q(01)^{k-1}1\right]^\infty$\!\!\\
\hline
vii&$b$ even, $c>b+1$, $r<b-2$ even, $q> \gamma$&$c+1$&$\gamma(b+c+2)-b-1$&Equation \ref{eq:1bcpreper}\\
\hline
viii&$b$ even, $c>b+1$, $r<b-2$ even, $q< \gamma$&$b+c$&$q(b+c+2)-b-1$&Equation \ref{eq:1bcpreper}\\
\hline
ix&$b$ even, $c>b+1$, $r<b-2$ even, $q= \gamma$&$b-1$&$q(b+c+2)-a-1$&Equation \ref{eq:1bcpreper}\\
\hline
\end{tabular}
\end{center}
\end{theorem}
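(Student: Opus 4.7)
The plan is to establish the nine cases separately but within a common framework: for each case, write down the candidate sequence claimed by the theorem, verify it satisfies recurrence \eqref{eq:recurrence}, and finally confirm that the stated period and preperiod are minimal. The case split is driven by the parity of $b$ and $c$, and when $b$ is even by the residue $r$, so the bookkeeping is natural once a candidate is in hand.

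For the purely periodic cases (i)--(vi), verification reduces to the translating-zeros machinery applied with $p$ equal to the proposed period. When $p \mid b+c$, which covers cases (i), (ii), (iv) with $r=1$, and (v), Corollary \ref{cor:translating_zeros1bc} applies and we need only check that the $(b+c-1)\nth$ entry of the candidate sequence equals $1$, a single lookup. For cases (iii), (iv) with $r=b$, and (vi), the period does not divide $b+c$ (for instance in (iii), $p=2b$ and $b+c=2b+1$), so we fall back on Lemma \ref{lem:translating_zeros} in its full form: for each $x \in \{1,b,c\}$ and each $m < x$ with $w(m) = 0$, verify $w(m + p - x) = 1$. Since the candidate block is explicit, the zero positions are immediate by inspection and each of the finitely many checks amounts to locating $m + p - x \bmod p$ inside the block. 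Minimality of the period then follows by inspecting the block for a proper subperiod, which the claimed patterns visibly lack.

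For the three cases (vii), (viii), (ix), where $b$ is even, $c > b+1$, and $r < b-2$ is even, there is a genuine preperiod, and the plan is to view the sequence as built stage by stage, with stages of length on the order of $b+c+2$. I would prove by strong induction on $n$ that the sequence produced by \eqref{eq:recurrence} matches the candidate in Equation \eqref{eq:1bcpreper}: at each stage boundary the values on the preceding stage, together with the three-term recurrence $w(n) = 1 - \min\{w(n-1), w(n-b), w(n-c)\}$, determine the next block explicitly. The three subcases correspond to how the stage evolution terminates: in (vii) it runs for exactly $\gamma$ stages before stabilizing into period $c+1$; in (viii) it terminates after $q$ stages into period $b+c$; in (ix) the boundary relation $q = \gamma$ forces the eventual period to collapse to $b-1$. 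Once the sequence reaches the end of the preperiod, eventual periodicity is confirmed via Lemma \ref{lem:pernopreper}(iv), i.e.\ by comparing $\vec v$ at the start of the purported periodic tail to $\vec v$ one period later, which reduces to matching two explicit $c$-tuples.

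The main obstacle is the stage-by-stage bookkeeping in cases (vii)--(ix): each new stage interacts nontrivially with the previous $c$ values through the $b$ and $c$ offsets, and the match with Equation \eqref{eq:1bcpreper} must be maintained exactly up to and through the transition into the eventual periodic tail. The boundary case $q = \gamma$ in (ix), where the eventual period drops to $b-1$ (a value that need not divide either $b+c$ or $c+1$), is the most delicate: it requires a separate verification of the recurrence at the stage where the drop occurs, and the minimality of this short period must be established by showing that no shorter length survives the checks of Lemma \ref{lem:translating_zeros}.
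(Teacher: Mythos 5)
Your proposal is correct and follows essentially the same route as the paper: construct the explicit candidate block for each case, verify it against recurrence \eqref{eq:recurrence}, close the purely periodic cases via Lemma \ref{lem:translating_zeros}/Corollary \ref{cor:translating_zeros1bc} (or the $\vec v$-repetition criterion of Lemma \ref{lem:pernopreper}), and handle (vii)--(ix) by stagewise induction through the terms of Equation \ref{eq:1bcpreper}, which is exactly what the paper's alignment-diagram argument in Subsection \ref{sec:1,b,cpreperproof} does. The only cosmetic difference is that the paper dispatches cases (i) and (iv) by extension arguments (Example \ref{ex:1} and Proposition \ref{prop:invariant-extension}) rather than by direct translating-zeros checks.
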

\begin{figure}
\centering
\includegraphics[scale=0.4]{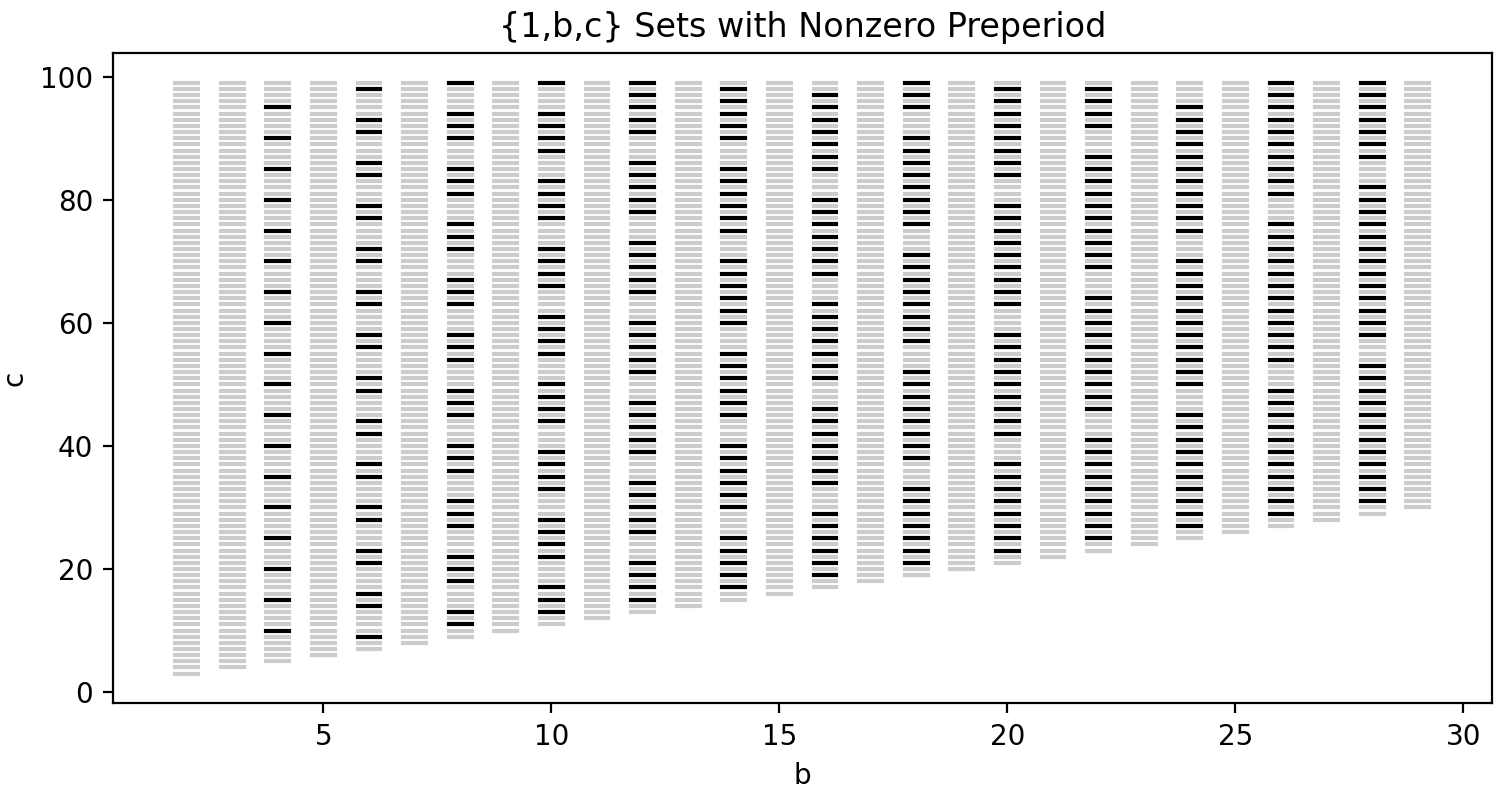}
\caption{Theorem \ref{thm:1bc} proves that $\preper(\{1,b,c\})\neq 0$ exactly when $b$ is even, $c>b+1$, and $r<b-2$ is even, with $r:\equiv c\pmod{b+1}$.}
\label{fig:1bcexistenceofpreperiod}
\end{figure}
The last three cases yield the most interesting results, providing an exact specification of the existence and structure of preperiods. These can be visualized in Figures \ref{fig:1bcexistenceofpreperiod} and \ref{fig:1bcpreperlength}.
\begin{proof}[Proof (i)]
If $b$ and $c$ are odd, then they are both extensions of $\{1\}$ as in Example \ref{ex:1}, so $\{w^A\}=\{w^{\{1\}}\}=(01)^\infty$.
\end{proof}
\begin{proof}[Proof (ii)]
Suppose $b$ is odd and $c$ is even.  For $n<c$, $w(n-c)=1$, so $w(n)=1-\min\{w(n-1),w(n-b),1\}$, and therefore $w^A(n)=w^{\{1,b\}}(n)=w^{\{1\}}(n)$, because $b$ is an extension of $\{1\}$. Thus $\{w^A\}=(01)^{c/2}\underset{c}{\ldots}$ Now for all $n\in [c,c+b)$, we find that $n-b<c$ and $n-c<c$. Therefore if $n$ is odd, then $n-b$ is even so $w(n-b)=0$ and therefore $w(n)=1$. If $n$ is even then $n-c$ is even so $w(n-c)=0$ and therefore $w(n)=1$. Thus:
\[
\{w^A\}=(01)^{c/2}1^{b}\underset{b+c}{\ldots}
\]
Because $w^A(b+c-1)=1$, Corollary \ref{cor:translating_zeros1bc} of the translating zeros lemma implies that this is the entire period with no preperiod.

\end{proof}
\begin{proof}[Proof (iii)]
Suppose $b=2k$ and $c=b+1$. For $n<c$ we have a single period of $\{1,b\}$, specifically $\{w^A\}=(01)^{k}1\underset{c}{\ldots}$ Next, for $n\in[b+1,2b)$, we find that $n-b<b+1$ and $n-c<b$. If $n$ is odd, then $n-c$ is even so $w(n-c)=0$ and thus $w(n)=1$. If $n$ is even, then $n-b$ is even so $w(n-b)=0$ and thus $w(n)=1$. This yields $\{w^A\}=(01)^k1^b\underset{b+c}{\ldots}$. Because $\vec v(b+c)=\vec v(0)=1^c$, this is the entire period with no preperiod, as implied by Lemma \ref{lem:pernopreper}.
\end{proof}
\begin{proof}[Proof (iv)]
Suppose $b=2k$ and $r\in \{1,b\}$. Then $c$ is an extension of $\{1,b\}$ by Proposition \ref{prop:invariant-extension}, so $\{w^A\}=\{w^{\{1,b\}}\}=\big((01)^k1\big)^\infty$.
\end{proof}
\begin{proof}[Proof (v)]
Suppose $b=2k$ and $r>1$ is odd. As above, for the first $c$ elements of the sequence, $\{w^A\}$ is equal to $\{w^\{1,b\}\}$.
\[
\{w^{\{1,2k\}}\}= \big((01)^k1\big)^q (01)^{\frac{r\text{-}1}{2}}0\underset{c}{\ldots}
\]
Now let $n=q(b+1)+m$ for $m\in [r,b)$. This means that $n-b<q(b+1)$ and $n-c=m-r$. If $m$ is odd, then $n-b$ is odd so $w(n-b)=0$, so $w(n)=1$. If $m$ is even, then $w(n-1)=1$ and $w(n-b)=1$ and $w(n-c)=w(m-r)=1$, so $w(n)=0$. This extends the sequence to
\[
\{w^A\}=\big((01)^k1\big)^q(01)^k\underset{q(b+1)+b}{\ldots}
\]
We just showed $w(q(b+1))=0$, so $w(q(b+1)+b)=1$.
\[
\{w^A\}=\big((01)^k1\big)^q(01)^k1\underset{(q+1)(b+1)}{\ldots}
\]
Now let $n=(q+1)(b+1)+m$ for $m\in [0,r-1)$. If $m$ is odd, then $n-b=q(b+1)+m+1$, which has an even remainder less than $b$, so $w(n-b)=0$ and therefore $w(n)=1$. If $m$ is even, then $n-c=b+1-(r-m)$, which is even and less than $b+1$, so $w(n-c)=0$ and therefore $w(n)=1$. We then extend the sequence.
\[
\{w^A\}=\big((01)^k1\big)^q(01)^k1^r\underset{b+c}{\ldots}
=\big((01)^k1\big)^{q+1}1^{r-1}\underset{b+c}{\ldots}
\]
Because $w^A(b+c-1)=1$, we may again apply Corollary \ref{cor:translating_zeros1bc} to claim that this is the complete period with no preperiod.

Note that if $r=1$, this solution still works and has sub-period $b+1$, which agrees with \textit{(iv)}.
\end{proof}
\begin{figure}
\centering
\includegraphics[scale=0.35]{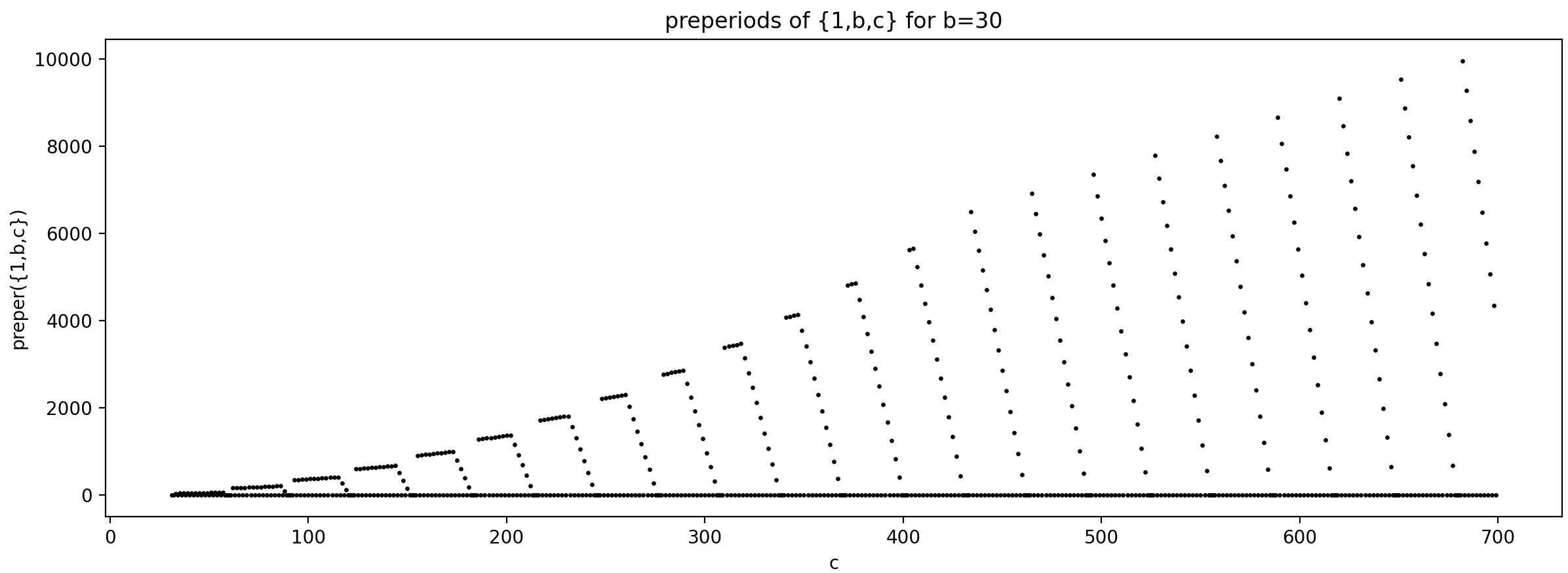}
\caption{Theorem \ref{thm:1bc} proves that $\preper(\{1,b,c\})=\min\left(\floor{\frac cb},\frac{a-r-2}{2}\right)(b+c+2)-b-1$ whenever it is nonzero. This is approximately quadradic in $c$ when $c$ is close to $b$, and transitions to linear when $c\gg b$.}
\label{fig:1bcpreperlength}
\end{figure}

\begin{proof}[Proof (vi, vii, viii, ix)]
Assume $b=2k$, $r$ is even, and $c>b+1$, and recall $\gamma=\frac{b-r-2}{2}$. All four of the remaining cases are expressed in the following equation. Interestingly, the preperiod structure is the same for all cases.
\begin{multline}\label{eq:1bcpreper}
\{w^A\}\!=\sum_{i=0}^{\min(q,\gamma)-1}
\Big(
((01)^k1)^{q-i}((01)^{k-1}1)^i(01)^{r/2+i}1^{2(\gamma-i)+1}(01)^{k-(\gamma-i)}1
\Big)\\
\left(\begin{cases}
((01)^k1)^{q-\gamma}((01)^{k-1}1)^{\gamma+1}& \gamma\leq q\\
((01)^{k-1}1)^q(01)^{r/2+q}1^{2(\gamma-q)+1}(01)^{r/2+q}1&\gamma\geq q
\end{cases}\right)^{\infty}
\end{multline}
A proof of this Equation is given in Subsection \ref{sec:1,b,cpreperproof}. We simply check that the recurrence relation is satisfied.

In case \textit{(vi)} where $r=b-2$, this means $\gamma=0$ so the summation is empty and there is no preperiod. Because we assume $q\geq 1$, this falls into the $\gamma\leq q$ case of Equation \ref{eq:1bcpreper}, which has length $q(b+1)+(b+1)-2=c+1$.

In the remaining cases both $q$ and $\gamma$ are nonzero, and each term in the preperiod summation has length
\begin{multline*}
(q-i)(b+1)+i(b-1)+(r+2i)+2(\gamma-i)+1+2(k-(\gamma-i))+1
\\=
q(b+1)-2i+r+2i+2k+2=c+b+2
.
\end{multline*}
Examine the last $b+1$ values of the last term in the summation, $2^{2(\gamma-\min(q,\gamma)+1)}(01)^{k-(\gamma-\min(q,\gamma)+1)}1$. If $\gamma\leq q$, this is $1^2(01)^{k-1}1$, which is equal to the last $b+1$ values of the period structure. If $q\leq \gamma$, this is equal to $2^{2(\gamma-q+1)}(01)^{k-\gamma+q-1}1=2^{2(\gamma-q+1)}(01)^{r/2+q}$, which is also equal to the last $b+1$ values of the period structure.
This implies that the preperiod transitions into the period $b+1$ steps earlier than depicted in Equation \ref{eq:1bcpreper}, and has length $\min(q,\gamma)(b+c+2)-b-1$.

In cases \textit{(vii)} and \textit{(viii)}, $\per(A)$ can be computed simply by counting the length of the strings in Equation \ref{eq:1bcpreper}. The period for $\gamma\leq q$ has length 
\[(b+1)(q-\gamma)+(b+1-2)(\gamma+1)=(b+1)q-2\gamma+b-2+1=(b+1)q+r+1=c+1.\]
The period for $q\leq \gamma$ has length 
\[q(a+1-2)+(r+2q)+(2\gamma-2q+1)+(r+2q)+1=q(a+1)+2r+2\gamma+2=b+a.\]
In case \textit{(ix)}, where $\gamma=q$, recall that $r+2\gamma=2k-2$. This allows us to simplify $r/2+q=r/2+\gamma=k-1$, so the two period structures are equivalent and can be expressed as $((01)^{k-1}1)^\infty$, which has period $b-1$. 

We might also notice that Equation \ref{eq:1bcpreper} also holds if $r=b$ (with no preperiod) and agrees with case \textit{(iv)}. In this case we would have $\gamma=-1$ so $\gamma\leq q$ and $\{w^A\}=((01)^k1)^\infty$.
\end{proof}
One result of this is that preperiod lengths can be arbitrarily longer than periods in the $\{1,b,c\}$ case. This can be seen in Figure \ref{fig:1bcpreperlength}, where the preperiod length appears quadratic in $c$.
\begin{example}
Let $b=2k$ and $c=k(b+1)$. Then $A=\{1,2k,2k^2+k\}$. This means $q=k$, $r=0$, and $\gamma=k-1$. Noting that $\gamma<q$, Equation \ref{eq:1bcpreper} yields
\begin{equation}\label{eq:ex1bc}
\{w^A\}\!
=\sum_{i=0}^{k-2}
\Big(
((01)^k1)^{k-i}((01)^{k-1}1)^i(01)^i1^{2(k-i)-1}(01)^{i+1}1
\Big)\\
\left(
(01)^k1((01)^{k-1}1)^{k}\right)^{\infty}
\end{equation}
and Theorem \ref{thm:1bc} (case vii) gives $\per(A)=2k^2+k+1$, and $\preper(A)=(k-1)(2k+2k^2+k+2)-2k-1=2k^3+k^2-3k-3$.

If we instead choose $b=2k$ and $c=(k-1)(b+1)$, we would have $q=\gamma$ (case ix) so $\per(A)=2k-1$ and $\preper(A)=2k^3-3k^2+2k-4$.
\end{example}
For general 3-sets, Alth\"ofer and B\"ulterman \cite[problem (vi)]{superlinear} provided the example $A=\{2s,4s+1,22s+2\}$ with $\per(A)=26s+3$, though they err in giving  $\preper(A)=24s^2-4s+1$ for $s\in [2,20]$ while actually $\preper(A)=24s^2-4s-1$ for all $s\in [2,200]$. Another example follows from \cite[thm 2]{bipartite}. If $A= \{k,k+2,2k+3\}$, then $\preper(A)=\frac{1}{2}(3k^2-5)$ and $\per(A)=2$. Thus for general $3$-sets, $\preper(A)$ is not bounded by any function of $\per(A)$, whereas for $A=\{1,b,c\}$, Theorem \ref{thm:1bc} shows that $\preper(A)=\bigO(\per(A)^3)$.

In Section $\ref{sec:ab}$, we used the $A=\{1,b\}$ case to characterize all possible $2$-sets using a permutation of $w^A$. A similar strategy may be possible if we could characterize all periodicities of $\{1,b,c\}$, though this would be more complicated. In particular, note that the length of the periods are highly dependent on seeds, unlike the $\{a,b\}$ case. An example of this is proven in Section \ref{sec:superpolynomial} and visualized in Figure \ref{fig:allperiods1bbp1}. 

Suppose $A=\{a,b,c\}$ and we are given any string $Y$ with $|Y|=p$. If $\gcd(a,p)=1$, then we let $a\inv $ be the multiplicative inverse of $a$ in $\Z^\times_p$ and $b':\equiv a\inv b\pmod p$ and $c':\equiv a\inv c\pmod p$. We see $X=\sigma_{a,p}(Y)$ is a permutation of $Y$, so we could show in a manner similar to Theorem \ref{thm:2setreduction} that $Y^\infty\in \mathcal W^{\{a,b,c\}}$ if and only if $X^\infty\in \mathcal W^{\{1,b',c'\}}$. Thus as long as $p$ and $a$ are coprime, we have $\mathcal W^{a,b,c}(p)=\sigma_{a\inv ,p}\left[\mathcal W^{1,b',c'}(p)\right]$. This observation carries little information without further understanding of $\mathcal W^{1,b,c}$.

As an example we apply Lemma \ref{lem:multiple_lines}. Choose any $n\in \N$ and $d\geq 1$. Let $b=4n+2d+1$ and let $p=2(n+1)b+1$. Lemma \ref{lem:multiple_lines} will imply that $p\in \mathcal P^{\{1,b,b+1\}}$. 2 is coprime to $p$, so calculate that $2\inv=(n+1)b+1$ and $2\inv b=(2n+d)+2\inv=(n+1)b+2n+d+1$, and finally $2\inv (b+1)=2n+d+1$. Therefore there is some seed $S$ such that $\per(\{2,2n+d+1,(n+1)b+2n+d+1\},S)=2(n+1)b+1$.

\subsection{Building the preperiod sequence for $\{1,b,c\}$}\label{sec:1,b,cpreperproof}

The statements in Theorems \ref{thm:1bc}, \ref{thm:aba+b}, and Lemma \ref{lem:multiple_lines} are somewhat tedious, so we provide three proofs of distinct flavors. In this section we provide a visual verification of Equation \ref{eq:1bcpreper} to complete the proof of Theorem \ref{thm:1bc}.

\begin{proof} We claim that if $b=2k$, $q=\floor{c/(b+1)}$, $r=c-qb$ is even $c>b+1$, and $\gamma=\frac{b-r-2}{2}$, then Equation \ref{eq:1bcpreper} holds.

To verify the construction of the preperiod, we will confirm that for each term $i\in\{0,\ldots, \min(q-1,\gamma)\}$, the recurrence relation holds. If $i=0$, note that the first $c$ entries proceed as $w^{1,b}$, so $\{w^A\}=\big((01)^k1)^q(01)^{r/2}\underset{^c}{\ldots}$, which agrees with the $i=0$ term of Equation \ref{eq:1bcpreper}. Thus when considering prefixes we may assume $i\geq 1$. Suppose the $i\nth$ term starts at entry $m$, and assume that the previous $i-1$ terms follow Equation \ref{eq:1bcpreper}.

The ``alignment diagram" on the left hand side in Figure~\ref{fig:alignment-diagrams}
re-writes the structure presented in Equation \ref{eq:1bcpreper} on two lines such that $w^A(n)$ on the first line is horizontally justified with $w^A(n-c)$ on the second line. To interpret this diagram, we simply confirm that for all $n$, if $w^A(n)=0$, then then look directly below to check that $w^A(n-c)=1$. Further, if $w^{A}(n)=1$, then either $w^{A}(n-1)=0$ on the left, or $w^A(n-c)=0$ directly below (shown in bold), or neither is true and we must have $w^A(n-b)=0$ (underlined). 

\begin{figure}
    \centering
\begin{sideways}
  \parbox{\textheight}{
\[
\hspace{-10pt}\{w^A(n)\}=\underset{m}{\ldots} \overbrace{0(10)^{k-1}1\textbf{1}\big((01)^{k-1}01\textbf{1}\big)^{q-i-1}}^{\big((01)^k1\big)^{q-i}}
\overbrace{\!\big((01)^{k-2}01\textbf{1}\big)^{i-1}(01)^{k-1}\hspace{58pt}\textbf{1}}^{\big((01)^{k-1}1)\big)^i}
\overbrace{(01)^{r/2+i-1}01(\textbf{1}\textbf{\underline{1}})^{\gamma-i}\textbf{1}}^{\big(01\big)^{r/2+i}1^{^{2(\gamma-i)+1}}}
\overbrace{(01)^{k-\gamma+i-1}01\textbf{1}}
^{(01)^{k-(\gamma-i)}1}\ldots
\]
\[
\hspace{-26pt}\{w^A(n-c)\}=\!\underset{m-c}{\ldots}\! 1(01)^{k-1}1
\underbrace{\!\textbf{0}\big((10)^{k-1}11
}_{\big((01)^{k}1\big)^{q-i-1}} \underbrace{\!\textbf{0}\big)^{q-i-1}\!\big((10)^{k-2}11}_{\big((01)^{k-1}1\big)^{i-1}}
\underbrace{\!\textbf{0}\big)^{i-1} (10)^{r/2+i-2}1^{2(\gamma-i)+4}}
_{\big(01\big)^{r/2+i-1}1^{^{2(\gamma-i+1)+1}}}
\underbrace{\textbf{0}\,(10)^{r/2+i-1}11}_{\big(01\big)^{k-(\gamma-i+1)}1}
\underbrace{(\textbf{0}\underline 1)^{\gamma-i}\textbf{0}(10)^{k-\gamma+i-1}11}
_{\big(01\big)^k1} \!\textbf{0}\ldots
\]

\vspace{4cm}

\[
\hspace{-10pt}\{w^A(n)\}=\underset{m}{\ldots}
\overbrace{(01)^{\gamma-i}(01)^{k-(\gamma-i)-1}011\big((01)^{k-1}011\big)^{q-i-1}}
^{\big((01)^{k}1\big)^{q-i}}
\overbrace{\big((01)^{k-1}1\big)\ 
\big((01)^{k-1}1\big)^{i-1}}
^{\big((01)^{k-1}1)\big)^i}
\,\overbrace{\,(01)^{r/2+i}(1\textbf{\underline 1})^{\gamma-i}\hspace{29pt}1}
^{\big(01\big)^{r/2+i}1^{^{2(\gamma-i)+1}}}
\,\overbrace{(01)^{k-(\gamma-i)-1}011}
^{(01)^{k-(\gamma-i)}1}\ldots
\]
\[
\hspace{-24pt}\{w^A(n-b)\}=\!\underset{m-b}{\ldots}\!
(11)^{\gamma-i}(1\underbrace{\!\!0)^{k-(\gamma-i)-1}11}
_{(01)^{k-(\gamma-(i-1))}1}
\!\underbrace{\!0\big((10)^{k-1}110\big)^{q-i-1}\hspace{7pt}(10)^{k-1}1\ 
\hspace{4pt}\big((1\!}
_{\big((01)^{k}1\big)^{q-i}}
\underbrace{\!\,0)^{k-1}1\big)^{i-1} \big((10)^{r/2+i}(1\textbf{0})^{k-r/2-1-i}
1\big)(1}
_{\big((01)^{k-1}1\big)^i}
\underbrace{\!0)^{r/2+i}\hspace{20pt}1}
_{\big(01\big)^{r/2+i}}\!
1*\ldots
\]

  }  
\end{sideways}
    \caption{Two alignments diagrams re-writing the structure presented in Equation \ref{eq:1bcpreper}. See text for details.}
    \label{fig:alignment-diagrams}
\end{figure}

Consider the additional case where $q\leq \gamma$ and $i=q$. The diagram nearly holds until the last $b+1$ entries. In particular, we note that $((01)^k1)^{q-i}=\emptyset$, so the $w^A(n-c)$ sequence should conclude with $(01)^{k-1}1(01)0$ instead of $(01)^{k}10$. Therefore the $q\nth$ term can be modified to \\$((01)^{k-1}1)^q(01)^{{r/2}+q}1^{2(\gamma-q)+1}(01)^{k-(\gamma-q)-1}10101$.

The alignment diagram on the right hand side in Figure~\ref{fig:alignment-diagrams}
similarly re-writes the structure such that $w^A(n)$ is horizontally justified with $w^A(n-b)$. We must confirm that if $w^A(n)=0$, then directly below, $w^A(n-b)=1$. We also confirm that the underlined entry has $w^A(n-b)=0$.

Therefore for all $i\in[0,\ldots, \min(q-1,\gamma)]$, the $i\nth$ term follows the recurrence. Note that the $*$ does not affect the recurrence, but represents that the value is unknown. This value is $1$ if $i<\gamma$ but $0$ if $i=\gamma$. In either case, the recurrence holds. Now we consider the $\gamma\leq q$ case. As justified above, after the $\gamma-1\nth $ term, the $\gamma\nth$ term begins at index $m_1$ with 
\begin{align*}
\{w^A\}&=\underset{m_1}{\ldots} \big((01)^k1\big)^{q-\gamma}\big((01)^{k-1}1)^\gamma(01)^{r/2+\gamma}1^{2(\gamma-\gamma)+1}(01)^{k-(\gamma-\gamma)}1\ldots \\
&=\underset{m_1}{\ldots} \big((01)^k1\big)^{q-\gamma}\big((01)^{k-1}1)^\gamma(01)^{k-1}1(01)^{k}1\ldots \\
&=\underset{m_1}{\ldots} \big((01)^k1\big)^{q-\gamma}\big((01)^{k-1}1)^{\gamma+1}(01)^{k}1\ldots 
\end{align*}
We now observe that for $Y=\big((01)^k1\big)^{q-\gamma}\big((01)^{k-1}1)^{\gamma+1}$, the sequence $Y^\infty$ satisfies the recurrence. Because $Y$ has length $c+1$, it suffices to check that $Y(n)=1$ if and only if $Y(n-1)=0$ or $Y(n+1)=0$ or $Y(n-2k)=0$. $Y$ satisfies this criterion by inspection.

Now consider the $q\leq \gamma$ case. As justified above, after the $q-1\nth$ term, the $q\nth$ term begins at index $m_1$ with
\begin{align*}
\{w^A\}&=\underset{m_1}{\ldots}
((01)^k1)^{q-q}((01)^{k-1}1)^q(01)^{r/2+q}1^{2(\gamma-q)+1}(01)^{k-(\gamma-q)-1}10101\ldots \\
&=\underset{m_1}{\ldots}
((01)^{k-1}1)^q(01)^{r/2+q}1^{2(\gamma-q)+1}(01)^{r/2+q}1\hspace{7pt} 0101\ldots 
\end{align*}
Now, we show that for $Y=((01)^{k-1}1)^q(01)^{r/2+q}1^{2(\gamma-q)+1}(01)^{r/2+q}1$, the sequence $Y^\infty$ satisfies the recurrence using alignment diagrams. Note that $|Y|=b+c$, and we start the diagram at $Y(-1)$ to simplify the diagram.
\begin{align*}
Y^\infty(n)&=\underset{-1}{\ldots}\hspace{7pt}1\overbrace{\!\big((01)^{k-2}01\textbf{1}\big)^{q-1}(01)^{k-1}\hspace{59pt}\textbf{1}}^{\big((01)^{k-1}1)\big)^q}
\overbrace{(01)^{r/2+q-1}01\hspace{1pt}(\textbf{1}\textbf{\underline 1})^{\gamma-q}\textbf{1}}^{\big(01\big)^{r/2+q}1^{^{2(\gamma-1)+1}}}
\overbrace{\!(01)^{r/2+q-1}\hspace{4pt}01\textbf{1}}
^{(01)^{r/2+q}1}\ldots
\\
Y^\infty(n-c)&=\!\!
\underset{-1-c}{\ldots} \underbrace{\textbf{0}\big((10)^{k-2}11}_{\big((01)^{k-2}1\big)^{q-1}}
\underbrace{\!\textbf{0}\big)^{i-1} (10)^{r/2+q-1}1^{2(\gamma-q)+2}}
_{\big(01\big)^{r/2+q}1^{^{2(\gamma-q)+1}}}
\underbrace{\textbf{0}\,(10)^{r/2+q-1}11}_{\big(01\big)^{r/2+q}1}
\underbrace{(\textbf{0}\underline 1)^{\gamma-q}\textbf{0}(10)^{k-\gamma+q-2}\hspace{1pt}11}
_{\big(01\big)^{k-1}1} \!\textbf{0}\ldots
\end{align*}
Similarly checking the recurrence for $b$:
\begin{align*}
Y^\infty(n)&=\underset{0}{\ldots}
\overbrace{\big((01)^{k-1}\hspace{53pt}1\big)
\big((01)^{k-1}1\big)^{q-1}}
^{\big((01)^{k-1}1)\big)^q}
\,\overbrace{\,(01)^{r/2+q}(1\textbf{\underline 1})^{\gamma-i}\hspace{29pt}1}
^{\big(01\big)^{r/2+i}1^{^{2(\gamma-i)+1}}}\,\overbrace{(01)^{r/2+q}1}
^{(01)^{r/2+q}1}\ldots
\\
Y^\infty(n-b)&=\underset{0-b}{\ldots}\hspace{3pt}
(11)^{\gamma-q}(1
\underbrace{\!0)^{k-\gamma+q-1}1\hspace{5pt}\big((1\!}
_{(01)^{r/2+q}1}
\underbrace{\!\,0)^{k-1}1\big)^{q-1} \big((10)^{r/2+q}(1\textbf{0})^{k-r/2-1-i}
1\big)(1}
_{\big((01)^{k-1}1\big)^q}
\underbrace{\!\!0)^{r/2+q}1}
_{\big(01\big)^{r/2+q}}\!
\ldots
\end{align*}
These diagrams verify that $Y^\infty$ satisfies the recurrence, and this completes the proof that Equation \ref{eq:1bcpreper} holds.
\end{proof}

\section{The $\{a,b,a+b\}$ case}\label{sec:a,b,a+b}

In \cite[p. 531]{winningways3}, Berlekamp et.\ al state without proof that the period lengths of the $\{a,b,a+b\}$ game are quadratic and give a formula for the period length of the Grundy sequence $\mathcal G(n)$. In \cite{superlinear}, Alth\"ofer and B\"ulterman prove a particular example of this case (Example \ref{ex:a2a+13a+1}). In this section we provide a proof for the general $\{a,b,a+b\}$ game by finding $\{w^A\}$ explicitly, which was presented as open by Ho in \cite[table 4]{3elementsets}.

\begin{theorem}\label{thm:aba+b}
Suppose $A=\{a,b,a+b\}$ with $a<b$. Define $k=\floor{\frac{b-1}{2a}}$, $\sigma_i:\equiv ib\pmod{a}$, and $\delta_i:
=\mathbf 1(\sigma_i>\sigma_{i-1})$, with the exception $\delta_1=0$.\footnote{This means $\delta_i=1$ if $\sigma_i>\sigma_{i-1}$ and $\delta_i=0$ if not.} Let $\tilde a=a/\gcd(a,b)$. Then
\begin{equation}
\{w^A\}=\begin{cases} \label{eqthm:aba+bstructure}
\displaystyle \left(\sum_{i=1}^{\tilde a-1}\left((0^a1^a)^{k-\delta_i}0^{\sigma_i}1^b0^{a-\sigma_i}1^a\right)0^a1^{a+b}\right)^\infty
&1\leq b<a\pmod{2a}
\\
\left((0^a1^a)^k0^a1^{a+b}\right)^\infty&\text{Otherwise}
\end{cases}
\end{equation}
It follows that
\begin{equation}\label{eqthm:aba+b}
\per(A)=\begin{cases}
\tilde a(2b-2ka)
&
1\leq b<a\pmod{2a}
\\
b+2a\left(k+1\right)
&
\text{Otherwise}
\end{cases}
\end{equation}
\end{theorem}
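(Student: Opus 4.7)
My plan is to first reduce to the coprime case via Multiplicative Linearity (Proposition \ref{prop:linearity}), then to construct $\{w^A\}$ block-by-block and verify the recurrence relation directly. If $g=\gcd(a,b)$, writing $A=g\{\tilde a,\,b/g,\,\tilde a+b/g\}$ splits the game into $g$ independent parallel copies of a coprime $\{a',b',a'+b'\}$ game, so it suffices to prove the statement when $\gcd(a,b)=1$ and $\tilde a=a$. The residues $\sigma_i=ib\pmod a$ and the ascent indicators $\delta_i$ only become genuinely nontrivial in this coprime setting.

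In the coprime case, note that for $n<b$ only the move of size $a$ is available, so $w^A(n)=w^{\{a\}}(n)$ produces a prefix of $(0^a1^a)^\infty$ up through index $b-1$. From $n=b$ onwards the moves $b$ and $a+b$ begin to contribute, and I would induct on the block index $i$ of the sum in Equation \ref{eqthm:aba+bstructure}. Each summand is a block of length $2a(k-\delta_i)+b+2a$ indexed by $\sigma_i$, which records the horizontal offset between the underlying $(0^a1^a)$-pattern and its $b$-shifted copy; the Boolean $\delta_i$ absorbs one half-period $(0^a1^a)$ into the previous block precisely when the $b$-shift produces an ascent $\sigma_i>\sigma_{i-1}$ rather than a wraparound modulo $a$.

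To verify the construction I would use the alignment-diagram technique developed in Subsection \ref{sec:1,b,cpreperproof}: write $w^A(n)$ on one line and underneath the three shifted copies $w^A(n-a)$, $w^A(n-b)$, $w^A(n-(a+b))$, then check in each block that wherever a $0$ appears on top, all three entries below are $1$, and wherever a $1$ appears on top, at least one entry below is $0$. The $a$-shift is immediate because $0^a1^a$ is self-complementary under shift by $a$, and the $(a+b)$-shift reduces to combining the $a$- and $b$-shifts, so the real work is in the $b$-shift alignment within each of the $\tilde a$ blocks. The ``Otherwise'' case (when $b\pmod{2a}\not\in[1,a)$, in particular whenever $\tilde a=1$) admits the same argument with an empty sum, since the offset $\sigma_i$ never lands in the awkward range and the sequence collapses to a single block $(0^a1^a)^k0^a1^{a+b}$. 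I would close the proof by invoking Lemma \ref{lem:pernopreper}(iv), which requires only the single equality $\vec v(\alpha+p)=\vec v(\alpha)$ where $p$ is the total length of one period of Equation \ref{eqthm:aba+bstructure}.

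The main obstacle is the bookkeeping for the offsets across all $\tilde a$ blocks: tracking how $\sigma_i$ progresses modulo $a$, and showing that the $\delta_i$-absorption of a $(0^a1^a)$ chunk between adjacent blocks never destroys the $b$-shift alignment at the block boundaries. In particular one must verify that the terminal tail $0^a1^{a+b}$, which corresponds to $\sigma_{\tilde a}=0$ and completes the residue cycle, joins back onto the head of the next period with matching $b$-shifted structure. Once this alignment bookkeeping is handled, the period formula in Equation \ref{eqthm:aba+b} should follow by summing the block lengths, with the simplification coming from the identity $\sum_{i=1}^{\tilde a-1}\delta_i = \tilde a-1-\lfloor(\tilde a-1)b/a\rfloor$ (or its analogue), which converts the number of wraparounds into a closed-form expression in $a$, $b$, and $k$.
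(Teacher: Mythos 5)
Your proposal is correct in substance and follows the same core strategy as the paper's proof: establish $w^A(n)=w^{\{a\}}(n)$ for $n<b$, induct over the blocks of Equation \ref{eqthm:aba+bstructure} while tracking the offsets $\sigma_i$ and the absorption indicators $\delta_i$, close the period by observing that the vector of previous values returns to $1^{a+b}$, and sum the block lengths. The one genuine divergence is your opening reduction to the coprime case via Proposition \ref{prop:linearity}; the paper instead keeps $g=\gcd(a,b)$ general and simply runs the induction until the first index $\tilde a$ with $\sigma_{\tilde a}=0$, i.e.\ the order of $b$ in $\Z_a^+$. Your reduction is legitimate and makes every $\sigma_i$ distinct, but it obliges you to check that the theorem's parameters rescale coherently: that $\floor{(b-1)/(2a)}=\floor{(b/g-1)/(2a/g)}$ (true because $b\bmod 2a$ is a multiple of $g$), that the case condition $b\bmod 2a\in[1,a)$ is invariant under dividing by $g$, and that $\sigma_i=g\cdot\big(i(b/g)\bmod (a/g)\big)$. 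Two smaller cautions. First, the claim that the $(a+b)$-shift check ``reduces to combining'' the $a$- and $b$-shifts is not automatic: the insertions $0^{\sigma_i}1^b0^{a-\sigma_i}$ destroy the clean $2a$-periodicity, so all three shifted copies must be aligned explicitly within and across block boundaries (the paper sidesteps this by constructing the sequence forward from the recurrence rather than verifying a guessed structure). Second, your identity $\sum_{i=1}^{\tilde a-1}\delta_i=\tilde a-1-\floor{(\tilde a-1)b/a}$ is not the right one; counting the wraparounds of $\sigma_i$ gives $\sum_{i=1}^{\tilde a-1}\delta_i=(\tilde a-1)-r/g$ with $r=b\bmod a$, and summing the block lengths then yields $\per(A)=\tilde a(3b-2ka)$ in the quadratic case --- note that the ``$2b$'' in the first line of Equation \ref{eqthm:aba+b} appears to be a typo for ``$3b$'', as one checks against Example \ref{ex:a2a+13a+1} and against the alternate formulation $\tilde a(2b+\rho)$.
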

\begin{figure}
    \centering
    \includegraphics[scale=0.57]{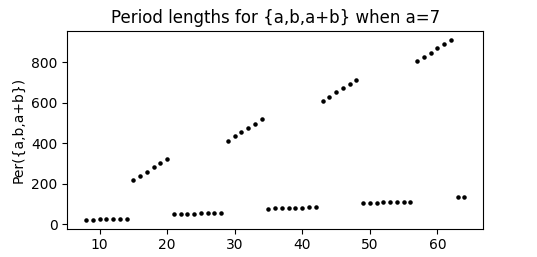}
    \!\!\!\!\!\!\!\!\!\!\!    \!\!\!\!\!
    \includegraphics[scale=0.55]{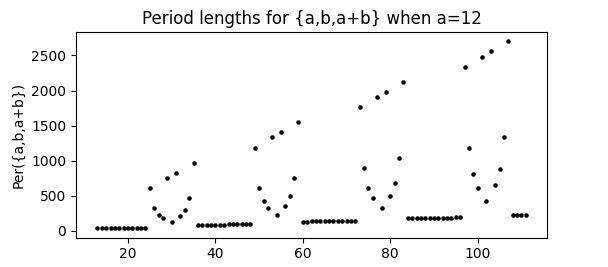}
    \caption{Two plots of $\per(\{a,b,a+b\})$ with fixed $a$. Note the phase transition from linear period lengths to quadratic, as well as the dips when $\gcd(a,b)\neq 1$. See Figure \ref{fig:abapb_extreme} for a more extreme example.}
    \label{fig:abapb}
\end{figure}
Berlekamp et.\ al \cite[p. 531]{winningways3} inspire an alternate formulation of Equation \ref{eqthm:aba+b}. If we let $b=2ha+\rho$ for $\rho\in(-a,a]$, then the following also holds:
\begin{equation*}
\per(A)=\begin{cases}
\tilde a(2b+\rho)&1\leq \rho<a
\\
\ \ \,2b+\rho&\rho\leq 0\text{ or }\rho=a
\end{cases}
\end{equation*}

\begin{proof}
We separate this proof into $2$ cases, beginning with the simpler linear case.

\textbf{Case 1:}
If $b\geq a\pmod{2a}$ or $b\equiv 0\pmod{2a}$, we can let $b=qa+r$ for odd $q=2k+1$ and $r\in [0,a]$.

For $n<b$ we find $w^A(n)=w^{\{a\}}(n)$, and $\{w^{\{a\}}\}=(0^a1^a)^\infty$. Because $q$ is odd,
\begin{align*}
\{w^A\}=\ 
&\ \left(0^a1^a\right)^k 0^{a}
1^r\ \underset{b}{\ldots}
\end{align*}
Next, for all $n\in[b,b+qa)$, we see that if $w(n-b)=0$ then both $w(n)=1$ and $w(n+a)=1$. This fills in the next portion of the sequence; we use underline and bold characters to highlight the structure of the recurrence.
\begin{align*}
\{w^A\}=\ 
&\ \left(\underline 0^a1^a\right)^k \underline 0^{a}
1^r
\left(\underline 1^{a}\textbf{1}^{a}\right)^k\underline 1^{a}\textbf{1}^{a}
\ \ =\ \ \left(0^a1^a\right)^k 0^{a}1^{a+b}\underset{b+qa+a}\ldots
\end{align*}
We now observe that $\vec v\big(b+(q+1)a\big)=1^{a+b}=\vec v(0)$, which proves that $\per(A)=b+2a(k+1)$ with no preperiod. We now move to the quadratic case. 

\vskip 5pt

\textbf{Case 2:} If $1\leq b<a\pmod{2a}$, then we will have $b=qa+r$, for $r\in [1,a)$ and $q=2k$. Again for $n<b$, the sequence is identical to $w^{\{a\}}$.
\begin{align*}
\{w^A\}=\ 
&\ \left(0^{a}1^{a}\right)^k0^r\underset{b}{\ldots}
\end{align*}
Once again for $n\in [b,2b)$, we notice that if $w^A(n-b)=0$, then by the recurrence $w^A(n)=w^A(n+a)=1$. This means that for all $n\in [b,2b)\cup[(2b-r)+a,2b+a)$, we have $w(n)=1$, as illustrated in the equation below. This leaves a gap of length $a-r$. In this gap $n\in [2b,(2b-r)+a)$, we find $w(n-a)=w(n-b)=w(n-a-b)=1$, so indeed $w(n)=0$.
\begin{align*}
\{w^A\}=\ 
&\ \undb{\left(\underline 0^{a}1^{a}\right)^k\underline 0^r}_b \undb{(\underline 1^a\textbf1^a)^k\underline 1^r}_b0^{a-r}{\ } \textbf1^r\underset{2b+a}{\ldots}
\ \ =\ \ 
\left(0^{a}1^{a}\right)^k0^r1^b0^{a-r}{\ }1^r\underset{2b+a}{\ldots}
\end{align*}
Because we showed $n\in [2b,2b+a-r)$ has $w(n)=0$, this implies $w(n+a)=1$, so we may append $(a-r)$ $1$'s after $2b+a$.
\begin{align*}
\{w^A\}=\ 
&\ 
\left(0^{a}1^{a}\right)^k0^r1^b0^{a-r}{\ }1^a\underset{2b+2a-r}{\ldots}
\end{align*}
This completes the first term in the summation, where $\sigma_1=r$ and $\delta_1=0$. Using this as a base case for induction, we now show that the $i+1\nth$ term in the summation succeeds the $i\nth$ term. Suppose
\begin{align*}
\{w^A\}=\ 
&\ 
\ldots \left(0^{a}1^{a}\right)^{k-\delta_i}0^{\sigma_i}1^b0^{a-\sigma_i}1^a\ldots
\end{align*}
We need only consider recent values of $w(n)$ back to $w(n-a-b)$, so define the index $m$ such that\\
$\{w^A\}=\underset{m}{\ldots}1^{b-a+\sigma_i}0^{a-\sigma_i}{\ }1^a\ldots$ Now let $m_1=m+(b-a+\sigma_i)$ and notice that $\vec v(m_1)$ ends with a long string of ones. 
In particular we find that for all $n\in[m+a+b,m_1+b)$, we have $w^A(n-a-b)=1$ and $w^A(n-b)=1$, so $w^A$ proceeds identically to $w^{\{a\}}$ for a length of precisely $b-2a+\sigma_i$. We must now consider sub-cases for parity, where $\sigma_i+r<a$ or $\sigma_i+r\geq a$. We show the two cases below
\begin{align*}
\{w^A\}=\ 
&\ 
\ldots \left(0^{a}1^{a}\right)^{k-\delta_i}0^{\sigma_i}1^b\undb{0^{a-\sigma_i}1^a
\ \ \ 
0^a1^a0^a1^a\ldots}_{b}\underset{m_1+b}{\ldots}
\\=\ &\ 
\begin{cases}
\underset{m_1}{\ldots}\,{0^{a-\sigma_i}1^a
\ \ \ 
(0^a1^a)^{k-1}0^{\sigma_i+r}}\hspace{20pt}\underset{m_1+b}{\ldots}&\sigma_i+r< a
\\
\underset{m_1}{\ldots}\,{0^{a-\sigma_i}1^a
\ \ \ 
(0^a1^a)^{k-1}0^a1^{\sigma_i+r-a}}\underset{m_1+b}{\ldots}&\sigma_i+r\geq a
\end{cases}
\end{align*}
Note that in the two cases we can plug in $\sigma_{i+1}=\sigma_i+r=$ and $\sigma_{i+1}=\sigma_i+r-a$ respectively.
In the second case, where $\sigma_i+r\geq a$, then for $n\in \big[(m_1+b),(m_1+b)+(a-\sigma_{i+1})\big)$, we find $w^A(n-a)=0$ so $w^A(n-a)=1$. For $n\in \big[(m_1+b)+(a-\sigma_{i+1}),(m_1+b)+a\big)$, we find $w^A(n-a)=w^A(n-b)=w^A(n-a-b)=1$, so $w(n)=0$. Extend this second case below, defining $m_2$ as the frontier of the sequence:
\begin{align*}
\{w^A\}=\ 
&\ 
\begin{cases}
\underset{m_1}{\ldots}\,{0^{a-\sigma_i}1^a
\ \ \ 
(0^a1^a)^{k-1}0^{\sigma_{i+1}}}\hspace{20pt} \underset{m_2}{\ldots}&\sigma_i+r< a
\\
\underset{m_1}{\ldots}\,0^{a-\sigma_i}1^a
\ \ \ 
(0^a1^a)^{k-1}0^a\mathbf1^a0^{\sigma_{i+1}} \underset{m_2}{\ldots}&\sigma_i+r\geq a
\end{cases}
\end{align*}
This adds another copy of $0^a1^a$ to the sequence in the second case, and observe that $\delta_{i+1}=1$ in the first case and $\delta_{i+1}=0$ in the second. We can therefore combine cases. Additionally, we find that for $n\in[m_2,m_2+b)$, if $w(n-b)=0$ then $w(n)=w(n+a)=1$.
This leads to another string of $1^b$ with a gap of length $a-\sigma_{i+1}$ which is filled with zeros. This is shown below, where $m_3=m_2+b+a$,
\[
\{w^A\}=\ldots\,(\underline 0^a1^a)^{k-\delta_{i+1}}\underline 0^{\sigma_{i+1}} \undb{(\underline 1^a\textbf 1^a)^k \underline 1^{\sigma_{i+1}}}_b \undb{0^{a-\sigma_{i+1}} \textbf 1^{\sigma_{i+1}}}_a \underset{m_3}{\ldots}
\]
For $n\in \big[m_3,m_3+(a-\sigma_{i+1})\big)$, we see $w(n-a)=0$ so $w(n)=1$. This completes the inductive step as we show below:
\[
\{w^A\}=\ldots
\left(0^{a}1^{a}\right)^{k-\delta_i}0^{\sigma_i}1^b0^{a-\sigma_i}1^a
\ \ \ 
\left(0^{a}1^{a}\right)^{k-\delta_{i+1}}0^{\sigma_{i+1}}1^b0^{a-\sigma_{i+1}}1^a\ldots 
\]
By induction this pattern will repeat. Let $\tilde a=\frac{a}{\gcd(a,b)}$, which is the order of $b$ in $\Z_a^+$. Thus the $\tilde a\nth$ iteration is the first such that $\sigma_{\tilde a}=0$ and $\delta_{\tilde a}=0$ so the sequence is 
\begin{equation}
\{w^A\}\ \ =\ \ \sum_{i=1}^{\tilde a-1}\Big((0^a1^a)^{k-\delta_i} 0^{\sigma_i}1^b0^{a-\sigma_i}1^a\Big)\ (0^a1^a)^k1^b\underset{m_4}{\ldots}
\end{equation}
We see that $\vec v(m_4)=1^{a+b}=\vec v(0)$, so this is the entire period. We see that the $i\nth$ term of the summation has length $2ka-2a\delta_i+b+2a$, and there are $\tilde a$ total terms. We also compute that $\frac{r\cdot \tilde a}{a}$ is the number of terms $j$ for which $\sigma_j<\sigma_{j-1}$, so if we include the first term $\delta_1=0$ and exclude the last term $\delta_{\tilde a}=0$, there are precisely $r/\gcd(a,b)$ terms in the summation where $\delta_j=0$, and the rest have $\delta_i=1$.
Therefore $\sum_{i=1}^{\tilde a-1}\delta_i=(\tilde a-1)-r/\gcd(a,b)$. We conclude that the total period length is
\begin{align*}
\per(A)&=\sum_{i=1}^{\tilde a-1}(2ka-2a\delta_i+b+2a)+2ka+b
\\&=(2ka(\tilde a-1)-2a((\tilde a-1)-r/\gcd(a,b))+b(\tilde a-1)+2a(\tilde a-1))+2ka+b
\\
&=2ka\tilde a+2ar/\gcd(a,b)+b\tilde a
\\
&=3\tilde ab-2ka\tilde a\qedhere
\end{align*}
\end{proof}
The following example is given as a Theorem in \cite{superlinear}.
\begin{figure}
\centering
\begin{verbatim}
-------------1111111111111---11111111111111111111111111111----------1111111111111
                          ------11111111111111111111111111111-------1111111111111
                          ---------11111111111111111111111111111----1111111111111
                          ------------11111111111111111111111111111-1111111111111
-------------1111111111111--11111111111111111111111111111-----------1111111111111
                          -----11111111111111111111111111111--------1111111111111
                          --------11111111111111111111111111111-----1111111111111
                          -----------11111111111111111111111111111--1111111111111
-------------1111111111111-11111111111111111111111111111------------1111111111111
                          ----11111111111111111111111111111---------1111111111111
                          -------11111111111111111111111111111------1111111111111
                          ----------11111111111111111111111111111---1111111111111
                          -------------111111111111111111111111111111111111111111
\end{verbatim}
    \caption{Let $a=13$, $b=2a+3$, and $A=\{13,29,42\}$. We compute $\per(A)=793$, and the period structure is shown above, with \texttt{-} used for $0$.}
    \label{fig:enter-label}
\end{figure}

\begin{example} \cite[Thm 3.1]{superlinear}\label{ex:a2a+13a+1} Let $A=\{a,2a+1,3a+1\}$. Then Equation \eqref{eqthm:aba+bstructure} gives the following sequence, where $b=2a+1$, $k=1$, and $\sigma_i=i$ so $\delta_i=1$ for all $i\in [1,a-1)$.
\begin{align*}
\{w^A\}\ \ &=\ \ 
\Bigg(0^a1^a\sum_{i=1}^{a-1}\Big( 0^{i}1^b0^{a-i}1^a\Big)\ 0^a1^{a+b}\Bigg)^\infty\\
&=\ \ 
\Bigg(0^a1^a\ \ 0^11^b0^{a-1}1^a\ \ \ 
0^{2}1^b0^{a-2}1^a\ \ \ 
0^{3}1^b0^{a-3}1^a\ \ \ \ldots\ \ \ 0^a1^{a+b}\Bigg)^\infty
\end{align*}
Additionally $\per(A)=(6a^2+3a)-2a^2=4a^2+3a\sim \frac{2}{9}\max(A)^2$.
\end{example}

This class of sets appears to be the only case for $|A|=3$ which can have superlinear period lengths with no seed. Further generalizations using this result and Theorem \ref{thm:1bc} might bring the following conjecture within reach.

\begin{conjecture}\label{conj:linearbound}
For all $a<b<c$ such that $a+b\neq c$,
\begin{equation}
\per(\{a,b,c\})<2c
\end{equation}
\end{conjecture}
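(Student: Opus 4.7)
The plan is to prove the stronger statement that $\per(A) \mid (b+c)$ in all but a small set of exceptional cases, from which $\per(A) \leq b+c < 2c$ follows. First, by Multiplicative Linearity (Proposition \ref{prop:linearity}), it suffices to handle $\gcd(a,b,c)=1$, since scaling by $g$ multiplies both $\per(A)$ and $\max(A)$ by $g$ and thus preserves the ratio $\per(A)/c$.

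The key tool will be Corollary \ref{cor:translating_zerosabc}, which asserts that $\per(A)\mid (b+c)$ with $\preper(A)=0$ if and only if $w^A(b+c-i)=1$ for all $i\in[1,a]$. The strategy is to first build the sequence explicitly in three stages. For $n<b$, $w^A(n)=w^{\{a\}}(n)$, a pure $0^a1^a$ pattern. For $n\in[b,c)$, $w^A(n)=w^{\{a,b\}}(n)$, given by Theorem \ref{thm:abstructure}. For $n\in[c,b+c)$, the three-term recurrence activates, and the structure is locally controlled by the already-built values. The last $a$ entries $w^A(b+c-i)$ sit in the terminal ``extra'' $1^a$ block of the $\{a,b\}$ pattern after the zero-block induced by subtracting $c$, and a direct check shows they are $1$ unless $c$ happens to align in a specific arithmetic way with $a$ and $b$.

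To handle the case $\preper(A)>0$, I would apply a shifted version of Lemma \ref{lem:translating_zeros}: $w^A$ is periodic over $p$ beyond some index $N$ iff for every $x\in A$ and every $m\in[N,N+x)$ with $w^A(m)=0$, we have $w^A(m+p-x)=1$. With $p=b+c$, the cases $x\in\{b,c\}$ are automatic from the recurrence itself ($p-b=c$, $p-c=b$), so the entire burden reduces to the single case $x=a$, which amounts to verifying that $w^A$ equals $1$ on a length-$a$ window near index $N+c$. Choosing $N$ just after the first recurrence of a vector $\vec v(N)$ with a suitable $1$-rich tail then closes the argument, by the pigeonhole bound of Theorem \ref{thm:periodbound}.

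The main obstacle will be the boundary cases in which the clean reasoning fails and $\per(A)$ is a proper divisor of $b+c$ rather than $b+c$ itself, or exceeds $b+c$ by a small amount. Examples such as Theorem \ref{thm:1bc} case (iii), where $c=b+1$ gives $\per=2b$ (not a divisor of $b+c=2b+1$), show that one cannot expect $\per(A)\mid(b+c)$ universally; one instead needs a direct argument bounding $\per(A)\leq 2b<2c$ in these edge configurations, analogous to the nine-case split of Theorem \ref{thm:1bc}. The genuinely non-trivial content is ruling out the quadratic blow-up of Theorem \ref{thm:aba+b}: when $a+b=c$, the map $\mathcal{F}$ of Equation \eqref{eq:vectorfunction} admits a long orbit because subtracting $a$ and subtracting $b$ each lead to positions related by subtracting $c$, forcing the $\tilde a$-fold nested structure in \eqref{eqthm:aba+bstructure}. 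The hypothesis $a+b\neq c$ severs this coupling, and the technical heart of the proof is to convert ``$a+b\neq c$'' into a quantitative statement that the orbit length of $\mathcal{F}$ (restricted to the relevant subspace of $\Z_2^c$) is at most $b+c$.
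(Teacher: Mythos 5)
This statement is an open conjecture in the paper: no proof is given, only computational verification for all $\{a,b,c\}\in{[200]\choose 3}$, so there is no author proof to compare yours against --- it must stand on its own, and it does not. What you have written is a plan whose hard steps are all deferred. The reduction via Proposition \ref{prop:linearity} and the three-stage construction of $w^A$ up to index $b+c$ are fine, and Corollary \ref{cor:translating_zerosabc} does reduce the ``good'' case to checking $w^A(b+c-i)=1$ for $i\in[1,a]$. But the sentence ``a direct check shows they are $1$ unless $c$ happens to align in a specific arithmetic way with $a$ and $b$'' is exactly where the content lives: characterizing that alignment is Conjecture \ref{conj:abcperbc}, which the paper also leaves open, and whose conditions are genuinely intricate (three cases involving nested divisions of $c$ and $c-a$ by $a+b$ and $2a$). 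Worse, when the check fails you have no argument at all: you appeal to ``a direct argument analogous to the nine-case split of Theorem \ref{thm:1bc},'' but that theorem covers only $a=1$, its proof occupies an entire section of alignment diagrams, and no analogue for general $a$ exists --- producing one essentially \emph{is} the conjecture.

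Two further points are concretely wrong rather than merely incomplete. First, your ``shifted'' version of Lemma \ref{lem:translating_zeros} is asserted without proof, and the original proof does not adapt: case (i) there uses that $w(m-x)=0$ forces $m-x\ge 0$ because there is no seed, i.e.\ $w\equiv 1$ below index $0$; at a general cutoff $N$ the values just below $N$ are arbitrary, so the minimality argument breaks and you would also need to control how the $1$'s translate on $[N-\alpha,N)$. Second, the ``technical heart'' you identify --- converting $a+b\ne c$ into the statement that the orbit length of $\mathcal F$ is at most $b+c$ --- targets a false claim. The orbit length from $\vec v(0)$ is $\preper(A)+\per(A)$, and the paper cites $A=\{k,k+2,2k+3\}$ (which satisfies $a+b\ne c$) with $\per(A)=2$ but $\preper(A)=\tfrac12(3k^2-5)$: the orbit is quadratically long even though the period is tiny. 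Any successful argument must bound the cycle of $\mathcal F$ without bounding its tail, which is precisely why the problem is hard and why both this conjecture and Conjecture \ref{conj:abcperbc} remain open in the paper.
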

We have verified Conjecture \ref{conj:linearbound} computationally for all $\{a,b,c\} \in{[200]\choose 3}$. If this conjecture is true, it must relate in part to some invariant of the structure caused by the initial seed of $1^\alpha$, because it fails entirely for different seeds, even if $a,b,$ and $c$ are coprime. This invariant would therefore be carried through the quadratic length preperiods seen in Section \ref{sec:1,b,c}.

\begin{figure}
    \centering
    \includegraphics[scale=0.35]{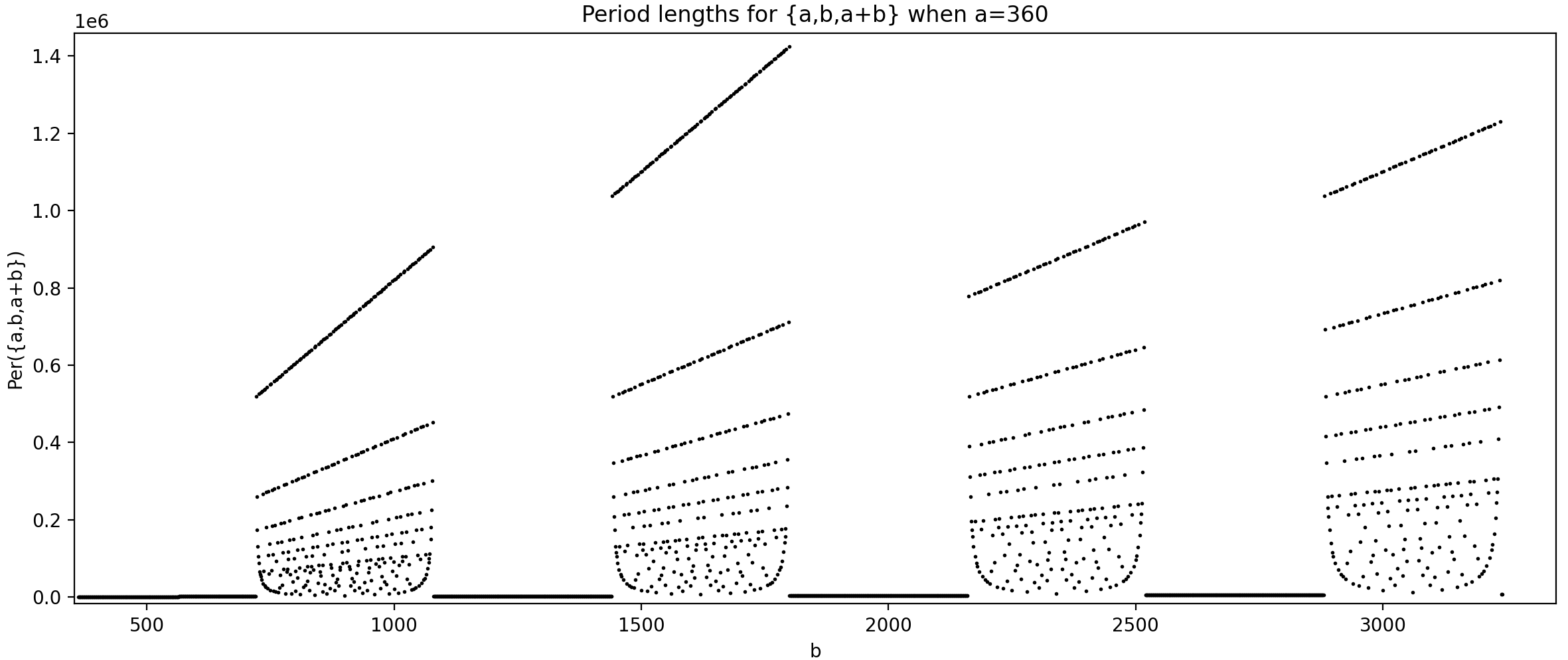}
    \caption{The period lengths for $\{a,b,a+b\}$ where $a=360$, a superior highly composite number.}
    \label{fig:abapb_extreme}
\end{figure}

\section{Super-polynomial Period lengths with initial Seeds.}\label{sec:superpolynomial}

In this section we consider a particular family of sets which demonstrate that super-polynomial period lengths exist for 3-sets, given properly chosen initial seeds. This family will relate to an intersection of the cases of Sections \ref{sec:1,b,c} and \ref{sec:a,b,a+b}.

\begin{lemma}\label{lem:multiple_lines}
For any $n\in \N$, choose some odd $b>4n+1$ and let $A=\{1,b,b+1\}$ and $S=(01^3)^n$. Then $\per(A,S)=2(n+1)b+1$ and $\preper(A,S)=0$.
\end{lemma}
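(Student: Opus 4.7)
My plan is to exhibit an explicit period $W$ of length $p := 2(n+1)b+1$ describing $\{w^{A,S}\}_{m=0}^\infty$, then verify the recurrence and minimality of the period. Since $|S| = 4n < b+1 = \alpha$, the padded seed $1^{b+1-4n}(01^3)^n$ places zeros precisely at negative positions $-4k$ for $k \in \{1, \ldots, n\}$, giving $\vec v(0) = 1^{b+1-4n}(01^3)^n$. For the base case $n = 0$, Theorem \ref{thm:1bc}(ii) directly yields $\{w^A\} = ((01)^{(b+1)/2}1^b)^\infty$ of period $2b+1 = p$, matching the claim.

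For $n \geq 1$, I would compute $w(0), w(1), \ldots$ forward using the recurrence together with the hypothesis $b > 4n+1$, which ensures the $n$ seed zeros sit comfortably inside the seed window and do not interact through the first $b$ forward steps. In that range the sequence behaves essentially as the $\{1,b\}$-game with the same seed, producing an initial block of the form $(01)^{k_1}1^{r_1}$ whose exponents are read off from the seed's zero positions. Once the index crosses $b$, the forward $b$- and $(b+1)$-shifts of the seed zeros reinforce a recurring block pattern, producing $n+1$ successive blocks of the shape $(01)^{j_k}1^{b-2j_k}$ of total length exactly $p$. Having conjectured the corresponding closed form $W = \sum_{k=0}^{n} W_k$, I would verify it by the alignment-diagram technique of Subsection \ref{sec:1,b,cpreperproof}: write $W^\infty$ above the three shifts $W^\infty(m-1)$, $W^\infty(m-b)$, $W^\infty(m-b-1)$ and check position-by-position that $W^\infty(m) = 0$ iff all three shifts are $1$. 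By Lemma \ref{lem:pernopreper}(iv) it then suffices to check that the last $b+1$ entries of $W$ reproduce $\vec v(0)$, giving periodicity over $p$ with no preperiod.

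To conclude that $p$ is the \emph{exact} period and not a proper divisor, I use that $p = 2(n+1)b+1$ is odd and that the explicit form of $W$ contains $(n+1)(b+1)/2$ zeros whose run-length sequence between consecutive zeros is not invariant under any cyclic rotation by a proper divisor of $p$ — this can be read off directly from the conjectured formula. The main obstacle is pinning down the correct closed form for $W$: the seed-induced shifts complicate the block structure, but the bound $b > 4n+1$ allows the seed zeros to be treated independently through the first $b$ steps, so the block pattern transcribes from the $\{1,b\}$-analysis in a controlled way. Once $W$ is correctly stated, the alignment-diagram verification is routine but case-heavy, and I expect the hardest sub-case to be verifying the transitions between consecutive blocks $W_k$ and $W_{k+1}$, where both a forward $b$-shift and a forward $(b+1)$-shift of seed-induced zeros simultaneously become active.
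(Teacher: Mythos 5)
There is a genuine gap, and it sits exactly where you predicted the difficulty would be: the closed form you conjecture for the period $W$ is wrong, and the heuristic you use to derive it does not hold. You propose that $W$ decomposes into $n+1$ blocks of the shape $(01)^{j_k}1^{b-2j_k}$, on the grounds that the sequence ``behaves essentially as the $\{1,b\}$-game with the same seed'' through the first $b$ steps. Neither claim survives contact with the recurrence. For $m<b$ \emph{both} $w(m-b)$ and $w(m-b-1)$ read seed values, so the $(b+1)$-shift already injects forced ones at positions $b+1-4k$ in addition to those at $b-4k$; and once the seed zeros at $-4,-8,\ldots,-4n$ propagate forward, the period acquires long stretches in which consecutive zeros are spaced \emph{four} apart (substrings $(01^3)^i$ and $(1^30)^i$), interleaved with stretches where they are spaced two apart. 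Writing $b=4n+1+2d$, the true structure is
\[\sum_{i=0}^{n-1}\Big((11)^d(1^30)^i11(01^3)^{n-i}\ (01)^d0(1^30)^i11(01^3)^{n-i-1}0\Big)\ (11)^d(1^30)^n1^2\ (01)^d0(1^30)^n,\]
i.e.\ $2n+2$ ``rows,'' each a $(11)^d$ or $(01)^d$ prefix followed by a tail mixing gaps of $2$ and $4$ between zeros. A concatenation of blocks $(01)^{j_k}1^{b-2j_k}$ (all gaps equal to $2$, then one long run of ones) cannot represent this; e.g.\ for $n=1$, $b=7$ the computed period has zero-gaps $2,3,6,3,2,4,5,\ldots$. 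Since the entire verification (alignment diagrams, the preperiod check via Lemma \ref{lem:pernopreper}, and the minimality argument, which reads the rotation-asymmetry off the run-length sequence of $W$) is downstream of having the correct $W$, the plan as stated cannot be completed.

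It is worth contrasting this with how the paper avoids the guessing problem altogether. Rather than building $W$ forward from the seed, the paper starts from the base set $B=\{1,4n+1,4n+2\}$, for which $X=110(1^30)^n$ is easily checked to be a valid period structure of length $4n+3$; it lays $2n+1$ copies of $X$ into a $(2n+2)\times(4n+2)$ grid whose rows encode the $b$- and $(b+1)$-shifts as vertical adjacencies, and then \emph{extends} the grid by inserting $d$ column-pairs ($(11)$ into even rows, $(01)$ into odd rows) to produce the period structure for $b=4n+1+2d$. The length count $(2n+2)\cdot 2d+(2n+1)(4n+3)=2(n+1)b+1$ and the minimality argument (the $n+1$ occurrences of $(01)^{d+1}$ are unequally spaced because the last row is one entry longer) then come for free from the construction. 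If you want to salvage your approach, you would need to first compute the sequence far enough to discover the genuine row structure above; at that point you would essentially be re-deriving the paper's grid.
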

A proof of this Lemma is found in Section \ref{sec:linesproof}. If we denote $b=4n+1+2d$, where $d\geq 1$, then the structure of the periodicity is exactly
\[\{w^{A,S}\}=\ldots \left(\sum_{i=0}^{n-1}\Big((11)^d(1^30)^i11(01^3)^{n-i}\ (01)^d0(1^30)^i11(01^3)^{n-i-1}0\Big)\ (11)^d(1^30)^n1^2\ (01)^d0(1^30)^n\right)^\infty,
\]
To choose a seed generating this structure it suffices to choose any sub-string of length $a+1$, so we choose the first $a+1$ entries, namely $(11)^d11(01^3)^n\approx (01^3)^n$. Lemma \ref{lem:multiple_lines} contradicts the generalization of Conjecture \ref{conj:linearbound} over all seeds, since it implies quadratic period lengths. Figure \ref{fig:allperiods1bbp1} plots all period lengths for $\{1,b,b+1\}$, which shows that this Lemma only scratches the surface of the periodicities of $3$-sets.
\begin{figure}
\centering
\includegraphics[scale=0.42]{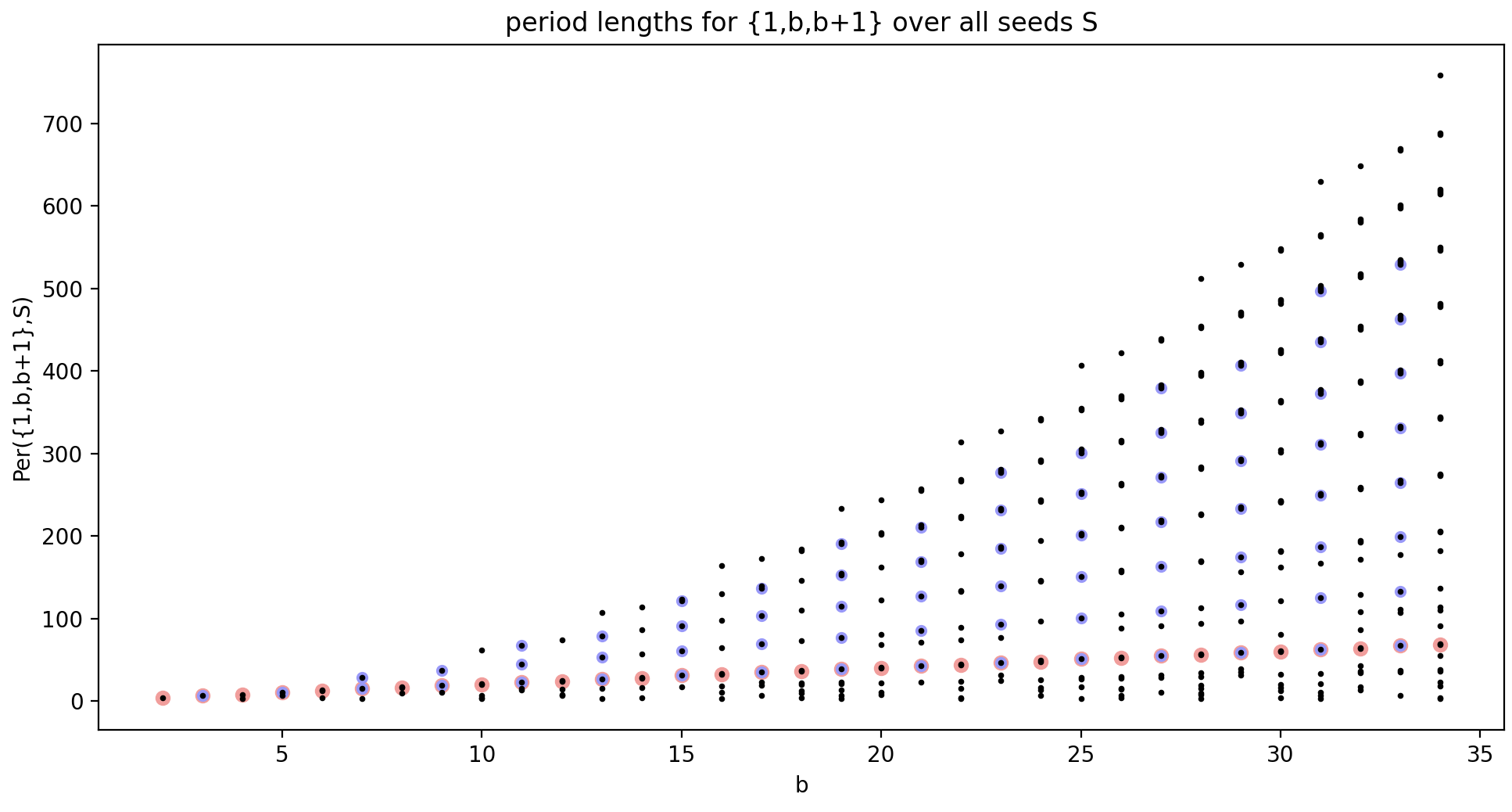}
\caption{This figure shows \textit{all} periods of $\{1,b,b+1\}$ for $b+1\leq 35$. The points highlighted in blue are given by Lemma \ref{lem:multiple_lines} and the points in red are the default periods. Note that some points close together are overlapping. For example, $\mathcal P^{\{1,31,32\}}=\{3, 7, 11, 21, 33, 63, 125, 167, 187, 249, 251, 311, 313, $\\$373, 375, 377, 435, 437, 439, 497, 499, 501, 503, 563, 565, 629\}$, while Lemma \ref{lem:multiple_lines} gives only $\{63, 125, 187,$\\$ 249, 311, 373, 435, 497\}$.}
\label{fig:allperiods1bbp1}
\end{figure}

\begin{theorem}[Superpolynomial Period Lengths]\label{thm:superpolynomial}  For $n\geq 1$, define $b_n=4n-1$, $A_n=\{n,nb_n,nb_n+n\}$, and $S_{(n)}=\sum_{j=1}^{n-1}0^j1^{b_n-j}$. For the family of pairs
$\{A_n,S_{(n)}\}_{n=1}^\infty$, where $\alpha_n=\max(A_n)$, it holds that
\[\per(A_n,S_{(n)})=e^{\Omega(\sqrt{\alpha_n})}.\]
\end{theorem}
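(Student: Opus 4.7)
The strategy is to apply Multiplicative Linearity (Proposition~\ref{prop:linearity}) to write $A_n = n \cdot B$ with $B = \{1, b_n, b_n+1\}$, decomposing the game into $n$ parallel copies of the $B$-game, one for each residue class modulo $n$. The triangular seed $S_{(n)}$ has been engineered so that the $n$ sub-seeds realise exactly the family $(01^3)^0, (01^3)^1, \ldots, (01^3)^{n-1}$ of Lemma~\ref{lem:multiple_lines}, producing $n$ distinct period lengths whose $\lcm$ is super-polynomial.

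First I would pad $S_{(n)}$ with leading ones to length $\alpha_n = n(b_n+1)$, obtaining $\tilde S$, and read off the sub-seed $S_i(m) := \tilde S(mn+i)$ of length $b_n+1 = 4n$ for each $i \in \{0,\dots,n-1\}$. A block-by-block inspection of $S_{(n)} = \sum_{j=1}^{n-1} 0^j 1^{b_n-j}$ shows that the $j$-th block contributes zeros precisely at positions $mn+i$ with $m = 4j$ and $i \in \{n-j,\dots,n-1\}$, which combine to give
\[
S_i = 1^{4(n-i)}(01^3)^i \qquad (i = 0, 1, \ldots, n-1).
\]
After the default convention of prepending ones, this is exactly the seed $(01^3)^i$ for the $B$-game. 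Since $b_n = 4n-1 > 4i+1$ for every $i \leq n-1$, Lemma~\ref{lem:multiple_lines} yields $\per(B, S_i) = m_{i+1}$ with preperiod zero, where $m_k := 2k b_n + 1$ (the $i = 0$ case recovers Theorem~\ref{thm:1bc}(ii)). Because the $m_k$ are pairwise distinct, the sub-sequences $w^{B, S_i}$ are pairwise non-shift-equivalent, so the length-$n$ interleaving furnished by Proposition~\ref{prop:linearity} has period exactly $n \cdot \lcm(m_1, \dots, m_n)$.

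It then remains to show $\lcm(m_1, \dots, m_n) = e^{\Omega(n)}$ (which suffices because $\sqrt{\alpha_n} = 2n$). The key fact is that any prime $p$ dividing $\gcd(m_j, m_k)$ divides $m_k - m_j = 2(k-j)b_n$, and since every $m_k$ is odd and coprime to $b_n$, also divides $k-j$; hence no prime $p \geq n$ divides more than one $m_k$. Therefore
\[
\log \lcm(m_1, \dots, m_n) \;\geq\; \sum_{k=1}^n \log m_k \;-\; \sum_{p<n} \log p \cdot \sum_{k=1}^n v_p(m_k).
\]
A routine valuation estimate $\sum_k v_p(m_k) \leq n/(p-1) + O(\log n/\log p)$ combined with Mertens' $\sum_{p \leq n} (\log p)/p = \log n + O(1)$ bounds the subtracted sum by $n\log n + O(n)$, while Stirling gives $\sum_k \log m_k = 2n \log n + O(n)$, so $\log \lcm \geq n \log n - O(n)$, much stronger than required. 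The main obstacle will be the modular-arithmetic bookkeeping in the second paragraph, verifying that the triangular-block layout of $S_{(n)}$ lands, residue by residue, on precisely the $(01^3)^i$ pattern; the $\lcm$ lower bound is comfortably more than needed for the claimed $e^{\Omega(\sqrt{\alpha_n})}$ exponent, so even a substantially cruder number-theoretic estimate would still close the argument.
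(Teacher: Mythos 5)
Your proposal is correct and takes essentially the same route as the paper: decompose $A_n=n\{1,b_n,b_n+1\}$ via Proposition~\ref{prop:linearity}, verify that the triangular seed restricts residue-by-residue to the sub-seeds $(01^3)^i$ so that Lemma~\ref{lem:multiple_lines} yields parallel periods $2(i+1)b_n+1$, and bound $\per(A_n,S_{(n)})$ from below by their least common multiple. The only divergence is the final number-theoretic step, where the paper cites a known bound $\lcm(2b+1,\dots,2nb+1)\ge e^{n-o(n)}$ for arithmetic progressions while you prove an elementary valuation/Mertens estimate giving the stronger $e^{n\log n-O(n)}$ (your stronger claim that the period equals \emph{exactly} $n\cdot\lcm(m_1,\dots,m_n)$ is not justified by shift-inequivalence alone and is not needed --- the paper itself leaves this open and uses only the divisibility $\per(B,S_i)\mid p$ for every common period $p$).
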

\begin{proof}
Fix $n$, so we have $b=4n-1$ and $A=\{n,bn,bn+n\}$. We now construct the seed $S$ dependent on $n$. Because $A$ has greatest common divisor $n$, let $B=\{1,b,b+1\}$ and apply Multiplicative Linearity Proposition \ref{prop:linearity} to see that the parallel sequences satisfy $(w^{A,S})_i(m)=w^{A,S}(mn+i)=w^{B,S_i}(m)$ where $S_i\in \{0,1\}^{b+1}$ for $i\in \{0,\ldots,n-1\}$. We now apply Lemma \ref{lem:multiple_lines} by letting $S_i=(01^3)^i\approx 1^{4(n-i)}(01^3)^i$, so we have $\per(B,S_i)=2(i+1)b+1$. By combining all $S_i$ in parallel, this construction yields $S=\sum_{i=0}^{n-1}1^{n-i}0^i1^{3n}= \sum_{j=1}^n0^i1^{b-i}$. Now, suppose $\{w^{A,S}\}$ is periodic over some $p\in \N$. This implies that for all $m\in \N$ and $i\in \{0,\ldots, n-1\}$, we have
\[w^{B,S_i}(m)=w^{A,S}(mn+i)=w^{A,S}(mn+i+np)=w^{B,S_i}(m+p),
\]
so $\{w^{B,S_i}\}$ must also be periodic over $p$, i.e. $\per(B,S_i)\mid p$. Because this is true for all $i$, we conclude that $\lcm\{2(i+1)b+1\mid 0\leq i<n\}\leq \per(A_n,S)$.
A result from \cite[Problem 10797]{limitlcm} regarding the lcm of arithmetic progressions implies a somewhat loose bound of $\lcm(2b+1,4b+1,6b+1,\ldots,2na+1)\geq e^{n-o(n)}$.\footnote{In particular it says that $\lim_{n\to \infty }\frac{\log(\lcm(\ldots))}n=\frac{1}{\varphi(k)}\sum\frac{2b}{m}$ with the summation taken over units of $\Z_{2b}^\times$. This is an average of numbers greater than $1$.} Therefore for all $n$, we note that $\alpha_n\sim 4n^2$, and conclude $\per(A_n,S_n)=e^{\Omega(n)}=e^{\Omega(\sqrt{\alpha_n})}$. \end{proof}

Table \ref{tab:weaving} demonstrates the construction for $n=2$. This family proceeds as follows
\begin{itemize}
\item $n=1$, $b=3$, $A_1=\{1,3,4\}$, $S_1=\emptyset$, and $\per(A_1,S_1)=\lcm\{7\}=7$
\item $n=2$, $b=7$, $A_2=\{2,14,16\}$, $S_2=01^6$, $\per(A_2,S_2)=2\cdot \lcm\{15,29\}=870$
\item $n=3$, $b=11$, $A_3=\{3,33,36\}$, $S_3=01^{10}0^21^{9}$,  $\per(A_3,S_3)= 3\cdot \lcm\{23,45,67\}=208\,035$
\item $n=4$, $b=15$, $A_4=\{4,60,64\}$, $S_4=01^{14}0^21^{13}0^31^{12}$,  $\per(A_4,S_4)= 4\cdot \lcm\{31,61,91,121\} =83\,287\,204$
\item $n=5$, $b=19$, $A_5=\{5,95,100\}$, $S_5=01^{18}0^21^{17}0^31^{16}0^41^{15}$, and $3\,364\,005\,645 \mid \per(A_5,S_5)$; we have not computed the exact period.
\end{itemize}
It is natural to predict that we have a multiple of $n$ in the period length, but we have not proven this to be the case. To prove Theorem \ref{thm:superpolynomial} we used a rough bound on $\lcm\{2b+1,\ldots,2nb+1\}$ and the few periodicities from Lemma \ref{lem:multiple_lines}, as shown in Figure \ref{fig:allperiods1bbp1}. It is unclear if more periodicities and a stronger bound on their $\lcm$, would yield an asymptotically better result, but this does not appear to be the case. Figure \ref{fig:1bbp1bestcase} shows that $e^{\Omega(\sqrt{\alpha_n})}$ appears to be best possible.
\begin{figure}
\centering
\includegraphics[scale=0.4]{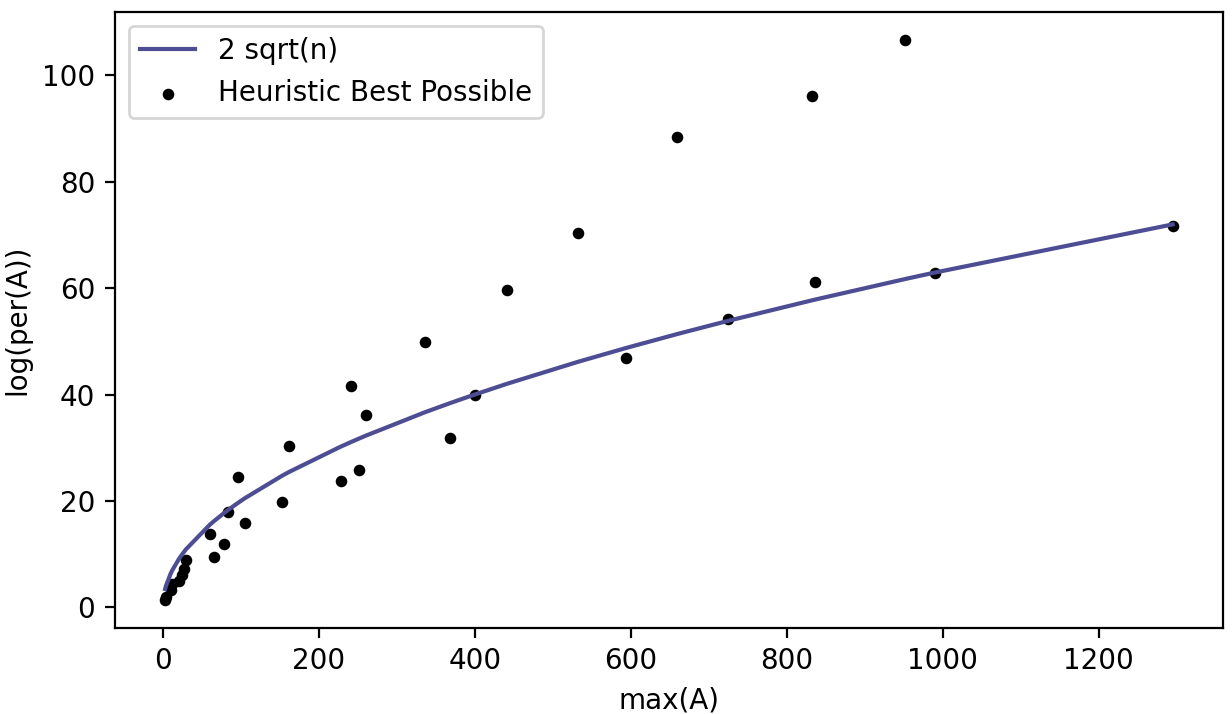}
\caption{Using the periods from Figure \ref{fig:allperiods1bbp1}, we can construct sets $A_b$ with many parallel $\{1,b,b+1\}$ periods, specifically we get $A_b=|\mathcal P^{\{1,b,b+1\}}|\cdot \{1,b,b+1\}$ and some seed $S$ comprised of $S_0,\ldots,S_{|\mathcal P^{\{1,b,b+1\}}|}$ such that $\per(A_b,S)\geq \lcm(\mathcal P^{\{1,b,b+1\}})$. This gives a heuristic for the longest period possible, which appears to approximate $e^{\bigO(\sqrt{ \max(A_b)})}$.}
 \label{fig:1bbp1bestcase}
\end{figure}

\begin{table}
    \centering
    \begin{tabular}{p{.05cm}p{.05cm}p{.05cm}p{.05cm}p{.05cm}p{.05cm}p{.05cm}p{.05cm}||p{.05cm}p{.05cm}p{.05cm}p{.05cm}p{.05cm}p{.05cm}p{.05cm}p{.05cm}p{.05cm}p{.05cm}p{.05cm}p{.05cm}p{.05cm}p{.05cm}p{.05cm}p{.05cm}p{.05cm}p{.05cm}p{.05cm}p{.05cm}p{.05cm}p{.05cm}p{.05cm}p{.05cm}p{.05cm}p{.05cm}p{.05cm}p{.05cm}p{.05cm}p{.05cm}}
1&&1&&1&&1&&0&&1&&0&&1&&0&&1&&0&&1&&\ldots \\
&0&&1&&1&&1&&0&&1&&0&&1&&1&&0&&1&&1&\ldots
\end{tabular}
\caption{In the construction for Theorem \ref{thm:superpolynomial}, we have $A_2=\{2,14,16\}$. The sequences $(w^{A,S})_i(m)$ for $i\in \{0,1\}$ are independent, and these have periods of lengths $2\cdot 7+1=15$ and $4\cdot 7+1=29$. We find that $\per(A_2,01^6)) =2\cdot\lcm(15,29)=870$.}
    \label{tab:weaving}
\end{table}
\subsection{Proof of Lemma \ref{lem:multiple_lines}.}\label{sec:linesproof}
The final proof uses different approach to the previous ones. Rather than constructing the sequence from scratch, we will exploit a structural pattern to \textit{extend} a simple period. This was the method used to discover Lemma \ref{lem:multiple_lines}, and might be a productive strategy moving forward.

\begin{proof}
First, we note that if $n=0$, then Theorems \ref{thm:1bc} and \ref{thm:aba+b} coincide to give that $\per(A)=2b+1$ with no seed, so we are done. Next, fix $n\geq 1$.

Observe that if $\idk=4n+1$ and $B=\{1,\idk,\idk+1\}$ then the string $X=110(1^30)^n$ is a valid period structure of $B$ with period length $\idk+2$. This follows from the fact that $X^\infty(n)=1$ if and only if $X^\infty(n-1)=0$, $X^\infty(n+1)=0$, or $X^\infty(n+2)=0$, meaning each zero is separated by $2$ or $3$ ones. We use this fact to explicitly build period structures for odd $b>\idk$.

Write $2n+1$ copies of $X$ in a $(2n+2)\times (\idk+1)$ grid so that the rows $0,\ldots,2n+1$ are the following: The $k\nth$ even row indexed from zero reads $(1^30)^k11 (01^3)^{n-k}$ and the $k\nth$ odd row reads $0(1^30)^k11(01^3)^{n-k-1}0$, except for the last row, (the $n\nth$ odd row), and this reads $0(1^30)^{n}$. Table \ref{tab:examplegridfilling} gives two examples of such a filling. Notice that this filling has the following three properties.
\begin{itemize}
\item Reading across all of the rows yields $X^{2n+1}$ in order.
\item Even rows are fully filled. These begin and end with strings $1^3$ or $11$. Their length is $\idk+1=|X|-1$ so they contain all of $X$ except for a $0$.
\item Odd rows have $\idk-1$ entries, except for row $n$ which has $\idk$ entries. These begin and end with $0$, and contain all of $X$ except either $1^3$ or $11$.
\end{itemize}
\begin{table}[H]
\centering
\begin{tabular}{|c|c|c|c|c|c|}
\hline
1&1&0&1&1&1  \\
\hline
0&1&1&0&&  \\
\hline
1&1&1&0&1&1  \\
\hline
0&1&1&1&0&  \\
\hline
\end{tabular}
\quad
\begin{tabular}{|c|c|c|c|c|c|c|c|c|c|}
\hline
\textbf{1}&\textbf{1}&\textbf{0}&\textbf{1}
&\textbf{1}&\textbf{1}&\textbf{0}&\textbf{1}
&\textbf{1}&\textbf{1} \\
\hline
\textbf{0}&\textit{1}&\textit{1}&\textit{0}&\textit{1}&\textit{1}&\textit{1}&\textit{0}&& \\
\hline
\textit{1}&\textit{1}&\textit{1}&\textit{0}&\textbf{1}&\textbf{1}&\textbf{0}& \textbf{1}&\textbf{1}&\textbf{1} \\
\hline
\textbf{0}&\textbf{1}&\textbf{1}&\textbf{1}
&\textbf{0}&\textit{1}&\textit{1}&\textit{0}&& \\
\hline
\textit{1}&\textit{1}&\textit{1}&\textit{0}&\textit{1}&\textit{1}&\textit{1}&\textit{0}&\textbf{1}&\textbf{1} \\
\hline
\textbf{0}&\textbf{1}&\textbf{1}&\textbf{1}&\textbf{0}&\textbf{1}&\textbf{1}&\textbf{1}&\textbf{0}& \\
\hline
\end{tabular}
\caption{\textbf{Left.} Setting $n=1$, we get $B=\{1,5,6\}$. We place $2n+1=3$ copies of $X=1101^30$ into a grid, so odd rows start and end with $0$. \textbf{Right.} Setting $n=2$, we get $B=\{1,9,10\}$. We place $5$ copies of $X=1101^301^30$ into a grid. Alternating copies of $X$ are bolded to illustrate the pattern.}
\label{tab:examplegridfilling}
\end{table}
Denote $x(i,j)$ as the element in row $i$ and column $j$, both indexed from $0$. We can now re-interpret the recurrence relation on $w^B(n)$ in terms of $x$. The following identities must hold for all $i,j$.
\begin{equation}\label{eq:tablerecurrence}
x(i,j)=\begin{cases}
1-\min\{x(i,j-1),x(i-1,j),x(i-1,j+1)\}&i\text{ is odd and }j>0\\
1-\min\{x(i,j-1),x(i-1,j-2),x(i-1,j-1)\}&i>0 \text{ is even},\idk>j>1\\
\\
1-\min\{x(i-1,\idk),x(i-1,0),x(i-1,1)\}&i\text{ is odd and }j=0
\\
1-\min\{x(0,j-1),x(2n+1,j),x(2n+1,j-1)\}&i=0,\idk>j>0
\\
1-\min\{x(2n+1,\idk-1),x(2n+1,0),x(2n,\idk)\}&i=0,j=0
\\
1-\min\{x(0,\idk-1),x(0,0),x(2n+1,\idk-1)\}&i=0,j=\idk
\\
1-\min\{x(i,\idk-1),x(i,0),x(i-1,\idk-2)\}&i>0 \text{ is even},j=\idk
\\
1-\min\{x(i,0),x(i-1,0),x(i-2,\idk)\}&i>0 \text{ is even},j=1
\\
1-\min\{x(i-1,\idk-2),x(i-2,\idk),x(i-2,\idk-1)\}&i>0 \text{ is even},j=0
\end{cases}
\end{equation}
We focus on the first two cases which are the most general. It might also help to recall that for odd $i$, $x(i,0)=0$ and for even $i$, $x(i,0)=x(i,1)=x(i,\idk-1)=x(i,\idk)=1$. These follow from the specifications of the grid filling.

Next, we will choose any odd $b=\idk+2d$ for $d>0$, and let $A=\{1,b,b+1\}$. We will extend the grid to create a period structure for $A$ with length $2(n+1)b+1$. We add $d$ copies of the first two columns on the left side, as shown below.
\begin{table}[H]
\centering
\begin{tabular}{|c|c||c|c||c|c||c|c|c|c|c|c|}
\hline
1&1&1&1&1&1& 1&1&0&1&1&1  \\
\hline
0&1&0&1&0&1& 0&1&1&0&&  \\
\hline
1&1&1&1&1&1& 1&1&1&0&1&1  \\
\hline
0&1&0&1&0&1& 0&1&1&1&0&  \\
\hline
\end{tabular}
\ =\ 
\begin{tabular}{|c|c|c|c|c|}
\hline
(11)$^{d+1}$&0&1&1&1  \\
\hline
(01)$^{d+1}$&1&0&&  \\
\hline
(11)$^{d+1}$&1&0&1&1  \\
\hline
(01)$^{d+1}$&1&1&0&  \\
\hline
\end{tabular}
\quad
\caption{Example of \textit{extending} the grid for $b=\idk+2d$ with $n=1$ and $d=3$.}
\label{tab:examplegridfilling2}
\end{table}
We call the elements of this modified grid $y(i,j)$, with $i$ indexed from $0$ and $j$ indexed from $-2d$. Thus for $j\geq 0$ we have $y(i,j)=x(i,j)$. If $j<0$ is even we have $y(i,j)=x(i,0)$ and if $j<0$ is odd we have $y(i,j)=x(i,1)$. Because of the specifications for filling $x$, we note that this entails prepending $(11)^d$ to even rows and $(01)^d$ to odd rows.

\textbf{Claim.} \textit{Reading across the rows of this table yields $Y$, a valid period structure for $A$ with length ${2(n+1)b+1}$.}
\vskip 3pt
\noindent First we have added $(2n+2)2d$ entries to the table, and we started with $X^{2n+1}$, so the total length is 
\[
(2n+2)2d+(2n+1)(\idk+2)
=(2n+2)2d+(2n+2)\idk+1
=2b(n+1)+1
\]
Next we show that the recurrence relation is satisfied. To do this, write the recurrence relation $w^A(n)=1-\min\{w^A(n-1),w^A(n-\idk-2d),w^A(n-\idk-1-2d)$ in terms of $y(i,j)$. The rows are extended by the same length as the recurrence, so the following rules very are similar to Equation \ref{eq:tablerecurrence}.

\begin{equation}\label{eq:tablerecurrencey}
y(i,j)=\begin{cases}
1-\min\{y(i,j-1),y(i-1,j),y(i-1,j+1)\}&i\text{ is odd and }j>-2d\\
1-\min\{y(i,j-1),y(i-1,j-2),y(i-1,j-1)\}&i>0 \text{ is even},\idk>j>1-2d\\
\\
1-\min\{y(i-1,\idk),y(i-1,0),y(i-1,1)\}&i\text{ is odd and }j=0
\\
1-\min\{y(0,j-1),y(2n+1,j),y(2n+1,j-1)\}&i=0,\ \idk>j>-2d
\\
1-\min\{y(2n+1,\idk-1),y(2n+1,\idk),y(2n+1,-2d)\}&i=0,j=-2d
\\
1-\min\{y(0,\idk-1),y(0,-2d),y(2n+1,\idk-1)\}&i=0,j=\idk
\\
1-\min\{y(i,\idk-1),y(i,-2d),y(i-1,\idk-2)\}&i>0 \text{ is even},j=\idk
\\
1-\min\{y(i,-2d),y(i-1,-2d),y(i-2,\idk)\}&i>0 \text{ is even},j=-2d+1
\\
1-\min\{y(i-1,\idk-2),y(i-2,\idk),y(i-2,\idk-1)\}&i>0 \text{ is even},j=-2d
\end{cases}
\end{equation}
From here it is possible to check that the nine cases in Equation \ref{eq:tablerecurrencey} hold for all $j\geq0$ and $j<0$ using Equation \ref{eq:tablerecurrence}, the definition of $y(i,j)$ and the three properties of the filling. We will show only the two main cases; the rest follow suit.

If $i$ is odd and $0<j<\idk$, then we confirm
\begin{multline*}\label{eq:exrecur1}
y(i,j)=x(i,j)=1-\min\{x(i,j-1),x(i-1,j),x(i-1,j+1)\}
\\=1-\min\{y(i,j-1),y(i-1,j),y(i-1,j+1)\}.
\end{multline*}
If $i$ is odd and $-2d<j\leq 0$ is even, then we know $y(i,j)=x(i,0)=0$, so we confirm
\[
y(i,j)=1-\min\{y(i,j-1),y(i-1,j),y(i-1,j+1)\}=1-\min\{1,1,1\}=0.
\]
If $i$ is odd and $-2d<j<0$ is odd, then we know $y(i,j)=x(i,1)=1$, so we confirm
\[
y(i,j)=1-\min\{y(i,j-1),y(i-1,j),y(i-1,j+1)\}=1-\min\{0,1,0\}=1.
\]
If $i>0$ is even and $1<j<\idk$, then 
\begin{multline*}\label{eq:exrecur2}
y(i,j)=x(i,j)=1-\min\{x(i,j-1),x(i-1,j-1),x(i-1,j-2)\}\\=1-\min\{y(i,j-1),y(i-1,j-1),y(i-1,j-2)\}.
\end{multline*}
If $i>0$ is even and $-2d+1<j\leq 1$ is odd, then we know $y(i,j)=x(i,1)=1$, so we confirm
\[
y(i,j)=1-\min\{y(i,j-1),y(i-1,j-1),y(i-1,j-2)\}
=1-\min\{1,0,1\}=1.
\]
If $i>0$ is even and $-2d+1<j\leq 0$ is even, then we know $y(i,j)=x(i,0)=1$, so we confirm
\[
y(i,j)=1-\min\{y(i,j-1),y(i-1,j-1),y(i-1,j-2)\}
=1-\min\{1,1,0\}=1.
\]
Such verification can be completed for the seven edge cases to show that $y(i,j)$ obeys Equation \ref{eq:tablerecurrencey}, and therefore $Y^\infty\in \mathcal W^{A}$.

Finally, we check for subperiods of $Y$. Note that as long as $d>0$, there are exactly $(n+1)$ copies of the substring $(01)^{d+1}$ present in $Y$, each at the beginning of a row. Additionally, the last row uniquely has length $b$, meaning the instances of $(01)^{d+1}$ are unequally spaced throughout $Y$. This prevents a sub-period of $Y$ from occurring.
\end{proof}
\section{Closing}
We close the paper by setting up a few conjectures. First, we recall the observation that the converse of Proposition \ref{prop:invariant-extension} holds for all $(A,S)$ where $|A|\leq 3$, and state it explicitly in a conjecture.
\begin{conjecture} If $|A|\leq 3$, then for all $S\in\{0,1\}^\alpha$, let $p=\per(A,S)$. Then if $kp+x$ is an extension of $A$ for all $k\in \N_+$ and $x\in A$, then $\preper(A,S)=0$.
\end{conjecture}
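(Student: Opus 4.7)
The plan is first to dispose of the trivial range and then attack the real content, which is $|A|=3$. For $|A|\le 2$, Theorem~\ref{thm:abperiod} (together with the analysis of $|A|=1$ in Section~\ref{sec:ab}) already gives $\preper(A,S)=0$ for every seed $S$ unconditionally, so the conjectured implication holds vacuously. From now on, write $A=\{a,b,c\}$ with $a<b<c=\alpha$ and $p=\per(A,S)$. By Lemma~\ref{lem:pernopreper}(iv), proving $\preper(A,S)=0$ is equivalent to showing the $\alpha$ equalities $w(i)=w(p+i)$ for $i\in[0,\alpha)$, and that is the target.

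The first, easy, direction is $w(p+i)=0\Rightarrow w(i)=0$ for every $i\ge 0$. If $w(p+i)=0$ then the recurrence at $p+i$ gives $w(p+i-x)=1$ for every $x\in A$; applying the hypothesis that $p+x$ is an extension of $(A,S)$ (the $k=1$ case) then forces $w(i-x)=1$ for every $x\in A$, regardless of whether $i-x$ lies in the sequence, the seed, or the pre-seed. The recurrence at $i$ (valid since $i\ge 0$) then yields $w(i)=0$. Taking the contrapositive, $w(i)=1$ implies $w(p+i)=1$ for every $i\ge 0$.

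The other direction, $w(i)=0\Rightarrow w(p+i)=0$, is immediate once $i\ge\alpha$: then $w(i-x)=1$ for every $x\in A$ by the recurrence, every index $i-x$ is legitimate and nonnegative, and the contrapositive established above gives $w(p+i-x)=1$ for each $x$, so $w(p+i)=0$. This already shows $\preper(A,S)\le\alpha$. The delicate range is $i\in[0,\alpha)$; the natural attack is by contradiction. Suppose $w(i)=0$ and $w(p+i)=1$, so some $x_*\in A$ has $w(p+i-x_*)=0$. If $x_*\le i$ the previous argument again produces a contradiction via the legitimate index $i-x_*\ge 0$, so the obstruction is confined to \emph{problematic} values $x_*\in A\cap(i,\alpha]$: for such $x_*$ the only information about $i-x_*$ is the seed constraint $w(i-x_*)=1$, with no direct handle on $w(p+i-x_*)$.

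To close this gap, apply the extension hypothesis at the boundary zero $m_*=p+i-x_*$ for all $k\ge 1$ and $x\in A$; this forces $w(m_*-kp-x)=w(i-x_*-x-(k-1)p)=1$, which for $k=1$ pins down further seed entries $w(i-x_*-x)=1$ and for $k\ge 2$ is automatic by the pre-seed convention $w\equiv 1$. Combining these forced seed entries with the recurrence at the indices $i$, $i-x_*$, and $p+i-x_*$, and with the periodicity already established in the tail, should produce either an index where $w=0$ whose extension constraint is violated, or a direct inconsistency with $w(i)=0$. The hard part will be carrying out this final case analysis; it is also where the assumption $|A|\le 3$ must enter, since the example $\{1,2,6,11\}$ cited in the paper demonstrates that with four elements the extension constraints can be jointly satisfied even when a preperiod exists. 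With only three elements, the number of problematic $x_*$ is at most three (and shrinks as $i$ grows past $a$ and $b$), and the forced seed values together with the periodic tail should admit no simultaneous solution. A concrete implementation route is to split into the cases $a=1$ (use the explicit classification of Theorem~\ref{thm:1bc}), $c=a+b$ (use Theorem~\ref{thm:aba+b}), and generic $3$-sets, handled by a direct counting argument on the positions of the forced $1$'s in the seed.
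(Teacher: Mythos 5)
This statement is posed as an \emph{open conjecture} in the paper's closing section; the authors give no proof, only the remark that it suffices to check $k=1$ and the computational evidence behind it. Your write-up therefore cannot be measured against a proof of record, and on its own terms it does not close the conjecture. The reductions you do carry out are sound: the $|A|\le 2$ case is settled unconditionally by Theorem~\ref{thm:abperiod} and the $|A|=1$ analysis; the implication $w(p+i)=0\Rightarrow w(i)=0$ for all $i\ge 0$ follows correctly from the $k=1$ extension hypothesis together with the recurrence at $i$; and combining its contrapositive with the recurrence at $i\ge\alpha$ does confine any failure of $w(i)=0\Rightarrow w(p+i)=0$ to indices $i<\alpha$ and to moves $x_*\in A$ with $x_*>i$, so that $i-x_*$ lies in the seed. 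This is genuine partial progress, and it is consistent with the paper's four-element counterexample $\{1,2,6,11\}$, which satisfies all the extension hypotheses yet has $\preper=1\le\alpha$.

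However, the argument stops exactly where the conjecture begins to have content. The final step --- showing that the forced seed values, the recurrences at $i$, $i-x_*$, and $p+i-x_*$, and the already-periodic tail ``should admit no simultaneous solution'' when $|A|=3$ --- is asserted, not carried out; you flag it yourself as ``the hard part.'' Nothing in the write-up so far actually uses $|A|\le 3$, yet that hypothesis is essential (as $\{1,2,6,11\}$ shows), so whatever closes the gap must be a substantive case analysis rather than bookkeeping. There is also a small inaccuracy in the sketch of that step: you claim the $k\ge 2$ extension constraints at $m_*=p+i-x_*$ are ``automatic by the pre-seed convention,'' but the index $i-x_*-x-(k-1)p$ need not drop below $-\alpha$ when $p$ is small (periods as short as $2$ occur for $3$-sets), so those constraints can still land inside the seed and must be tracked. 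As it stands, your proposal is a reduction of the conjecture to a finite-looking but unexecuted case analysis, not a proof; the statement remains open.
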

It suffices to check the case $k=1$. If $S=\emptyset$ this is precisely the converse of Proposition \ref{prop:invariant-extension}; otherwise the converse is not true.

The following conjecture is based on computer simulations of $A\in {[200]\choose 3}$. It provides a very limited characterization of the general $\{a,b,c\}$ case but provides an example of its complex behavior.

\begin{conjecture}\label{conj:abcperbc}
Suppose $A=\{a,b,c\}$ with $1<a<b<c$. Use the division algorithm to obtain the following variables:
\begin{align*}
b&=q\ a+r\\
c&=q_c(a+b)+r_c&
r_c&=q_a(2a)+r_a\\
c-a&=q_c'(a+b)+r_c'&
r_c'&=q_a'(2a)+r_a'
\end{align*}
Then $\per(A)=b+c$ and $\preper(A)=0$ if and only if one of the following holds:
\begin{enumerate}[(i)]
\item $q$ is even and $r_c'>0$, $r_a'\leq r$, and $2q_a'\leq q$, and if $2q_a'=q$ then $r_a'\leq 2r-a$.
\item $q$ is odd and $r\neq 0$, $r\leq r_a\leq a$, and if $q_a=0$ then $r_a<a$.
\item $q$ is odd and $r=0$, and $r_a\neq a$
\end{enumerate}
\end{conjecture}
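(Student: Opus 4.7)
The overall strategy is to apply Corollary~\ref{cor:translating_zerosabc}, which reduces the desired equivalence to the concrete assertion
\[
w^A(b+c-i)=1\quad\text{for every }i\in[1,a],
\]
and then to verify this by computing $w^A$ explicitly on $[0,b+c)$. Since the corollary gives only $\per(A)\mid(b+c)$, a separate ``non-collapse'' argument is needed to rule out proper divisors of $b+c$ as the period; this is captured by the exceptional clause $r_a\neq a$ in case (iii), which (when $q$ is odd and $r=0$, so that $\per(\{a,b\})=2a$) is exactly the condition that $c$ is \emph{not} an extension of $\{a,b\}$ in the sense of Proposition~\ref{prop:invariant-extension}: indeed, for $q$ odd and $r=0$, $c$ is an extension of $\{a,b\}$ iff $c\equiv a\pmod{2a}$, iff $r_a=a$, in which case $\per(A)=2a<b+c$.

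For $n<c$, the move $c$ is unavailable and the absence of a seed forces $w^A(n)=w^{\{a,b\}}(n)$, whose period structure is given by Theorem~\ref{thm:abstructure}: writing $b=qa+r$, within one period of length $a+b$ the zero set of $w^{\{a,b\}}$ is the union of the blocks $\bigcup_{k=0}^{\lfloor(q-1)/2\rfloor}[2ka,(2k+1)a)$, augmented by $[b-r,b)$ when $q$ is even, and collapsed to $[0,a)$ in the sub-case $q$ odd with $r=0$ (period $2a$). For $n\in[c,b+c)$ the recurrence
\[
w^A(n)=1-\min\{w^A(n-a),\,w^A(n-b),\,w^A(n-c)\}
\]
has predecessors $n-b,n-c\in[0,c)$, whose values coincide with $w^{\{a,b\}}$; but $n-a$ lies in $[c,b+c)$ precisely when $q\ge 2$, in which case we must unwind the recurrence once more, replacing the shift $c$ by $c-a$. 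This recursion is exactly the source of the primed variables $q_c',r_c',q_a',r_a'$ in the statement.

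Specialising to $n=b+c-i$ with $i\in[1,a]$ gives the following picture. First, $w^A(n-c)=w^{\{a,b\}}(b-i)$ equals $0$ for $i\in[1,r]$ when $q$ is even, for $i\in(r,a]$ when $q$ is odd with $r>0$, and for \emph{every} $i\in[1,a]$ when $q$ is odd with $r=0$. Hence in case (iii) each $w^A(b+c-i)=1$ for free, proving the corollary's condition modulo the non-collapse issue above. Next, $w^A(n-b)=w^{\{a,b\}}(c-i)$ is controlled by $(r_c-i)\bmod(a+b)$, and since the zero-blocks of $w^{\{a,b\}}$ are spaced by $2a$, the further decomposition $r_c=2aq_a+r_a$ translates ``$w^A(n-b)=0$'' into explicit inequalities on $r_a,q_a$; this is exactly how case (ii) arises, where the problematic indices $i\in[1,r]$ are handled by $w^A(n-a)=0$ or $w^A(n-b)=0$, unwinding to the clause $r\le r_a\le a$ together with the strict refinement $r_a<a$ when $q_a=0$. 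Case (i) requires the additional layer of recursion through $n-a$: applying the same recipe but shifted by $a$ replaces $r_c,q_a,r_a$ by their primed counterparts, producing the conditions $r_c'>0$, $r_a'\le r$, $2q_a'\le q$; the sharp boundary condition $r_a'\le 2r-a$ when $2q_a'=q$ arises because exactly at this boundary a \emph{second} nested recursion collision occurs. The converse direction---exhibiting a specific $i\in[1,a]$ at which $w^A(b+c-i)=0$ whenever a clause of (i)--(iii) fails---is the same computation read in reverse.

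The main obstacle is the bookkeeping in case (i): each shift by $a$ introduces a fresh layer of division-algorithm residues, and the $2q_a'=q$ boundary forces careful tracking of an alignment among $r,r_a'$ and $q$ that is easy to mis-count. A cleaner approach would be to encode the zero-blocks of $w^{\{a,b\}}$ as a subset of the cycle $\Z/(a+b)\Z$ and translate the entire characterization into one about how this subset intersects its shifts by $c$ and $c-a$, reducing the case analysis to a purely arithmetic question about unions of arithmetic progressions; this would also clarify why case (ii) avoids the primed variables while case (i) needs them, namely because the $q$-odd zero structure is symmetric enough that the $a$-shift and the $(a+b)$-shift interact trivially, whereas for $q$ even the shift by $a$ reverses zero- and one-blocks and so genuinely requires a second pass.
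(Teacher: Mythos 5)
This statement is an open conjecture in the paper, not a theorem: the authors state it on the basis of computer simulations over $A\in\binom{[200]}{3}$ and offer no proof, remarking only that Corollary~\ref{cor:translating_zerosabc} ``will likely help in proving'' it. Your proposal correctly identifies that corollary as the natural tool and your reduction to checking $w^A(b+c-i)=1$ for $i\in[1,a]$ is the route the authors themselves anticipate. But what you have written is a plan, not a proof. The decisive content of the conjecture is the exact translation of that finite check into the inequalities on $q,r,q_a,r_a,q_a',r_a'$, and you never carry it out: you describe how the primed variables \emph{ought} to arise from unwinding the recurrence through $n-a$, you explicitly defer the ``bookkeeping in case (i),'' and you close by proposing a different, cleaner approach you have not executed either. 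Asserting that the boundary clause $r_a'\le 2r-a$ ``arises because exactly at this boundary a second nested recursion collision occurs'' is a conjecture about the conjecture, not an argument. (A smaller inaccuracy: the predecessor $n-a=b+c-i-a$ lands in $[c,b+c)$ whenever $i\le b-a$, which already happens for some $i$ when $q=1$, not ``precisely when $q\ge 2$.'')

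There is also a structural gap you acknowledge but do not close. Corollary~\ref{cor:translating_zerosabc} characterizes when $\per(A)\mid(b+c)$ and $\preper(A)=0$, whereas the conjecture asserts $\per(A)=b+c$. You address the possible collapse to a proper divisor only in case (iii), by identifying $r_a=a$ with $c$ being an extension of $\{a,b\}$ when $\per(\{a,b\})=2a$; even there the identification is asserted rather than derived, and for cases (i) and (ii) you give no argument that the period cannot be a proper divisor of $b+c$ (nor, in the converse direction, that a failure of (i)--(iii) cannot coexist with $w^A(b+c-i)=1$ for all $i$ while the period silently drops to a divisor). Until both the full case-by-case verification and the non-collapse argument are written out, this remains a proof strategy for an open problem rather than a proof.
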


The following conjecture has been verified for all $A\in {[25]\choose 3}$, for all seeds $S$. Together with Conjecture \ref{conj:linearbound} these are a strengthening of a conjecture by Alth\"ofer and B\"ultermann in \cite[(i)]{superlinear}.
\begin{conjecture}\label{conj:quadraticbound}
If $A=\{a,b,c\}$ with $a<b<c$, and if  $\gcd(a,b,c)=1$, then for all seeds $S\in \{0,1\}^\alpha$, we have $\per(A,S)<c^2$.
\end{conjecture}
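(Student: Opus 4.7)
The plan is to attack the conjecture by reducing to a purely periodic setup and then using coprimality to block the parallel-decomposition mechanism that produces superpolynomial periods in the $\gcd > 1$ case of Theorem \ref{thm:superpolynomial}.

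First I would reduce to the case $\preper(A,S) = 0$. Since $\{w^{A,S}\}$ is eventually periodic, replacing $S$ by $\vec v(\preper(A,S))$ produces a sequence with the same period $p := \per(A,S)$ and zero preperiod. So it suffices to prove $p < c^2$ under the assumption that the sequence is purely periodic.

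Next I would translate the problem into a condition on the losing set $Z := \{n \bmod p : w^{A,S}(n) = 0\} \subseteq \Z/p\Z$. By the recurrence and Lemma \ref{lem:translating_zeros}, $Z$ must satisfy (i) $(Z-a) \cap Z = (Z-b) \cap Z = (Z-c) \cap Z = \emptyset$, and (ii) $(Z-a) \cup (Z-b) \cup (Z-c) \supseteq (\Z/p\Z) \setminus Z$. From the proof of Theorem \ref{thm:periodbound} we also have $|Z| \geq p/(c+1)$. The coprimality $\gcd(a,b,c)=1$ should prevent $Z$ from being a union of cosets of any nontrivial subgroup of $\Z/p\Z$, since $a,b,c$ generate $\Z/d\Z$ for every $d \mid p$; this is the precise feature that fails in the $\gcd > 1$ examples of Section \ref{sec:superpolynomial}.

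The third and crucial step is to show that coprimality forces $p$ to be small. I would try two complementary angles. The algebraic angle encodes $Z$ by its indicator polynomial $f_Z(x) \in \mathbb{F}_2[x]/(x^p-1)$ and studies how $1 + x^a + x^b + x^c$ relates $f_Z$ to the indicator of $Z^c$; bounding $p$ would then reduce to a claim about how finely this polynomial can factor in the cyclotomic ring, heuristically giving $p = O(c^2)$ when no nontrivial cyclic decomposition is available. The combinatorial angle mimics Theorem \ref{thm:superpolynomial} in reverse: if $p \geq c^2$, then the $p$ vectors $\vec v(0),\ldots,\vec v(p-1) \in \{0,1\}^c$ must repeat heavily, and I would try to extract from this repetition a partition of $Z$ into orbits that evolve independently, which by Proposition \ref{prop:linearity} (Multiplicative Linearity) would force a nontrivial common divisor of $a,b,c$, contradicting coprimality.

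The hard part will be quantifying the gap between ``no parallelism'' and the specific bound $c^2$. Ruling out merely exponential-in-$c$ periods (which provably exist when $\gcd > 1$) under the coprime assumption appears to require a careful analysis of how $1 + x^a + x^b + x^c$ factors modulo $x^p - 1$ over $\mathbb{F}_2$, together with a sharpening of the pigeonhole step in Theorem \ref{thm:periodbound} that accounts for the additional constraints imposed by condition (ii). Computational verification for $A \in \binom{[25]}{3}$, together with the empirical data in Figure \ref{fig:allperiods1bbp1} (whose maximal periods for $\{1,b,b+1\}$ appear to grow only linearly in $b$), suggests the bound $c^2$ has room to spare; but converting the parallelism-obstruction intuition into a rigorous quadratic bound looks to require structural tools beyond those developed here.
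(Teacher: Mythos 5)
This statement is an open conjecture in the paper: the authors offer no proof, only computational verification for all $A\in\binom{[25]}{3}$ over all seeds, so there is no ``paper proof'' to compare against. Your proposal is a research plan rather than a proof, and you are candid about that, but the gap is worth naming precisely because it sits exactly at the step that carries all the content. The reduction to $\preper=0$ and the translation into conditions on the losing set $Z\subseteq\Z/p\Z$ are fine (and the lower bound $|Z|\geq p/(c+1)$ does follow from the gap argument in Theorem~\ref{thm:periodbound}); but everything after that is heuristic. The combinatorial angle rests on the claim that heavy repetition among $\vec v(0),\ldots,\vec v(p-1)$ would let you partition $Z$ into independently evolving orbits and thereby extract a common divisor of $a,b,c$. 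Proposition~\ref{prop:linearity} is a one-way implication --- $\gcd>1$ \emph{produces} a parallel decomposition --- and nothing in the paper (or in your sketch) supplies a converse. Indeed the paper's own data undercuts the intuition: the eight exceptional coprime sets listed in the closing section (e.g.\ $\per(\{13,23,25\},S)=83>2\cdot 25$) show that coprimality does not cleanly block superlinear periods, so ``no parallelism'' cannot by itself be the mechanism that caps $p$ at $c^2$.

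The algebraic angle has a more specific defect: the recurrence $w(n)=1-\min\{w(n-a),w(n-b),w(n-c)\}$ is not $\mathbb{F}_2$-linear, and your condition (ii) is a set-covering statement ($\bigcup_{x\in A}(Z-x)\supseteq Z^c$), not a polynomial identity. So there is no equation of the form $f_Z\cdot(1+x^a+x^b+x^c)=f_{Z^c}$ in $\mathbb{F}_2[x]/(x^p-1)$ to which cyclotomic factorization could be applied; at best one gets an inequality of supports, and it is unclear how that constrains $p$ at all, let alone quadratically. Until one of these two angles is converted into an actual argument --- or replaced by something that explains both the $c^2$ bound and the existence of the coprime exceptional sets --- the conjecture remains open, and your proposal should be read as a (reasonable) statement of where the difficulty lies rather than as progress toward resolving it.
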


This would imply that the construction in Theorem \ref{thm:superpolynomial} can only occur if $A$ has a greatest common divisor. It would also provide the general bound that if $g=\gcd(a,b,c)$ and $\tilde c=c/g$, then for all seeds $S$, $\per(A,S)\leq \tilde c^{2g}$, which is $\bigO(e^{2c/e})$ in the worst case but ordinarily much less than $ (\min(a+b,c)+1)2^{c-3}$, the bound derived in Theorem \ref{thm:periodbound}.

From computer simulations of $A\in {[25]\choose 3}$ with $\gcd A=1$, we note that for all seeds $S$, if the period is super-linear, i.e. $\per(A,S)>2c$, then $(a+b)\mid c$, with the following eight exceptions. These sets have $\max(\mathcal P^A)>2\alpha$, with example seeds given.
\begin{center}
\begin{tabular}{lr}
$\per(\{11, 16, 20\},(01)^21^401^2)$&$=61$\\
$\per(\{3, 11, 21\},0^2101^20)$&$=61$\\
$\per(\{7, 17, 23\},(010)^201^2)$&$=73$\\
$\per(\{10, 21, 23\},0^2101^4)$&$=78$\\
$\per(\{5, 11, 24\},01^20^31^2010)$&$=65$\\
$\per(\{11, 16, 25\},01^2010^21^4)$&$=56$\\
$\per(\{16, 21, 25\}, 0^2(101^3)^201^2)$&$=61$\\
$\per(\{13, 23, 25\},0^21^2010^21^5)$&$=83$
\end{tabular}
\end{center}
A characterization of these sets may relate to the solution to Conjecture \ref{conj:linearbound} and/or \ref{conj:quadraticbound}.

\section*{Acknowledgement}
IM was supported by NKFIH grant K132696. The project is a continuation of the work done at the 2022 Spring semester at Budapest Semesters in Mathematics. Both authors thank to Mariam Wael Abu-Adas for participating in the very early phase of the research work at the BSM leading to the results presented in this paper.
Both authors would like to thank the BSM for running the program.

\newpage
\bibliography{main}{}
\bibliographystyle{plain}

\end{document}